\documentclass[10 pt]{article}        	
\usepackage{amsmath, amsthm, amssymb, amsfonts}	
\newtheorem{lemma}{Lemma}
\usepackage{thmtools}
\usepackage[linesnumbered,ruled,vlined]{algorithm2e}
\usepackage{graphicx}
\usepackage{setspace}
\usepackage{geometry}
\usepackage{float}
\usepackage{booktabs}
\usepackage{hyperref}
\usepackage{tikz}
\usetikzlibrary{decorations.markings, arrows.meta}
\usepackage{algpseudocode}
\usepackage[utf8]{inputenc}
\usepackage[english]{babel}
\usepackage{framed}
\usepackage[dvipsnames]{xcolor}
\usepackage{tcolorbox}
\usepackage{caption}
\usepackage{subfigure}
\newtheorem{theorem}{Theorem}

\usepackage{multirow}
\usepackage{booktabs}

\makeatother

\usepackage{babel}

\makeatletter
\newcommand{\eqn}[0]{\begin{array}{rcl}}
\newcommand{\eqnend}[0]{\end{array} }  	

\title{Direct Sampling Methods for Inverse Interface Problems}
\author{Kazufumi Ito\thanks{Department of Mathematics and Center for Research in Scientific Computation, North Carolina State University, Raleigh, North Carolina (kito@ncsu.edu).}
\and Tong Wu\thanks{Department of Mathematics, The Chinese University of Hong Kong, Shatin, N.T., Hong Kong (twu@math.cuhk.edu.hk).}
\and Jun Zou\thanks{Department of Mathematics, The Chinese University of Hong Kong, Shatin, N.T., Hong Kong. The work of J. Zou was substantially supported by the Hong Kong RGC General Research Fund (projects 14310324 and 14306623) and NSFC / Hong Kong RGC Joint Research Scheme 2022/23 (project N CUHK465/22). (zou@math.cuhk.edu.hk).}}
\date{}
\begin{document}
\maketitle
\begin{abstract}
This work investigates two types of inverse interface problems in scenarios where only very limited Cauchy data is available. These problems are associated with the Laplace equation featuring a Robin-type flux jump across an internal interface. The first problem focuses on reconstructing the location of cracks along a known interface using Cauchy data measured on the outer boundary. The second problem involves determining the location of an unknown interface based on Cauchy data from the outer boundary. To address these challenges, we adopt an efficient Direct Sampling Method (DSM) and introduce innovative enhancements to the boundary conditions in the reference system, thereby maximizing the utility of the available Cauchy data. Additionally, we propose a novel refinement to further improve the robustness of the DSM against noise. 

We provide a detailed exposition of the general principles underlying the DSM and systematically present its computational implementation steps. Through detailed Fourier analysis and computations, we illustrate the theoretical background of the DSM as well as the effectiveness of our refinement approach. A series of numerical experiments demonstrates that our method yields highly satisfactory results, even when processing incomplete and noisy Cauchy data on the outer boundary. We introduce quantitative metrics, such as Mean Localization Error (MLE) and Contrast-to-Noise Ratio (CNR), to rigorously evaluate the performance of our method. These findings underscore the exceptional effectiveness and broad applicability of the proposed approach.
\end{abstract}
\section{Introduction}
\subsection{Background of inverse interface problems and direct sampling methods}
Interface problems are ubiquitous in various scientific and engineering disciplines. For instance, when a physical domain is composed of two different materials or media, an interface naturally emerges between them, leading to an interface problem. In the context of Partial Differential Equations (PDEs), numerical solutions to interface problems often require partitioning the computational domain into distinct regions or subdomains. These regions may exhibit different material properties or physical conditions and are typically separated by interfaces or boundaries. For example, in electrical conduction problems, resistivity varies across different materials, resulting in irregularities such as discontinuous parameters, jump conditions, and singular source terms in the governing differential equations at the interfaces or boundaries. Such complexities pose significant challenges in solving interface problems.

A flux jump condition describes a discontinuous change in the flux (or flow) of a quantity across an interface. This phenomenon typically arises when there is a sharp transition or inhomogeneity in the physical properties at the interface, and is particularly relevant in problems involving composite materials, phase boundaries, or fractured media.

In this paper, we focus on inverse interface problems in elliptic equations. While addressing two physically distinct scenarios, we unify them mathematically as the problem of recovering the spatial support of a singular perturbation. The first part of our work addresses the identification of the flux jump location at a known interior interface with a flux boundary condition, where the interface represents a surface with cracks, defects, or damages. In the second part, we identify the location of an entirely unknown interface. Both problems are formulated and solved using limited Cauchy measurements on the outer boundary. These problems find applications in diverse fields, including electrical conductivity problems \cite{torquato1985effective}, multi-material compressible flow simulations \cite{kucharik2010comparative}, medical diffusion MRI
imaging \cite{o2004interface}, and geological surface reconstruction \cite{frank20073d}.

Interface problems have been extensively studied through both analytical and computational approaches. For forward problems involving discontinuous coefficients at fixed or moving interfaces, the immersed interface methods (IIM) \cite{lito2006immersed, li2003overview} and certain optimization techniques \cite{li1998fast} have emerged as  accurate and efficient approaches for problems defined on irregular domains. In inverse interface problems, classical approaches often rely on iterative optimization techniques such as the Newton-type methods \cite{zhang2013newton} and the level-set method \cite{kito2001level}. However, these methods face challenges in handling noisy data and require relatively accurate initial guesses, which are crucial for their convergence. Apart from these iterative approaches, the class of sampling methods is also powerful for various inverse problems, including linear sampling methods \cite{cakoni2003linear, arens2001linear} and orthogonality sampling methods \cite{potthast2010study, griesmaier2011multi}. These sampling methods define an index function on the domain of interest to indicate the inhomogeneous part, yielding satisfactory results. As direct methods, they offer significant computational efficiency and demonstrate promising potential for inverse problems.

We employ the Direct Sampling Method (DSM) to solve these inverse problems. Specifically, we compute the probing index $I(x)$ of the interface, defined through a weighted inner product between probing functions and boundary measurement data (given as Dirichlet-type measurements). The DSM is highly efficient and provides accurate solutions to our inverse problems. While the DSM has been applied to inverse medium problems \cite{chow2015direct, ito2012direct, ito2013direct, chow2014direct, chow2021direct, chow2021direct_simul, chow2018time, li2012direct, ito2025iterative}, our work presents a novel application for determining the interior interface condition. We also evaluate the index function $I(x), x\in\Gamma$ by the adjoint method, which results in a PDE-based, highly direct approach.

Another main contribution is the first quantitative proof, to the best of our knowledge, of the almost orthogonal property for a DSM kernel arising from an inverse interface problem. Earlier DSM papers use almost orthogonality to explain why a sampling index is large near the target and small away from it; related notions, such as mutually almost orthogonal probing functions, have also been developed \cite{chow2021direct_simul}. These arguments do not cover the kernel used here, because both Cauchy data are built into the reference problem through a Robin condition and the probing functions are rescaled in the Sobolev inner product. In the disk case, we derive the Fourier representation of the normalized kernel and prove explicit radial monotonicity estimates: the kernel is maximized at the probing point and decreases quantitatively as the radial variable moves away. This turns the almost orthogonal property from a numerical observation into a proved localization result for the proposed interface DSM.

In this work, we develop a direct sampling method to solve a two-dimensional inverse interface problem where the flux jump across the interface is proportional to the flow. Specifically, we consider the following interface Laplace problem in a domain $\Omega\subset\mathbb{R}^{2}$:
\begin{equation}
     \left\{
\begin{aligned}
-\Delta u=0 \quad \text{in} \quad &\Omega\setminus\Gamma, \\
\left[\frac{\partial u}{\partial \nu}\right]=\gamma u\quad \text{on} \quad &\Gamma,\\
[u] = 0 \quad \text{on} \quad &\Gamma,\\
\end{aligned}
\right. \label{con:model_problem}
 \end{equation}
 where $\Gamma\subset\Omega$ is a one-dimensional interface. The jump condition is defined as
\begin{equation}
\left[ \frac{\partial u(\hat{x})}{\partial \nu} \right] := \left( \lim_{x \to \hat{x}, x \in \Omega_1} \nabla u(x) - \lim_{x \to \hat{x}, x \in \Omega_2} \nabla u(x) \right) \cdot \nu \text{ for } \hat{x} \in \Gamma,
\end{equation}
where $\nu$ denotes the unit normal vector on $\Gamma$. A pair of Cauchy data $(f, g)$ is measured on the outer boundary $\partial\Omega$ as follows:
 \begin{equation}
     f = u\bigg|_{L}\quad \text{and}\quad g = \frac{\partial u}{\partial n}\bigg|_{\partial\Omega},\label{Cauchy_def}
 \end{equation}
where $n$ represents the outward unit normal vector on $\partial\Omega$, and $L\subset\partial\Omega$ is the portion of the outer boundary where the measurement data $f$ is available.
 
 Our work addresses two distinct inverse problems. In Problem I, we consider a known interface $\Gamma$ where the jump coefficient $\gamma(x)$ consists of a known background value $\gamma_{0}$ and a sparse perturbation. Mathematically, this sparsity is characterized by $D = \text{supp}(\gamma-\gamma_{0})$ being a small subset of $\Gamma$. 
 The objective is to construct
a probing index whose large values indicate the defect support
\(D=\operatorname{supp}(\gamma-\gamma_0)\subset\Gamma\) from a single pair of Cauchy data \((f,g)\).
 In Problem II, the interface $\Gamma$ and the jump coefficient $\gamma$ are both unknown, and we aim to reconstruct
a probing index that highlights the location of the unknown interface \(\Gamma\) from a single pair of Cauchy data \((f,g)\). Since the response of
a defect may depend on the chosen boundary excitation, a single pair of data
does not necessarily illuminate all connected components of \(D\). When
several Cauchy pairs are available, the corresponding single pair probing indices can
be combined to obtain a more complete reconstruction. The analysis and results for the general case that $[u]\neq 0$ will be addressed in future work.

The remainder of the paper is structured as follows. In Section 1.2, we state a generic well-posedness result for the forward interface problem, with the proof deferred to the Appendix. Section 2 introduces the general philosophy of the DSM and details the construction of probing functions for both problems. In this section, we propose an innovative approach by utilizing the two Cauchy data together, reformulated as a Robin boundary condition, to solve the forward reference problem $u_0$, while retaining the Dirichlet boundary condition for measurement data fitting. It significantly enhances the robustness of the results in the presence of noise. Section 3 derives the Fourier representation of the DSM kernel and states its quantitative almost orthogonal property, whose rigorous radial monotonicity proof is provided in the Appendix. Furthermore, it employs a frequency-domain analysis to establish a systematic criterion for selecting the Robin parameter $\alpha$. In Section 4, we present a novel refinement approach to sharpen the performance of the DSM. Section 5 showcases our numerical results. Finally, Section 6 summarizes the main findings and discusses potential future research directions.
 \subsection{Well-posedness of the interface problem}
 In this section, we state the well-posedness result for the forward problem that will be used throughout the paper. It is classical that the problem is uniquely solvable if $\gamma<0$ almost everywhere on $\Gamma$, due to the coercivity of the associated bilinear form. The following lemma records a generic solvability statement for the more delicate case in which $\gamma$ may take positive values.
\begin{figure}[htbp]
\centering
\begin{minipage}[t]{0.47\linewidth}
\centering
\begin{tikzpicture}[scale=0.6]
    \draw[thick] (0,0) ellipse [x radius=3cm, y radius=2cm];
    \node[font=\small, outer sep=2pt] at (80:3.2cm and 2.2cm) {$\partial\Omega$};
    
    \draw[thick] (0,0) ellipse [x radius=1.5cm, y radius=1cm];
    \node[font=\small, outer sep=2pt] at (100:1.7cm and 1.2cm) {$\Gamma$};
    
    \node at (0,0) {$\Omega_1$};
    \node at (2,0.5) {$\Omega_2$};
\end{tikzpicture}
\caption{Domain partition diagram}
\label{fig:domain}
\end{minipage}\hfill
\begin{minipage}[t]{0.47\linewidth}
\centering
\begin{tikzpicture}[scale=0.65,
    normal/.style={-{Stealth[length=3mm]}, thick, red}]
    \def\innerRx{1.5}
    \def\innerRy{1}
    \def\outerRx{3}
    \def\outerRy{2}

    \draw[thick] (180:\outerRx cm and \outerRy cm) arc (180:360:\outerRx cm and \outerRy cm)
        node[pos=0.15, left] {$\partial\Omega$};
    \draw[thick] (0:\outerRx cm and \outerRy cm) arc (0:180:\outerRx cm and \outerRy cm);

    \draw[thick] (180:\innerRx cm and \innerRy cm) arc (180:360:\innerRx cm and \innerRy cm)
        node[pos=0.5, below] {$\Gamma$};
    \draw[thick, dashed] (0:\innerRx cm and \innerRy cm) arc (0:180:\innerRx cm and \innerRy cm)
        node[pos=0.2, right] {$\widetilde{\Gamma}$};

    \draw[normal] (30:\outerRx cm and \outerRy cm) -- ++(30:0.8cm)
        node[above] {$\mathbf{n}$};
    \draw[normal] (210:\innerRx cm and \innerRy cm) -- ++(210:0.8cm)
        node[below] {$\boldsymbol{\nu}$};

    \node at (0,0) {$\Omega_1$};
    \node at (2,1) {$\Omega_2$};
\end{tikzpicture}
\caption{$\widetilde{\Gamma}$ is a closed surface extension of $\Gamma$.}
\label{fig:extend_surface}
\end{minipage}
\end{figure}
\begin{lemma} \label{lemma-wellpo}
    Let \(\Omega\subset\mathbb R^2\) be a bounded connected Lipschitz domain, and let \(\Gamma \Subset \Omega\) be a closed Lipschitz interface of positive
length partitioning \(\Omega\) into \(\Omega_{1} \sqcup \Omega_{2}\) (see Figure \ref{fig:domain}). Assume that the boundary data $g \in L^2(\partial\Omega)$ and the jump coefficient is parameterized as $\gamma(x) = \lambda \rho(x)$, where \(\rho\in L^\infty(\Gamma;\mathbb R)\) is a given normalized profile function with $\|\rho\|_{L^\infty(\Gamma)} = 1$ and $\lambda \in \mathbb{R}$ is a scaling parameter. Then, there exists a discrete set $\Sigma \subset \mathbb{R}$ with no finite accumulation points, such that for any $\lambda \in \mathbb{R} \setminus \Sigma$, the interface problem \eqref{con:model_problem}, supplemented with the Neumann boundary condition \(\partial u/\partial n=g\) on \(\partial\Omega\), admits a unique weak solution \(u\in H^1(\Omega)\). More precisely, the weak solution is understood as a function
\(u\in H^1(\Omega)\) satisfying
\begin{equation}
    \int_\Omega \nabla u\cdot\nabla v\,dx
    -
    \lambda\int_\Gamma \rho u v\,ds
    =
    \int_{\partial\Omega} g v\,ds,
    \qquad \forall v\in H^1(\Omega).
    \label{eq:weak-forward-interface}
\end{equation}
\end{lemma}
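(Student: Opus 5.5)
The plan is to recast the weak formulation \eqref{eq:weak-forward-interface} as a Fredholm operator equation that depends affinely on $\lambda$, and then apply the analytic Fredholm theorem.

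\textbf{Step 1: reformulation.} Work in the complexification $H^1(\Omega;\mathbb C)$. The Neumann form $\int_\Omega \nabla u\cdot\nabla \bar v\,dx$ alone is not coercive, because constants lie in its kernel. We therefore add and subtract a boundary term and introduce
\[
a_0(u,v)=\int_\Omega \nabla u\cdot\nabla \bar v\,dx+\int_{\partial\Omega} u\bar v\,ds .
\]
This form is coercive on $H^1(\Omega)$ by the Friedrichs-type inequality $\|u\|_{L^2(\Omega)}^2\le C(\|\nabla u\|^2_{L^2(\Omega)}+\|u\|^2_{L^2(\partial\Omega)})$. That inequality holds because $\Omega$ is connected and Lipschitz; it follows from a standard compactness argument.

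By the Riesz representation theorem with respect to $a_0$, define bounded operators $K_0,K_1$ on $H^1(\Omega)$ and an element $G\in H^1(\Omega)$ by
\[
a_0(K_0u,v)=\int_{\partial\Omega}u\bar v\,ds,\qquad a_0(K_1u,v)=\int_\Gamma \rho u\bar v\,ds,\qquad a_0(G,v)=\int_{\partial\Omega} g\bar v\,ds .
\]
Problem \eqref{eq:weak-forward-interface} is then equivalent to
\[
T(\lambda)u:=(I-K_0-\lambda K_1)u=G .
\]

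\textbf{Step 2: compactness.} The trace operators $H^1(\Omega)\to L^2(\partial\Omega)$ and $H^1(\Omega)\to L^2(\Gamma)$ are compact, since both $\partial\Omega$ and $\Gamma$ are Lipschitz curves. Because $\rho\in L^\infty(\Gamma)$, the operators $K_0$ and $K_1$ are compact. Consequently $T(\lambda)$ is Fredholm of index zero for every $\lambda\in\mathbb C$, and it depends analytically (indeed affinely) on $\lambda$.

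By the analytic Fredholm theorem, exactly one of two alternatives holds: either $T(\lambda)$ is invertible for no $\lambda$, or it is invertible for all $\lambda$ outside a discrete set $\Sigma_{\mathbb C}\subset\mathbb C$ with no finite accumulation point. We then set $\Sigma=\Sigma_{\mathbb C}\cap\mathbb R$, which is discrete in $\mathbb R$ with no finite accumulation point.

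\textbf{Step 3: ruling out the first alternative.} This is the key step. It suffices to find a single $\lambda$ at which $T(\lambda)$ is injective, since index zero then gives invertibility. Take $\lambda=it$ with $t\in\mathbb R\setminus\{0\}$, and suppose $T(it)u=0$.

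Testing with $v=u$ gives
\[
\int_\Omega|\nabla u|^2\,dx-it\int_\Gamma\rho|u|^2\,ds=0 ,
\]
because $\rho$ is real. Taking the real part yields $\nabla u=0$, so $u\equiv c$ is constant on the connected set $\Omega$.

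Substituting back, the homogeneous equation reduces to $-itc\int_\Gamma\rho\bar v\,ds=0$ for all $v\in H^1(\Omega)$. Since $\|\rho\|_{L^\infty(\Gamma)}=1$, the function $\rho$ is not zero in $L^2(\Gamma)$. Traces of $H^1(\Omega)$ functions are dense in $L^2(\Gamma)$, so we can choose $v$ with $\int_\Gamma\rho\bar v\,ds\neq0$. Hence $c=0$, and $T(it)$ is injective.

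Note that $\lambda=0$ necessarily belongs to $\Sigma$, because constants lie in the kernel of $T(0)$. This is consistent with the classical compatibility issue for the pure Neumann problem.

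\textbf{Step 4: real solutions.} For real $\lambda\notin\Sigma$ and real $g$, the operator $T(\lambda)$ commutes with complex conjugation. Therefore the conjugate $\bar u$ of the unique solution $u$ also solves the problem, and uniqueness gives $u=\bar u$. So $u$ is the unique real weak solution in $H^1(\Omega)$.

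\textbf{Main obstacle.} I expect the main difficulty to be Step 3, namely excluding the degenerate alternative of the analytic Fredholm theorem. The device of taking an imaginary $\lambda$ is what makes this work, but it requires some care. One needs the density of traces in $L^2(\Gamma)$ and the fact that $\rho\neq0$. The compact-trace facts for a closed Lipschitz interior curve are routine by comparison.
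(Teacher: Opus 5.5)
Your proposal is correct and follows the paper's proof essentially step by step. Both arguments use a stabilized coercive form, define operators by Riesz representation that are compact via the compact trace, apply the analytic Fredholm theorem on the complexified space, and prove injectivity at an imaginary value of $\lambda$ by taking real parts and using the density of traces in $L^2(\Gamma)$. The only differences are cosmetic: the paper stabilizes with $\int_\Gamma wv\,ds$ rather than your $\int_{\partial\Omega}u\bar v\,ds$, and it leaves implicit the passage from complex back to real solutions that you make explicit in Step 4.
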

The proof, based on compactness and the analytic Fredholm theorem, is deferred to Appendix \ref{app:lemma-wellpo-proof}.
\section{Direct sampling methods for inverse interface problems}
This section presents the Direct Sampling Method (DSM) for solving the two inverse interface problems introduced in Section 1. Before introducing the DSM construction, we establish a unified mathematical framework that characterizes the essence of their inverse identification. The key observation is that both problems can be viewed as the detection of a singular perturbation relative to a properly chosen reference or background model. This viewpoint separates the known part of the physics, encoded in a background operator, from the unknown object to be recovered, encoded in the support of a perturbation term. More precisely, we denote the corresponding background operator by $\mathcal{L}_0$ and write, at a formal distributional level,
$$(\mathcal{L}_0 + \mathcal{S})u = 0 \quad \text{in } \mathcal{D}'(\Omega),
$$
where $\mathcal{S}$ represents the unknown singular perturbation relative to the chosen background operator and is supported on a subset $\Sigma \subset \Omega$. The primary objective is to determine the spatial support $\Sigma = \text{supp}(\mathcal{S})$ using a limited set of Cauchy data pairs $(f, g)$. In this way, the known-interface coefficient problem and the unknown-interface geometry problem are placed in the same form: the DSM only has to identify where the discrepancy between the true model and the reference model is located. Within this unified framework, we consider the following two cases.
\begin{enumerate}
\item \textbf{Problem I (Local Anomaly Detection):} In the interface model \eqref{con:model_problem}, the interface $\Gamma\subset\Omega$ is known a priori and may be non-closed, while the Robin jump coefficient $\gamma\in L^\infty(\Gamma)$ is unknown. We assume that $\gamma$ has a sparse deviation from a known background value $\gamma_0$. The background operator $\mathcal{L}_0$ includes the known background jump condition $[\partial_\nu u]=\gamma_0u$ on $\Gamma$, and the perturbation operator $\mathcal{S}$ represents only the deviation of $\gamma$ from $\gamma_0$, acting distributionally as $\mathcal{S}u = -(\gamma - \gamma_0)u\delta_{\Gamma}$, where $\delta_{\Gamma}$ is the Dirac measure on $\Gamma$. Thus the inverse task is to locate the defect set $\Sigma = D = \text{supp}(\gamma - \gamma_0) \subseteq \Gamma$, rather than to reconstruct the whole known carrier $\Gamma$.
\item \textbf{Problem II (Full Topology Reconstruction):} In the same interface model \eqref{con:model_problem}, the interface $\Gamma$ may be non-closed and is itself unknown, as is the coefficient $\gamma\in L^\infty(\Gamma)$. In this case the background operator is the interface-free Laplacian, $\mathcal{L}_0=-\Delta$, and $\mathcal{S}$ represents the entire interface jump condition, acting as $\mathcal{S}u = -\gamma u \delta_{\Gamma}$. The target support is therefore the unknown interface itself, $\Sigma = \Gamma \subset \Omega$, and the sampling points must be searched over the interior of the domain.
\end{enumerate}
When \(\Gamma\) is not closed, we use the following convention. Choose a closed Lipschitz extension \(\widetilde\Gamma\Subset\Omega\) containing \(\Gamma\) and partitioning \(\Omega\), and extend the jump coefficient by \(\widetilde\gamma=\gamma\) on \(\Gamma\) and \(\widetilde\gamma=0\) on \(\widetilde\Gamma\setminus\Gamma\) (and similarly for the background coefficient when used). This zero extension leaves the weak formulation unchanged, since the added part contributes no interface term; equivalently, the artificial part carries zero flux jump. Therefore, the closed-interface well-posedness result in Lemma \ref{lemma-wellpo} applies to this extended formulation.
Under this formulation, the DSM constructs an index function that is expected
to take relatively large values near \(\Sigma\) by exploiting the discrepancy between the observed data and a reference ``unperturbed'' state $u_0$. The specific implementations are detailed in the following subsections.
\subsection{The DSM Approach}
The Direct Sampling Method (DSM) is a mathematical approach designed to efficiently and directly recover the location of inhomogeneities in a given equation. These inhomogeneities may appear as discontinuities in a coefficient or source term, or as the presence of an interface within the domain. The method operates by considering a suitably chosen reference or background model of the original equation, in which the unknown singular
perturbation is removed. Because the problem is overdetermined, the reference equation cannot simultaneously satisfy both sets of Cauchy boundary data. Therefore, the DSM enforces the reference equation to satisfy one prescribed boundary condition, which allows us to capture the discrepancy between the second boundary measurement and the solution to the homogeneous equation.

To quantify this discrepancy, we employ Green's representation theorem to express the difference between the observed data and the homogeneous solution in terms of Green's functions. Subsequently, we construct a family of probing functions that is almost orthogonal to the family of Green's functions under a specific inner product with proper scaling. The index function is then defined as the scaled inner product of the probing functions with the boundary data discrepancy. This almost orthogonal property ensures that the
index function attains significantly larger values near the inhomogeneities, effectively identifying
their locations.

We now adapt this DSM framework to our specific interface problems. Recall that
the boundary flux \(g\) is prescribed on the whole outer boundary
\(\partial\Omega\), while the Dirichlet measurement \(f\) is available only on
a subset \(L\subset\partial\Omega\). We define a reference state \(u_0\) through a mixed
Neumann--Robin background system. On \(\partial\Omega\setminus L\), where there is no Dirichlet measurement, we
impose the measured flux \(g\). On the measurement part \(L\), we combine the
two Cauchy data through the Robin condition
\[
    \frac{\partial u_0}{\partial n}+\alpha u_0=g+\alpha f,
    \qquad \alpha>0.
\]
The parameter \(\alpha\) controls the relative weight assigned to the
Dirichlet measurement. In particular, larger values of \(\alpha\) enforce the
trace of \(u_0\) to be closer to the measured data \(f\), while smaller values
place more emphasis on the Neumann datum.

For Problem I, the known background interface condition is retained in the
reference system. Thus \(u_0\) is defined as the solution of \eqref{step1_ref_sys} below.
For Problem II, where the interface itself is unknown, the reference state is
taken to be the solution of the interface-free background problem \eqref{step2_ref_sys} below.

\begin{minipage}{0.48\textwidth}
\begin{equation}
\left\{
\begin{aligned}
-\Delta u_{0}=0 \quad &\text{in } \Omega\setminus\Gamma, \\
\left[\frac{\partial u_{0}}{\partial \nu}\right]=\gamma_{0} u_{0}
    \quad &\text{on } \Gamma,\\
[u_{0}] = 0 \quad &\text{on } \Gamma,\\
\frac{\partial u_{0}}{\partial n}=g
    \quad &\text{on } \partial\Omega\setminus L,\\
\frac{\partial u_{0}}{\partial n}+\alpha u_{0}=g+\alpha f
    \quad &\text{on } L.
\end{aligned}
\right.
\label{step1_ref_sys}
\end{equation}
\end{minipage}%
\hfill
\begin{minipage}{0.48\textwidth}
\begin{equation}
\left\{
\begin{aligned}
-\Delta u_{0}=0 \quad &\text{in } \Omega, \\
\frac{\partial u_{0}}{\partial n}=g
    \quad &\text{on } \partial\Omega\setminus L,\\
\frac{\partial u_{0}}{\partial n}+\alpha u_{0}=g+\alpha f
    \quad &\text{on } L.
\end{aligned}
\right.
\label{step2_ref_sys}
\end{equation}
\end{minipage}

Throughout the paper, we assume that the corresponding reference systems are
uniquely solvable for the chosen parameters. In the interface case, this
amounts to excluding the same type of exceptional resonant values as in
Lemma \ref{lemma-wellpo}.

After obtaining the reference solution, we extract its trace $f_0 = u_0|_L$ and exclusively use the Dirichlet discrepancy $f - f_0$ for data fitting. We construct a family of probing functions $\eta_{x}:L\rightarrow\mathbb{R}$ such that the inner product between $f-f_{0}$ and $\eta_{x}$ serves as an effective probing index. For Problem I, this index locates the defect $D = \text{supp}(\gamma - \gamma_0) \subset \Gamma$. For Problem II, it identifies the entirely unknown interface $\Gamma$. Specifically, we expect the index function
\begin{equation}
    I(x)=\frac{(f-f_{0},\eta_{x})}{S(x)}
\end{equation}
to correctly locate the inhomogeneities for both problems. Here, $(\cdot, \cdot)$ denotes a specific inner product and $S(x)$ is a scaling function, both of which will be appropriately chosen in the subsequent sections along with the probing functions $\eta_x$.

\subsection{Representations of boundary measurement}
In this subsection, we derive a representation formula for the boundary data discrepancy $f-f_0$ via the Green's function under noise-free assumptions. Let $G_{x}$ be the Green's function satisfying the following boundary value problem:
\begin{equation}
    \left\{
\begin{aligned}
-\Delta G_{x}=\delta_{x} \quad in \quad &\Omega,\\
\frac{\partial G_{x}}{\partial n} = 0  \quad on \quad & \partial\Omega\setminus L,\\
\frac{\partial G_{x}}{\partial n}+\alpha G_{x}=0 \quad on \quad & L.\\
\end{aligned}
\right. \label{Gx-def}
\end{equation}
It is well known that the Green's function $G_x$ possesses the symmetric property $G_{x}(y) = G_{y}(x)$ for all $x, y\in\Omega$.\\


To apply Green's second identity, we extend the interface $\Gamma$ to be a closed surface $\widetilde{\Gamma}\subset\Omega$ which partitions the domain such that $\Omega = \Omega_1 \cup \Omega_2$ and $\Gamma \subseteq \partial\Omega_1$ (as illustrated in Figure \ref{fig:extend_surface}). Define $\widetilde{\gamma}: \widetilde{\Gamma}\to\mathbb{R}$ by $\widetilde{\gamma}(x) = \gamma(x)$ on $\Gamma$, and $\widetilde{\gamma}(x) = 0$ otherwise. 
By applying Green's second identity on the partitioned subdomains $\Omega_1$ and $\Omega_2$, we have for $x\in\text{int}(\Omega_{1})\cup\text{int}(\Omega_{2})$,
$$
\begin{aligned}
(u-u_{0})(x) &= \int_{\Omega} (G_x \Delta(u-u_0) - (u-u_0)\Delta G_x) dy \\
&= \int_{\Omega_1}(G_x \Delta(u-u_0) - (u-u_0)\Delta G_x) dy + \int_{\Omega_2}(G_x \Delta(u-u_0) - (u-u_0)\Delta G_x) dy\\
&= \int_{\Gamma} \left( G_x \left[ \frac{\partial (u-u_0)}{\partial \nu} \right] - (u-u_0) \left[ \frac{\partial G_x}{\partial \nu} \right] \right) ds_y + \int_{\partial\Omega} \left( G_{x}\frac{\partial (u-u_0)}{\partial n}-(u-u_{0})\frac{\partial G_{x}}{\partial n} \right) ds_y.
\end{aligned}
$$
Since $u$, $u_0$, and the Green's function $G_x$ are continuous across the interface $\Gamma$, the jump term $\left[ \frac{\partial G_x}{\partial \nu} \right]$ vanishes. Utilizing the transmission conditions $\left[ \frac{\partial u}{\partial \nu} \right] = \gamma u$ and $\left[ \frac{\partial u_0}{\partial \nu} \right] = \gamma_0 u_0$, the integral over $\Gamma$ simplifies directly to the flux jump discrepancy, yielding:
$$(u-u_{0})(x) = \int_{\Gamma}(\gamma u-\gamma_{0}u_{0})G_{x} ds_y + \int_{\partial\Omega} \left( G_{x}\frac{\partial (u-u_0)}{\partial n}-(u-u_{0})\frac{\partial G_{x}}{\partial n} \right) ds_y.
$$
We evaluate the boundary integral over $\partial\Omega$ by splitting it into $\partial\Omega \setminus L$ and $L$. On $\partial\Omega \setminus L$, both $u$ and $u_0$ satisfy the Neumann condition $\frac{\partial u}{\partial n} = \frac{\partial u_0}{\partial n} = g$, which causes the integral on this portion to vanish. On the measurement boundary $L$, by substituting $\frac{\partial u}{\partial n} = g$, $u = f$, and $\frac{\partial G_x}{\partial n} = -\alpha G_x$, we obtain:
$$
\int_{L} \left( G_{x} \left( g - \frac{\partial u_0}{\partial n} \right) - (f-u_{0})(-\alpha G_{x}) \right) ds_y = \int_{L} G_{x} \left( g - \frac{\partial u_{0}}{\partial n} + \alpha(f - u_{0}) \right) ds_y.
$$
From the Robin boundary condition \eqref{step1_ref_sys} for the reference system on $L$, we have $\frac{\partial u_0}{\partial n} + \alpha u_0 = g + \alpha f$, which implies $g - \frac{\partial u_0}{\partial n} = -\alpha(f - u_0)$. Substituting this relationship into the equation above, the terms inside the parenthesis exactly cancel out. Thus, the entire boundary integral over $\partial\Omega$ is zero, and we arrive at:
$$
(u-u_{0})(x) = \int_{\Gamma}(\gamma(y) u(y)-\gamma_{0}u_{0}(y))G_{x}(y)ds_y.
$$
By the symmetry of Green's function $G_{x}$, we have for $x\in\text{int}(\Omega_{1})\cup\text{int}(\Omega_{2})$,
\begin{equation}
    (u-u_{0})(x) = \int_{\Gamma}(\gamma(y) u(y)-\gamma_{0}u_{0}(y))G_{y}(x)ds_y.
\end{equation}
Therefore, by the continuity of $u-u_{0}$, taking the limit as $x$ approaches the boundary $\partial\Omega$ yields the representation for the discrepancy data:
\begin{equation}
    (f-f_{0})(x) = \int_{\Gamma}(\gamma(y) u(y)-\gamma_{0}u_{0}(y))G_{y}(x)ds_y. \label{step1repre}
\end{equation}
For later use, we denote the effective density in the above representation by
\begin{equation}
    J_I(y):=\gamma(y)u(y)-\gamma_0(y)u_0(y).
    \label{eq:effective-density-I}
\end{equation}
Since \(\gamma=\gamma_0\) on \(\Gamma\setminus D\), this density can be
decomposed as
\begin{equation}
    J_I
    =
    (\gamma-\gamma_0)u+\gamma_0(u-u_0).
    \label{eq:effective-density-decomposition-I}
\end{equation}

Similarly, for Problem II, by applying the exact same procedure and noting that the reference system employs an identical boundary condition matching strategy, the boundary integrals vanish identically. For $x \in \text{int}(\Omega_1) \cup \text{int}(\Omega_2)$, we directly obtain:
\begin{equation}
    (u-u_{0})(x) = \int_{\Gamma}\gamma(y) u(y)G_{y}(x)ds_y.
\end{equation}
Again, by the continuity of $u - u_0$, we obtain the representation formula for $x \in \partial\Omega$:
\begin{equation}
    (f-f_{0})(x) = \int_{\Gamma}\gamma(y) u(y)G_{y}(x)ds_y. \label{step2repre}
\end{equation}
At this stage, we have derived the representation formulae for the boundary data discrepancy $f-f_0$ in both Problem I (\ref{step1repre}) and Problem II (\ref{step2repre}).

\subsection{Choices of inner products and probing functions}
This subsection introduces the specific inner products and probing functions, which are crucial for the successful implementation of the DSM. Following the notation in \cite{ito2025iterative}, let $\nabla_{\Gamma}$ denote the surface gradient operator, and let $-\Delta_{\Gamma}$ denote the Laplace-Beltrami operator (surface Laplacian) on a given curve. We define the inner product $(\cdot,\cdot)$ as the $H^{s}$ semi-inner product. For a smooth curve $L$ and $s\in\mathbb{N}$, this $H^{s}$ semi-inner product is expressed as:
\begin{equation}
(\phi, \psi)_s = \int_{L} (-\Delta_{\Gamma})^s \phi \cdot \psi d\ell, \label{semi-inner-def}
\end{equation}
where $d\ell$ denotes the arc length element. The subscript $s$ in (\ref{semi-inner-def}) denotes the regularity order of the Sobolev space $H^{s}(L)$. We modify the family of probing functions in \cite{chow2014direct} slightly to suit our boundary condition matching strategy. For a sampling point $x \in \text{int}(\Omega)$, let $\omega_x$ solve the following mixed boundary value problem:
\begin{equation}
    \left\{
\begin{aligned}
-\Delta \omega_{x}=\delta_{x} \quad in \quad &\Omega,\\
\frac{\partial \omega_{x}}{\partial n} = 0\quad on \quad & \partial\Omega\setminus L,\\
\omega_{x}=0 \quad on \quad & L,\\
\end{aligned}
\right. \label{probe-def}
\end{equation}
where $\delta_x$ is the Dirac delta distribution centered at $x$. Although Problem I and II represent distinct inverse problems, they share a fundamental mathematical structure as both involve recovering the spatial location of inhomogeneities within the Laplace equation framework. This unified characterization motivates our adoption of a common probing function approach. To maintain notational clarity, we employ identical symbols for the probing formulations in both problems. We define the probing function $\eta_x$ and the corresponding index function $I_s$ as:
\begin{equation}
\eta_x(y) = \frac{\partial \omega_x}{\partial n}(y) \quad \text{for } y \in L, \quad \text{and} \quad I_s(x) = \frac{(f - f_0, \eta_x)_s}{S(x)}.
\label{index-def}
\end{equation}
The spatial domain of interest differs between the two problems. For Problem I (defect detection), the probing is performed over the known interface, meaning $x \in \Gamma$. For Problem II (full interface reconstruction), the search domain extends to the entire interior region, meaning $x \in \text{int}(\Omega)$. The scaling function $S(x)$ and the measurement boundary $L \subset \partial\Omega$ remain problem-independent. This unified formulation effectively concentrates the sensitivity of the index function on subdomains containing the target inhomogeneities: Either the support $D = \text{supp}(\gamma - \gamma_0)$ in Problem I, or the unknown interface $\Gamma$ in Problem II.
\subsection{Kernels of the DSM}
This subsection introduces the kernel functions, which constitute the theoretical core of the DSM. Based on the representation formulae (\ref{step1repre}) and (\ref{step2repre}), the scattered potential $f - f_0$ can be approximated numerically via a quadrature rule as:
$$(f-f_{0})(x) \approx  \sum_{k} a_{k}G_{x_{k}}(x), \quad x \in \partial\Omega,
$$
where $x_k \in \Gamma$ are the discrete sampling points on the interface, and $a_k$ denote the corresponding quadrature weights that represent the localized intensity of the flux jump discrepancy. Consequently, the probing index $I_s(x)$ can be approximated as:
$$I_{s}(x) \approx \frac{\sum_{k} a_{k}(G_{x_{k}}, \eta_{x})_{s}}{S(x)}.
$$

The probing index $I_s(x)$ is expected to attain significantly larger values when the sampling point $x$ approaches the inhomogeneities and remains small when $x$ is distant from them. For Problem I, the inhomogeneities correspond to the defect support $D = \text{supp}(\gamma - \gamma_0) \subset \Gamma$, while for Problem II, they represent the unknown interface $\Gamma \subset \Omega$.


The interpretation of the quadrature weights depends on the problem under
consideration. For Problem I, the weights approximate the effective density
\(J_I\) in \eqref{eq:effective-density-I}; more precisely,
\[
    a_k \approx J_I(x_k)\Delta s_k,
    \qquad x_k\in\Gamma,
\]
where \(\Delta s_k\) denotes the local quadrature weight. By \eqref{eq:effective-density-decomposition-I}, this
density has two parts. The term \((\gamma-\gamma_0)u\) is the direct
contribution of the defect and is supported on
\(D=\operatorname{supp}(\gamma-\gamma_0)\). The second term
\(\gamma_0(u-u_0)\) is caused by the difference between the true solution and
the reference solution. Since a local defect may change the solution along the
whole interface, this term is not necessarily supported only on \(D\). Therefore, the DSM index for Problem I should be understood as detecting the
parts of \(D\) that produce a sufficiently strong signal in the effective
density \(J_I\). For a given boundary flux \(g\), some components of \(D\) may
generate strong peaks in the index, while others may be weak or partially
masked by the background term \(\gamma_0(u-u_0)\).

For Problem II, the corresponding effective density is
\[
    J_{II}(y):=\gamma(y)u(y),
\]
which is supported on the unknown interface \(\Gamma\). Thus, provided that
\(J_{II}\) does not vanish on the relevant interface component for the chosen
boundary excitation, the kernel localization property makes the probing index
large near that component.

This observation also explains the use of multiple boundary excitations in
the numerical experiments. A single Cauchy pair produces one legitimate
probing index, but different choices of \(g\) may illuminate different
components of \(D\) or \(\Gamma\). When several Cauchy pairs are available, we
therefore combine the independently computed single-pair indices, for example
through a composite indicator of the form
\[
    I_{\rm comp}(x)
    =
    \sum_{m=1}^{N_{\rm data}}
    \left|\widetilde I^{(m)}(x)\right|^2,
\]
where \(\widetilde I^{(m)}\) denotes the normalized probing index obtained
from the \(m\)-th Cauchy pair.

Therefore, the effectiveness of the DSM relies on the property that the probing functions $\{\eta_x\}_{x \in \Omega}$ are almost orthogonal to the family of Green's functions $\{G_y|_L\}_{y \in \Omega}$ in the sense of the $H^s$ semi-inner product. That is, the kernel function
\begin{equation}
    K(x,y) = \frac{(\eta_{x}, G_{y})_{s}}{S(x)} \label{kernel}
\end{equation}
should have large magnitude when $x \approx y$ and decay rapidly in magnitude as the distance between $x$ and $y$ increases. The analysis of this almost-orthogonal property provides the theoretical foundation for the DSM's performance, which we further verify through both rigorous proof for a circular domain and numerical visualization in Section 3.
\subsection{Alternative characterization of the index function}
In this subsection, we introduce the adjoint method, which is an efficient and accurate technique to make the calculation of index functions less computationally expensive. The implementation of the index function $I_s(x)$ defined in Section 2.4 requires solving the mixed boundary value problem \eqref{probe-def} for the probing function $\eta_x$ at every sampling point $x \in \Omega$. In practice, when the sampling grid is dense, this point-by-point approach becomes computationally expensive. To enhance the efficiency of the DSM, we introduce an alternative characterization of the index function using the adjoint method, which allows for the simultaneous evaluation of $I_s(x)$ across the entire domain.

For a fixed regularity order $s \ge 0$, we define the adjoint state $\lambda_s \in H^1(\Omega)$ as the solution to the following adjoint problem:
\begin{equation}
    \left\{
\begin{aligned}
-\Delta \lambda_{s}=0 \quad &\text{in} \quad \Omega,\\
\frac{\partial\lambda_{s}}{\partial n}=0 \quad &\text{on} \quad \partial\Omega\setminus L,\\
\lambda_{s}=\mathbb{P}(f-f_{0}) \quad &\text{on} \quad L.\\
\end{aligned}
\right. \label{adjoint-def}
\end{equation}
Here, $\mathbb{P}$ denotes a projection operator  with respect to the chosen inner product. In the case of the $H^s$ semi-inner product, it is given by
\begin{equation}
    \mathbb{P} = -\frac{(-\Delta_{\Gamma})^{s}}{\delta}, \label{proj-def}
\end{equation}
where $\delta$ is a constant parameter, and $-\Delta_{\Gamma}$ is the Laplace-Beltrami operator. Then, applying Green's second identity, we obtain:
$$\lambda_s(x) = \int_\Omega (\omega_x\Delta\lambda_s - \lambda_s\Delta\omega_x) dy = \int_{\partial\Omega} \left(\omega_x\frac{\partial\lambda_s}{\partial n} - \lambda_s\frac{\partial\omega_x}{\partial n}\right) ds_y = \frac{1}{\delta}\int_L (-\Delta_\Gamma)^s(f-f_0)\frac{\partial\omega_x}{\partial n} ds_y = \frac{(f-f_0,\eta_x)_s}{\delta}.$$
Therefore, by the definition of the index function in (\ref{index-def}), we have
\begin{equation}
    I_{s}(x)=\delta\frac{\lambda_{s}(x)}{S(x)}.
\end{equation}
The constant \(\delta>0\) is introduced only as a scaling parameter, so that
the adjoint formulation is consistent with the refinement scheme in Section 4.
For the unrefined DSM index, its value has no effect on the peak locations. In the following, for the $H^1$ semi-inner product, a detailed analysis and calculations regarding the kernel function (\ref{kernel}) with the specific scaling function $S(x)$ are presented in Section 3.

\section{Almost orthogonal property of kernel functions}

In this section, we provide a detailed theoretical and numerical analysis of the DSM kernel functions. We first verify the almost orthogonal property for the case where $L = \partial\Omega$, meaning that full boundary measurement data $f$ are available on the entire boundary. Since numerical experiments indicate that the Sobolev regularity order $s=1$ is the most suitable choice for the DSM to maintain both accuracy and robustness against noise, we present the corresponding analytical calculations and numerical verifications for $s=1$ in Section 3.1. Furthermore, in Section 3.2, we conduct a frequency-domain analysis to establish a systematic criterion for selecting the Robin parameter $\alpha$.
\subsection{Theoretical analysis on a circular domain}
We consider the unit disk $\Omega = B_1(0)$. Let $(r_x, \theta_x)$ and $(r_y, \theta_y)$ be the polar coordinates of $x$ and $y$, respectively. For a harmonic function $v$ in $B_1(0)$ satisfying:
\begin{equation}
    \left\{
    \begin{aligned}
    -\Delta v=0\quad\text{in}\quad B_{1}(0),\\
    \frac{\partial v}{\partial n} + \alpha v =h\quad\text{on}\quad\mathbb{S}^{1},
    \end{aligned}
    \right. \label{circle-harmonic}
\end{equation}
we have:
\begin{equation}
    v(x) = \int_{\Omega}G_{x}\Delta v - v\Delta G_{x} = \int_{\partial\Omega}G_{x}\frac{\partial v}{\partial n}-v\frac{\partial G_{x}}{\partial n} = \int_{\partial\Omega}G_{x}(h-\alpha v)-v(-\alpha G_{x}) = \int_{\partial\Omega}G_{x}h. \label{compare1}
\end{equation}
Since $\Delta v = 0$, by using the polar coordinates, we obtain:
\begin{equation}
    \frac{\partial^{2} v}{\partial r^{2}} + \frac{1}{r}\frac{\partial v}{\partial r}+\frac{1}{r^{2}}\frac{\partial^{2}v}{\partial\theta^{2}}=0.
\end{equation}
Suppose $v(r,\theta) = R(r)\Theta(\theta)$. By using the separation of variables, we obtain:
\begin{equation}
    r^{2}R^{''}(r) + rR^{'}(r)-cR(r) = 0,\qquad \Theta^{''}(\theta) + c\Theta(\theta) = 0.
\end{equation}
By the periodical property of $\Theta$, we find that
$\{e^{in\theta}\}_{n\in\mathbb{Z}}$ is the fundamental solution set of $\Theta(\theta)$, and $\{r^{n}\}_{n\in\mathbb{Z}}\cup\{\log r\}$ is the fundamental solution set of $R(r)$. Hence, the solution can be shown as the following series:
\begin{equation}
    v(r,\theta) = c+d\log(r) + \sum_{n=1}^{\infty}(d_{n}r^{n}+d_{-n}r^{-n})(c_{n}e^{in\theta}+c_{-n}e^{-in\theta}). \label{general-sol}
\end{equation}
Since $\{r=0\}$ (the origin) is contained in $B_1(0)$, singular terms at the origin must be omitted. Hence, $v$ can be represented by
\begin{equation}
    v(r,\theta) = \sum_{n\in\mathbb{Z}}a_{n}r^{|n|}e^{in\theta} \label{sol-0include}
\end{equation}
with coefficients $\{a_n\}_{n\in\mathbb{Z}}$. Using (\ref{circle-harmonic}) and (\ref{sol-0include}), we have
$$h(\theta) = \sum_{n\in\mathbb{Z}}(|n|+\alpha)a_{n}e^{in\theta}.
$$
Therefore, the Fourier coefficients of $h\in L^{2}(\mathbb{S}^{1})$ can be given as
$$
    (|n|+\alpha)a_{n} = \frac{1}{2\pi}\int_{0}^{2\pi}h(\theta)e^{-in\theta}d\theta,\quad\text{hence}\quad a_{n} = \frac{1}{2\pi(|n|+\alpha)}\int_{0}^{2\pi}h(\theta)e^{-in\theta}d\theta.
$$
Consequently, we have
$$
    v(r_{x}, \theta_{x}) = \frac{1}{2\pi}\sum_{n\in\mathbb{Z}}\frac{1}{|n|+\alpha}r_{x}^{|n|}e^{in\theta_{x}}\int_{0}^{2\pi}h(\theta)e^{-in\theta}d\theta,
$$
which is equivalent to
\begin{equation}
    v(x) = \int_{\partial\Omega}h(y)\sum_{n\in\mathbb{Z}}\frac{1}{2\pi(|n|+\alpha)}r_{x}^{|n|}e^{in(\theta_{x}-\theta_{y})}ds_{y}. \label{compare2}
\end{equation}
Comparing (\ref{compare1}) and (\ref{compare2}), the Green's function $G_{x}(y)$ for $y\in\mathbb{S}^{1}$ can be written as
\begin{equation}
    G_{x}(y) = \sum_{n\in\mathbb{Z}}\frac{1}{2\pi(|n|+\alpha)}r_{x}^{|n|}e^{-in\theta_{x}}e^{in\theta_{y}}. \label{Gx-Fourier}
\end{equation}
Using a similar technique, the probing function $\eta_{x}$ on $\mathbb{S}^{1}$ can be evaluated as
\begin{equation}
    \eta_{x}(y) = \frac{\partial w_{x}}{\partial n} = -\frac{1}{2\pi}\sum_{n\in\mathbb{Z}}r_{x}^{|n|}e^{-in\theta_{x}}e^{in\theta_{y}}. \label{etax-Fourier}
\end{equation}
Thus, the inner product of $\eta_{x}$ and $G_{z}$ is given by
\begin{equation}
    (\eta_{x}, G_{z})_{1} = -\frac{1}{\pi}\text{Re}\left(\sum_{n=1}^{\infty}\frac{n^{2}}{n+\alpha}r_{x}^{n}r_{z}^{n}e^{in(\theta_{x}-\theta_{z})}\right).
\end{equation}
Furthermore, the $H^{0}$ and $H^{1}$ norms of $\eta_{x}$ and $G_{x}$ are computed as
\begin{equation}
    |\eta_{x}|_{H^{0}}^{2} = \frac{1}{2\pi}\sum_{n\in\mathbb{Z}}r_{x}^{2|n|} =\frac{1}{2\pi}\frac{1+r_{x}^{2}}{1-r_{x}^{2}},\quad |\eta_{x}|_{H^{1}}^{2} = \frac{1}{2\pi}\sum_{n\in\mathbb{Z}}n^{2}r_{x}^{2|n|} =  \frac{1}{\pi}\frac{r_{x}^{2}(1+r_{x}^{2})}{(1-r_{x}^{2})^{3}}.
\end{equation}
\begin{equation}
    |G_{x}|_{H^{0}}^{2} = \frac{1}{2\pi}\sum_{n\in\mathbb{Z}}\frac{1}{(|n|+\alpha)^{2}}r_{x}^{2|n|}, \quad |G_{x}|_{H^{1}}^{2} = \frac{1}{2\pi}\sum_{n\in\mathbb{Z}}\frac{n^{2}}{(|n|+\alpha)^{2}}r_{x}^{2|n|}.
\end{equation}
Intuitively, we take the scaling function as
$$S(x) = |\eta_{x}|_{H^{1}}^{1/2}|G_{x}|_{H^{1}}^{1/2}.
$$
Then, for a fixed $z\in\Omega$, the kernel function is expressed as
\begin{equation}
    K(x,z) = \frac{(\eta_{x}, G_{z})_{1}}{|\eta_{x}|_{H^{1}}^{1/2}|G_{x}|_{H^{1}}^{1/2}} = -c\frac{\sum_{n=1}^{\infty}\frac{n^{2}}{n+\alpha}r_{x}^{n}r_{z}^{n}\cos(n(\theta_{x}-\theta_{z}))}{\left(\frac{r_{x}^{2}(1+r_{x}^{2})}{(1-r_{x}^{2})^{3}}\right)^{1/4}\left(\sum_{n\in\mathbb{Z}}\frac{n^{2}}{(|n|+\alpha)^{2}}r_{x}^{2|n|}\right)^{1/4}}. \label{kernel_Fourier}
\end{equation}
Here, $c>0$ is a constant independent of $r_x$, $r_z$ and $\theta_x$, $\theta_z$. The minus sign in (\ref{kernel_Fourier}) is a global sign and does not affect peak locations; in the following numerical plots and in Appendix \ref{app:almost-orthogonal-proof}, we use the sign-normalized kernel and still denote it by $K$ for simplicity. We can see from (\ref{kernel_Fourier}) that for fixed $r_x$ and $r_z$, this kernel $K$ attains its maximum when $\theta_x = \theta_z$. Therefore, for each fixed $r_{z}$, we only need to verify whether $K(x,z)$ reaches its maximum at $r_{x}\approx r_{z}$ under the assumption that $\theta_x = \theta_z$. A rigorous proof establishing this almost orthogonal property for the kernel (\ref{kernel_Fourier}) with $\alpha=1$ is provided in Appendix \ref{app:almost-orthogonal-proof}. In this section, we first verify it numerically.

We compute the partial sum of the first 100 terms of the Fourier series for $r_{z}\in(0,1)$and plot the results, where the y-axis represents $\text{argmax}_{r_{x}}K(x,z)$ and the x-axis represents $r_z$. Figure \ref{argmax} shows the argmax graphs of the kernel function $K$ for different $\alpha$, and Figure \ref{circle} displays the profiles of $K(x,z)$ for two different $z$ in $B_{1}(0)$.
\begin{figure}[h]
    \centering
    \includegraphics[scale=0.27]{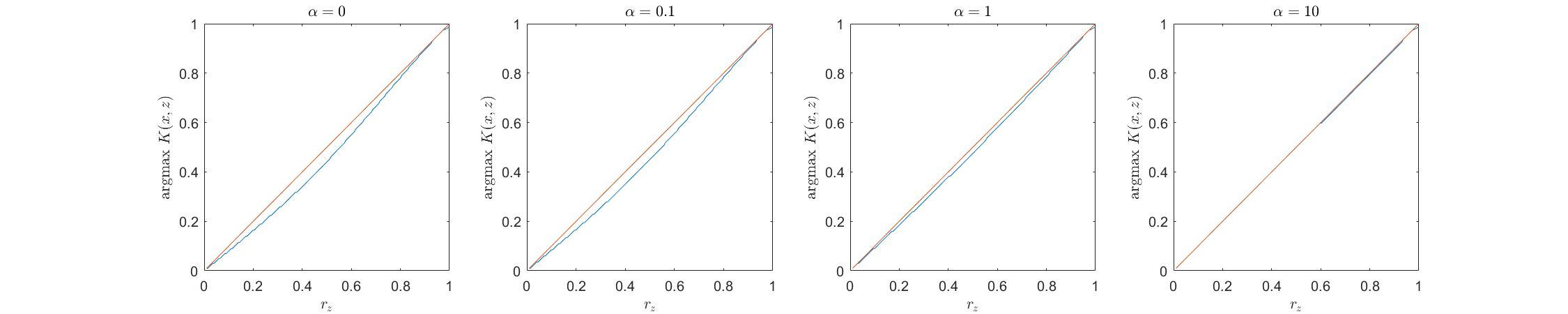}
    \caption{Location of the maximum value of $K(x,z)$ in (\ref{kernel_Fourier}) when $\theta_{x}=\theta_{z}$.}
    \label{argmax}
\end{figure}
\begin{figure}[h]
\centering
\subfigure
{
    \begin{minipage}[b]{.45\linewidth}
        \centering
        \includegraphics[scale=0.23]{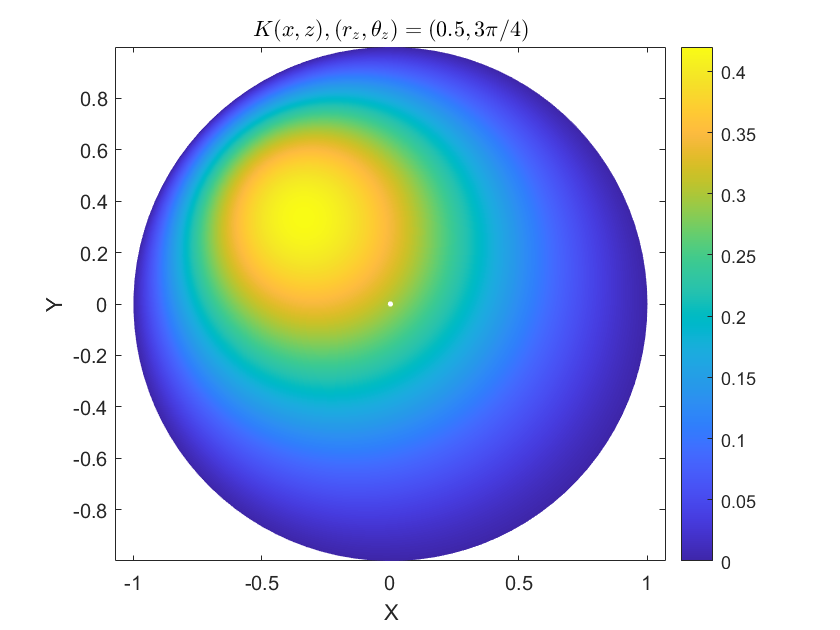}
    \end{minipage}
}
\subfigure
{
 	\begin{minipage}[b]{.45\linewidth}
        \centering
        \includegraphics[scale=0.23]{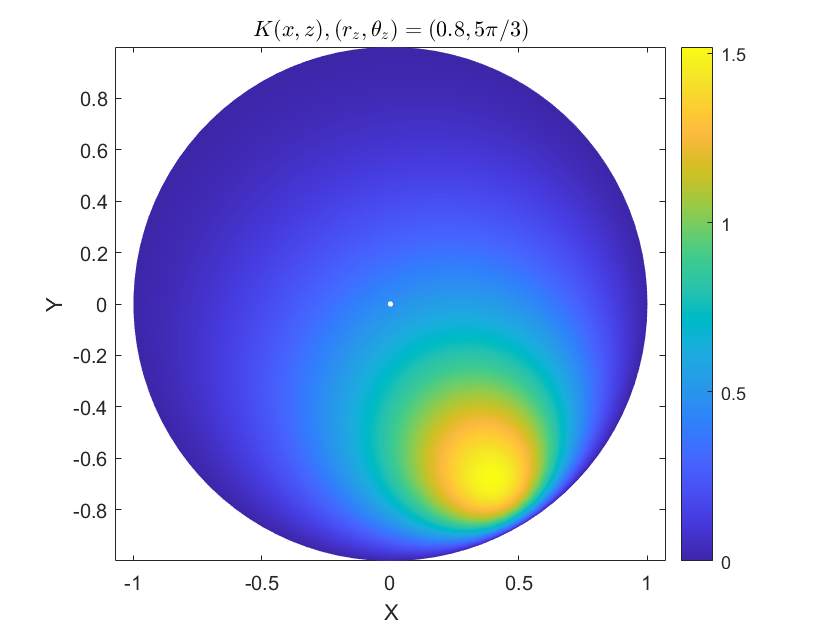}
    \end{minipage}
}
\caption{\centering{Profiles of the kernel $K(x,z)$ in $B_{1}(0)$ for $\alpha=1$.\\
Left: $(r_z,\theta_z)=(0.5, 3\pi/4)$. 
Right: $(r_z,\theta_z)=(0.8, 5\pi/3)$.}}\label{circle}
\end{figure}

For the square domain $\Omega = [-1,1]\times[-1,1]$, we adopt the same scaling function. To enhance computational efficiency, we avoid directly evaluating the Green's functions $w_{x}$ and $G_{x}$ at each point $x$. We follow \cite{chow2021direct_simul} by utilizing the fundamental solution $\Phi_{x}(y) = \frac{1}{2\pi}\log |x-y|$ to define the modified scaling function as
\begin{equation}
S(x) = \left|\frac{\partial\Phi_{x}}{\partial n}\right|_{H^{0}}^{1/2}\left|\frac{\partial\Phi_{x}}{\partial n}\right|_{H^{1}}^{1/2}, \label{scaling}
\end{equation}
which can be computed directly. Here, we employ the following approximations:
$$    |G_{x}|_{H^{1}}\approx\left|\frac{\partial\Phi_{x}}{\partial n}\right|_{H^{0}},\quad |\eta_{x}|_{H^{1}}\approx\left|\frac{\partial\Phi_{x}}{\partial n}\right|_{H^{1}}.
$$
To verify the almost orthogonal property of the modified kernel function in the square domain $\Omega = [-1,1]\times[-1,1]$, we plot a 3D surface $(z_0,D(z_0))$ for $z_0 \in \Omega$, where $D(z_0)$ is defined as
\begin{equation}
    D(z_0) = \left|\text{argmax}_{z} \frac{(\eta_{z}, G_{z_{0}})}{|\frac{\partial\Phi_{z}}{\partial n}|_{H^{0}}^{1/2}|\frac{\partial\Phi_{z}}{\partial n}|_{H^{1}}^{1/2}} - z_{0}\right|. \label{distance}
\end{equation}
The surface graph is shown in Figure \ref{fig:square-dist}.
We can see from Figure \ref{fig:square-dist} that when $K(z,z_{0})$ attains its maximum in $\Omega=[-1,1]\times[-1,1]$, the distance between $z$ and $z_{0}$ remains strictly less than 0.1. This consistently tight bound numerically verifies the almost orthogonal property of the kernel $K$ utilizing the modified scaling function (\ref{scaling}).

Figure \ref{square-kernel1} illustrates the kernel function $K(x,z)$ for two different source points $z$ (marked by dark blue dots). The heatmap demonstrates that the maximum values (represented by red regions) are sharply localized around each $z$ while exhibiting rapid decay away from them. This concentration behavior numerically confirms the almost orthogonal property of the kernel, with 
$K(x,z)$ effectively serving as a spatial indicator that peaks near $x=z$ and decreases with increasing distance from the source point. This concludes the verification of our chosen scaling function $S(x)$ for the Sobolev regularity order $s=1$.
\begin{figure}[H]
\centering
\begin{minipage}[t]{0.40\linewidth}
\centering
\vspace{0pt}
\includegraphics[width=\linewidth]{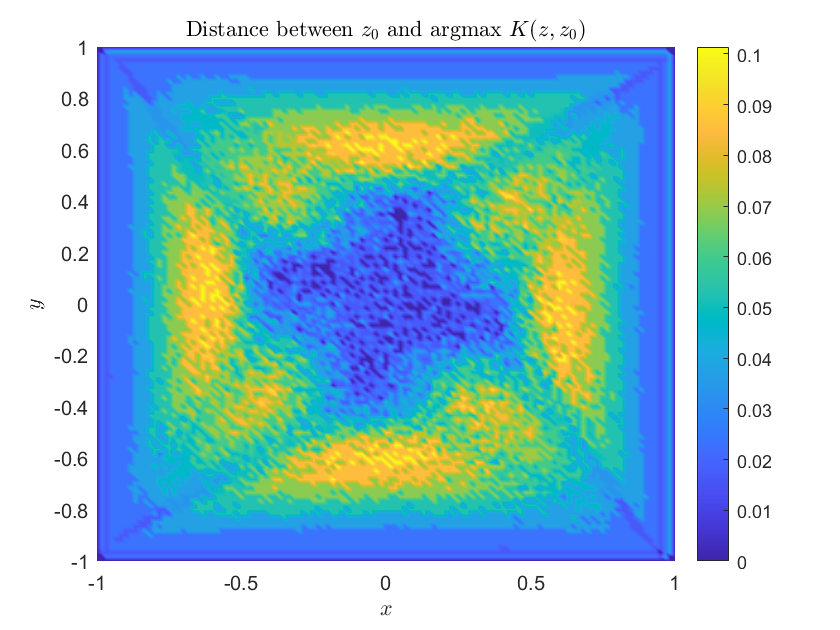}
\caption{Distance between $z_0$ and argmax $K(z, z_0)$ in (\ref{distance}) when $\alpha=1$.}
\label{fig:square-dist}
\end{minipage}\hfill
\begin{minipage}[t]{0.56\linewidth}
\centering
\vspace{0pt}
\includegraphics[width=0.48\linewidth]{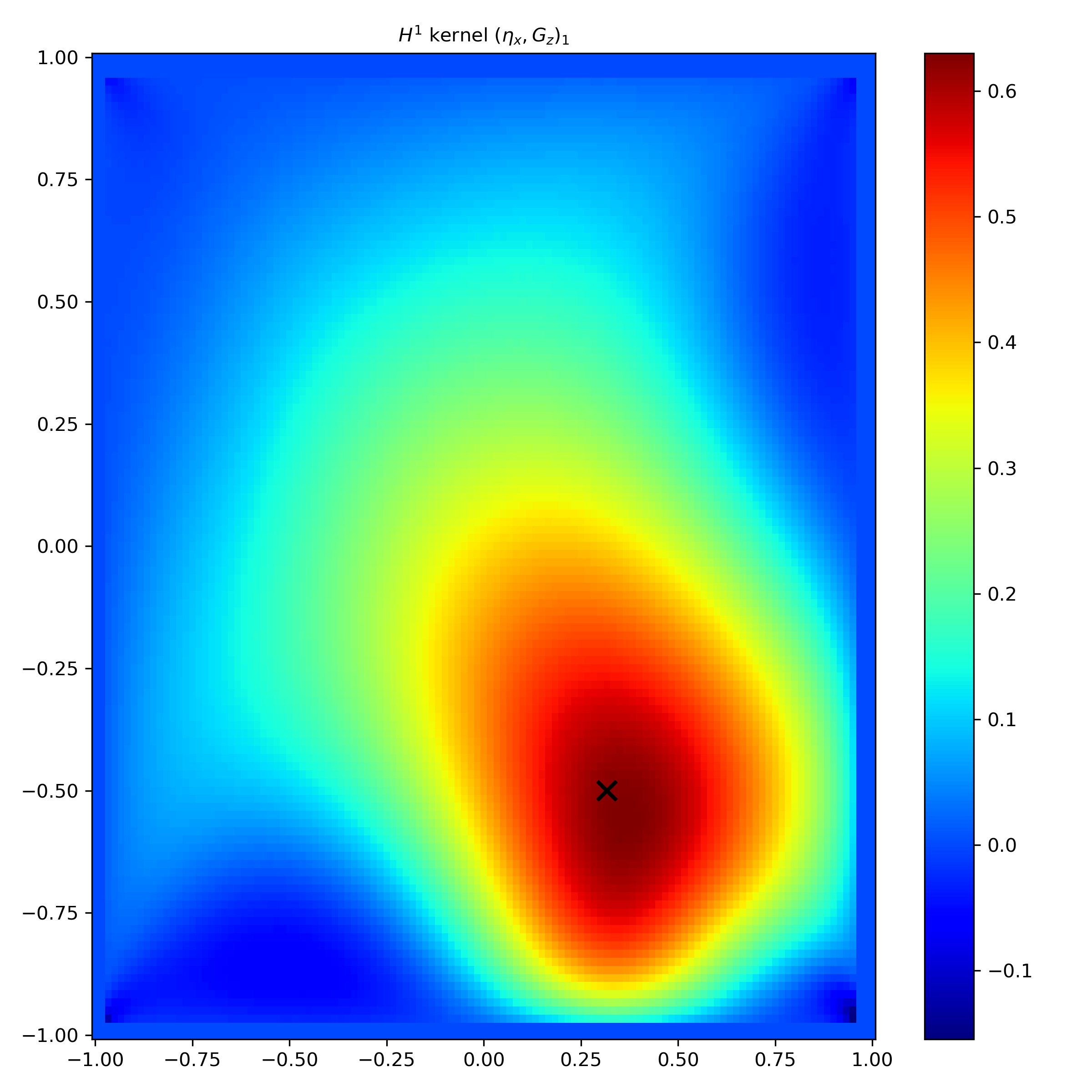}\hfill
\includegraphics[width=0.48\linewidth]{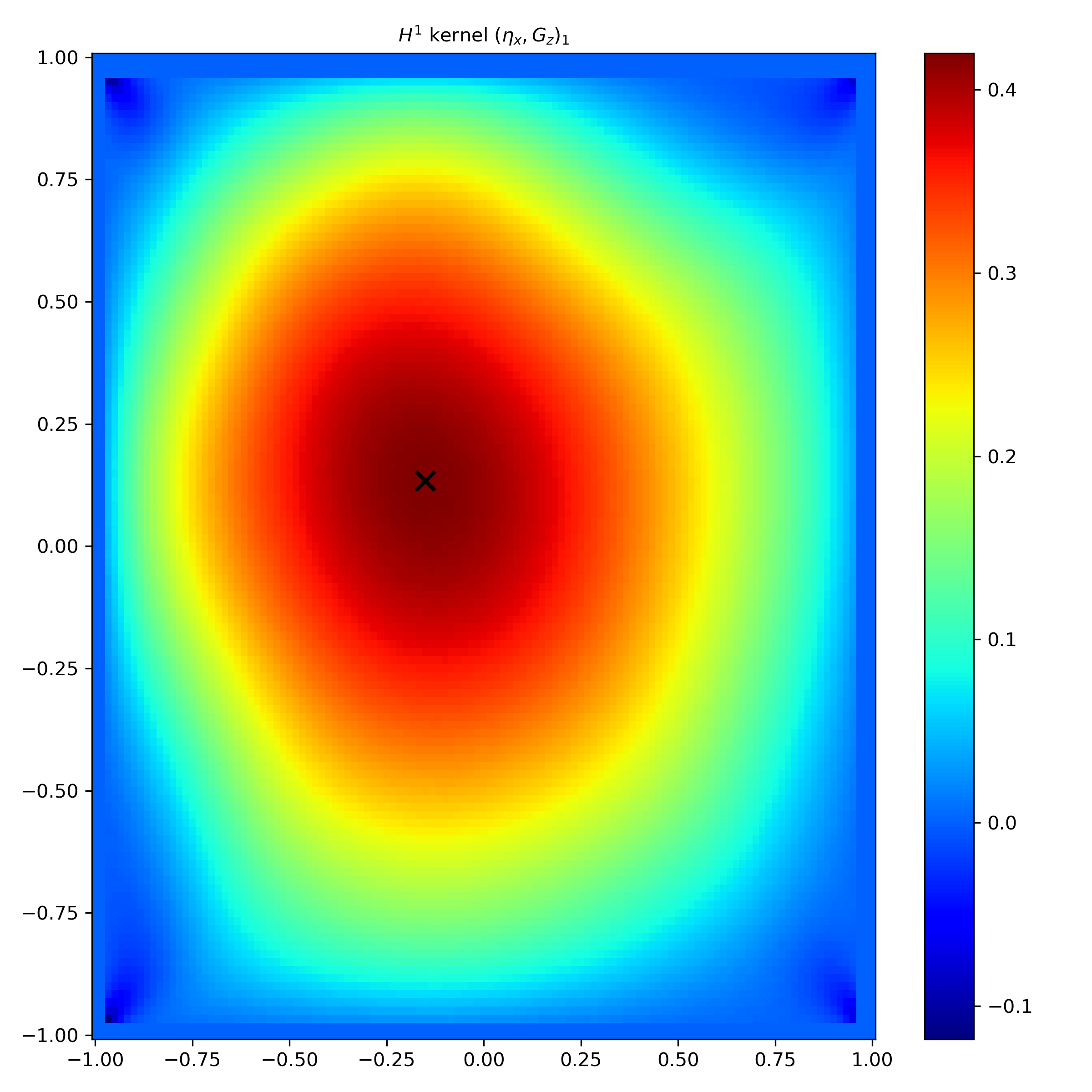}
\caption{$H^{1}$ kernel $K(x,z)$ for $z = (0.317,-0.5)$ and $z=(-0.15, 0.133)$ when $\alpha=1$.}
\label{square-kernel1}
\end{minipage}
\end{figure}

\subsection{Frequency-domain analysis and the selection of the Robin parameter $\alpha$}
\label{sec:alpha_analysis}

The Robin parameter $\alpha$ is pivotal in the reference system, as it governs the balance between high-frequency spatial resolution and the preservation of low-frequency macroscopic structures. Because the scaling function $S(x)$ in the DSM acts as a data-independent spatial normalizer, the spatial profile and the spectral energy distribution of the index function $I_{1}(x)$ are uniquely determined by its unscaled numerator $(f^{\epsilon}-f_{0}^{\epsilon},\eta_{x})_{1}$. To establish a systematic criterion for selecting $\alpha$, we analyze the spectral behavior of this numerator and how $\alpha$ weights the signal and noise across different frequencies using Fourier series.

In this subsection, we distinguish between the exact and noisy reference states. Let $u_0^{\rm ex}$ be the reference solution generated from the exact boundary value $f_{\rm exact}$, and let $u_0^{\epsilon}$ be the reference solution generated from the noisy boundary value $f^{\epsilon}$. We write $f_0^{\rm ex}=u_0^{\rm ex}|_L$ and $f_0^{\epsilon}=u_0^{\epsilon}|_L$.

\begin{lemma}\label{lem:fourier_decay}
Let $J(y)$ denote the physical source induced by the jump conditions on the interior interface $\Gamma$. The exact discrepancy $f_{\rm exact} - f_0^{\rm ex}$ on the outer measurement boundary $L$ (assumed to be the unit circle, $r_x = 1$) satisfies the following exponential spatial decay bound in the Fourier domain:
\begin{equation} \left|\widehat{(f_{\rm exact} - f_{0}^{\rm ex})}_{n}\right| \le C\frac{R_{\text{max}}^{|n|}}{|n|+\alpha}, \label{fourier_dacay}
\end{equation}
where $R_{max} = \max_{y \in \Gamma} r_y < 1$, and $C = \frac{1}{2\pi} \int_{\Gamma} |J(y)| ds_y$.
\end{lemma}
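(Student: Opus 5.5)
The plan is to combine the representation formulae \eqref{step1repre}/\eqref{step2repre} with the Fourier expansion \eqref{Gx-Fourier} of the Robin Green's function on the unit disk. Here $J$ denotes the effective density, $J=J_I$ from \eqref{eq:effective-density-I} for Problem I or $J=J_{II}=\gamma u$ for Problem II. In the disk setting with $L=\mathbb{S}^1$, the two formulae read, for $x\in\mathbb{S}^1$,
\[
(f_{\rm exact}-f_0^{\rm ex})(x)=\int_\Gamma J(y)\,G_y(x)\,ds_y .
\]
This uses the exact reference state $u_0^{\rm ex}$, so that the boundary terms cancel exactly as in Section 2.2. By the symmetry $G_y(x)=G_x(y)$ and \eqref{Gx-Fourier} with the roles of the interior point and the boundary point assigned appropriately (interior point $y\in\Gamma$ with $r_y<1$, boundary point $x$ with $r_x=1$), we get
\[
G_y(x)=\sum_{n\in\mathbb{Z}}\frac{1}{2\pi(|n|+\alpha)}\,r_y^{|n|}e^{-in\theta_y}e^{in\theta_x}.
\]

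Next we insert this series into the integral and interchange summation and integration. This is justified because $\Gamma\Subset\Omega$ gives $R_{\max}<1$, so the series converges absolutely and uniformly in $y\in\Gamma$ by comparison with $\sum R_{\max}^{|n|}/(|n|+\alpha)$. We also need $J\in L^1(\Gamma)$, which holds since $\gamma,\gamma_0\in L^\infty(\Gamma)$ and $u,u_0$ have $L^2$ (indeed $H^{1/2}$) traces on $\Gamma$. Reading off the coefficient of $e^{in\theta_x}$ then gives the $n$-th Fourier coefficient in the expansion $f=\sum_n \hat f_n e^{in\theta}$:
\[
\widehat{(f_{\rm exact}-f_0^{\rm ex})}_n=\frac{1}{2\pi(|n|+\alpha)}\int_\Gamma J(y)\,r_y^{|n|}e^{-in\theta_y}\,ds_y .
\]

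Finally, we take absolute values and use $|e^{-in\theta_y}|=1$ together with $r_y^{|n|}\le R_{\max}^{|n|}$ for $y\in\Gamma$. This yields
\[
\left|\widehat{(f_{\rm exact}-f_0^{\rm ex})}_n\right|\le \frac{R_{\max}^{|n|}}{|n|+\alpha}\cdot\frac{1}{2\pi}\int_\Gamma|J(y)|\,ds_y,
\]
which is \eqref{fourier_dacay} with the stated constant $C$.

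The computation itself is routine. The main obstacle is justifying that the representation formula, which was derived pointwise for interior $x$ and then passed to the boundary by continuity, coincides with the boundary Fourier series. I would handle this by noting that for $r_x\le1$ and $y\in\Gamma$ the series converges uniformly in $x$ as well. Hence the boundary limit can be taken termwise and matches the trace of $u-u_0^{\rm ex}$ on $\mathbb{S}^1$ in $L^2$, so the Fourier coefficients are identified.
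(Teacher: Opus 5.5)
Your proposal is correct and follows the paper's proof essentially line by line: you write the boundary discrepancy as $\int_\Gamma J(y)G_y(x)\,ds_y$, insert the series \eqref{Gx-Fourier}, interchange sum and integral, and bound $r_y^{|n|}\le R_{\max}^{|n|}$, and your uniform-convergence and $J\in L^1(\Gamma)$ remarks supply justification the paper leaves implicit. One small correction: the series \eqref{Gx-Fourier} represents $G_y(x)$ only for $x\in\mathbb{S}^1$, so passing to the boundary is justified by the continuity of $x\mapsto\int_\Gamma J(y)G_y(x)\,ds_y$ up to $\partial\Omega$ (since $\Gamma$ lies at positive distance from $\partial\Omega$), not by taking a termwise limit of that series.
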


\begin{proof}
The exact discrepancy on the measurement boundary can be expressed via Green's formula:
\begin{equation}
    (f_{\rm exact} - f_0^{\rm ex})(x) = \int_{\Gamma} J(y) G_y(x) ds_y.
\end{equation}
Substituting (\ref{Gx-Fourier}) into the integral and exchanging the order of integration and summation, we obtain the $n$-th Fourier coefficient of the exact boundary signal:
\begin{equation}
    \widehat{(f_{\rm exact} - f_0^{\rm ex})}_n = \frac{1}{2\pi(|n| + \alpha)} \int_{\Gamma} J(y) r_y^{|n|} e^{-in\theta_y} ds_y.
\end{equation}
Given that the interface $\Gamma$ is strictly enclosed within the unit disk, $r_y \le R_{max} < 1$. Taking the absolute value yields the desired inequality (\ref{fourier_dacay}).
\end{proof}

This inequality highlights the ill-posedness of the inverse problem: high-frequency structural information of the source is subjected to exponential decay $R_{max}^{|n|}$ before reaching the boundary. In practice, the exact data is polluted by noise $\epsilon$, yielding the noisy Fourier coefficients $\hat f^\epsilon_n = \hat{f}_{{\rm exact},n} + \epsilon_n$. The exact and noisy reference solutions satisfy
\begin{equation}
    \widehat{u_0^{\rm ex}}_n = \frac{\hat g_n + \alpha \hat f_{{\rm exact},n}}{|n|+\alpha}, \qquad
    \widehat{u_0^{\epsilon}}_n = \frac{\hat g_n + \alpha \hat f^\epsilon_n}{|n|+\alpha}.
\end{equation}
Hence the Fourier coefficients of the noisy input discrepancy $d^{\epsilon}:=f^{\epsilon}-u_0^{\epsilon}$ satisfy
\begin{equation}
    \hat{d}^{\epsilon}_n = \widehat{(f_{\rm exact} - u_0^{\rm ex})}_n + \frac{|n|}{|n| + \alpha} \epsilon_n.
\end{equation}
The $H^1$ inner product in the index function introduces a multiplier $|n|^2$. Following (\ref{etax-Fourier}), the numerator of the probing index becomes, up to the same global sign normalization as in Section 3.1,
\begin{equation}
    \text{Numerator}^{\epsilon}(x) = \frac{1}{2\pi} \sum_{n \in \mathbb{Z}} |n|^2 \widehat{(f_{\rm exact} - u_0^{\rm ex})}_n r_x^{|n|} e^{in\theta_x} + \frac{1}{2\pi} \sum_{n \in \mathbb{Z}} \frac{|n|^3}{|n| + \alpha} \epsilon_n r_x^{|n|} e^{in\theta_x}.
\end{equation}
Based on the physical decay bound (\ref{fourier_dacay}), we define the effective signal weight $SW(n)$ and noise weight $NW(n)$ as:
\begin{equation}
    SW(n) = \frac{|n|^2}{|n| + \alpha} R_{max}^{|n|}, \quad NW(n) = \frac{|n|^3}{|n| + \alpha}.
    \label{eq:spectral_weights}
\end{equation}

Regarding the selection of $\alpha$, numerical experiments indicate that utilizing a small parameter such as $\alpha = 0.1$ yields an overly flat index profile. When $\alpha \to 0$, the signal weight degenerates to $SW(n) \approx |n| R_{max}^{|n|}$. Without sufficient penalization of fundamental modes, the signal energy is excessively concentrated at the lowest frequencies (e.g., $n=1,2$), which spatially manifests as broad, smooth waves. Moreover, a small $\alpha$ leads to an abnormally large scaling factor $S(x, \alpha)$, uniformly squashing the amplitude of the index function.

To systematically evaluate the optimal $\alpha$, Table \ref{tab:spectral_weights} compares these weights at low-frequency ($n=2$) and mid-frequency ($n=5$) modes for $R_{max} = 0.5$.

\begin{table}[htbp]
\centering
\caption{Comparison of spectral weights for different $\alpha$ ($R_{max} = 0.5$).}
\label{tab:spectral_weights}
\begin{tabular}{lcccc}
\toprule
Parameter & Mode ($n$) & $SW(n)$ & $NW(n)$ \\
\midrule
\multirow{2}{*}{$\alpha=1$} & Low ($n=2$) & $0.333$ & $2.667$\\
 & Mid ($n=5$) & $0.130$ & $20.833$ \\
\midrule
\multirow{2}{*}{$\alpha=10$} & Low ($n=2$) & $0.083$ & $0.667$ \\
 & Mid ($n=5$) & $0.052$ & $8.333$\\
\bottomrule
\end{tabular}
\end{table}

For Problem I, localizing sharp flux jumps necessitates isolating higher-frequency features. Table \ref{tab:spectral_weights} shows that $\alpha=10$ shifts the signal energy centroid toward higher frequencies. Specifically, the high-to-low ratio $SW(5)/SW(2)$ increases from $0.39$ (when $\alpha=1$) to $0.63$. By damping the absolute high-frequency noise $NW(5)$, this spectral tilt achieves high spatial resolution for local anomalies. Conversely, for Problem II, reconstructing an unknown macroscopic topology relies on the survival of low-frequency modes. In contrast to the high-pass effect of $\alpha=10$, a moderate $\alpha=1$ yields a smaller ratio $SW(5)/SW(2) = 0.39$. This ensures the spectral centroid remains anchored at lower frequencies, which is necessary to recover the macroscopic geometry against the $R_{max}^{|n|}$ decay.

\section{A refinement approach for DSM}
In this section, the refinement approach is presented and tested only for the
full Dirichlet measurement case \(L=\partial\Omega\). Experiments show that the numerical results of the DSM with the $H^{0}$ semi-inner product are more robust to noise but less precise than those with the $H^{1}$ semi-inner product. We then present a refinement approach to enhance the performance of the DSM with the $H^{0}$ semi-inner product. Using Green's second identity and equations (\ref{con:model_problem}) (\ref{step1_ref_sys})(\ref{step2_ref_sys})(\ref{probe-def})(\ref{adjoint-def}), we have for Problem I:
\begin{equation}
    \lambda_{0}(x)=(f-f_{0}, \eta_{x})_{0} = -(u-u_{0})(x)+\int_{\Gamma}(\gamma u-\gamma_{0}u_{0})\omega_{x},\quad x\in\text{int}(\Omega). \label{lam-step1-rep}
\end{equation}
Similarly, for Problem II, we obtain:
\begin{equation}
    \lambda_{0}(x)=(f-f_{0}, \eta_{x})_{0} = -(u-u_{0})(x)+\int_{\Gamma}\gamma u\omega_{x},\quad x\in\text{int}(\Omega). \label{lam-step2-rep}
\end{equation}
Note that in numerical tests, these two equations are valuable tools for verifying the correct solution of the forward model problem (\ref{con:model_problem}). If a better approximation $\hat{u}$ of the true solution $u$ is available compared to the reference solution $u_{0}$, we can sharpen the numerical results of the DSM using the $H^{0}$ semi-inner product. To find a better approximation $\hat{u}$ to replace $u_0$ in (\ref{lam-step1-rep}) and (\ref{lam-step2-rep}), we propose the following iterative algorithm.
\begin{enumerate}
    \item Initialize $\lambda^{0}(x)=0$, and set a constant $\delta > 0$.
    \item For $k = 0, 1$:\\
For Problem I, solve
\begin{equation}
    \left\{
\begin{aligned}
-\Delta u_{k}=-\lambda^{k} \quad \text{in} \quad &\Omega\setminus\Gamma,\\
\left[\frac{\partial u_{k}}{\partial \nu}\right] = \gamma_{0}u_{k} \quad \text{on}\quad &\Gamma, \\
\frac{\partial u_{k}}{\partial n}+\alpha u_{k}=g+\alpha f \quad \text{on} \quad & L,\\
\frac{\partial u_{k}}{\partial n} = g\quad \text{on} \quad &\partial\Omega\setminus L.
\end{aligned}
\right.
\end{equation}
for Problem II, solve
\begin{equation}
    \left\{
\begin{aligned}
-\Delta u_{k}=-\lambda^{k} \quad \text{in} \quad &\Omega,\\
\frac{\partial u_{k}}{\partial n}+\alpha u_{k}=g+\alpha f \quad \text{on} \quad & L,\\
\frac{\partial u_{k}}{\partial n} = g\quad \text{on} \quad &\partial\Omega\setminus L,
\end{aligned}
\right. \label{u_k_equation}
\end{equation}
to obtain $f_{k} = u_{k}|_{\partial\Omega}$. Then, solve
\begin{equation}
        \left\{
\begin{aligned}
-\Delta \lambda^{k+1}=0 \quad \text{in} \quad &\Omega,\\
\lambda^{k+1}=-\frac{f-f_{k}}{\delta}\quad \text{on} \quad &\partial\Omega,\\
\end{aligned}
\right.
\end{equation}
to obtain $\lambda^{k+1}$, and update $k \leftarrow k+1$.
\item Compute the modified probing index
\begin{equation}
    \hat{I}_{0}(x) = \frac{\lambda^{2}(x)}{S(x)} .\label{index-refine-def}
\end{equation}
To satisfy the almost orthogonal property, we define the scaling function for the $H^0$ semi-inner product as
$$S(x) = \left|\frac{\partial\Phi_{x}}{\partial n}\right|_{L^{2}}.
$$
\end{enumerate}
From this iterative scheme, we can utilize Green's second identity to show that for Problem I,
\begin{equation}
    \lambda^{2}(x) = \frac{1}{\delta}\left(-(u-u_{1})(x) + \int_{\Omega}\lambda^{1}\omega_{x} + \int_{\Gamma}(\gamma u-\gamma_{0}u_{1})\omega_{x}\right). \label{lamk-step1-rep}
\end{equation}
Similarly, for Problem II, we have:
\begin{equation}
    \lambda^{2}(x) = \frac{1}{\delta}\left(-(u-u_{1})(x) + \int_{\Omega}\lambda^{1}\omega_{x} + \int_{\Gamma}\gamma u\omega_{x}\right). \label{lamk-step2-rep}
\end{equation}
Comparing (\ref{lamk-step1-rep})(\ref{lamk-step2-rep}) with (\ref{lam-step1-rep})(\ref{lam-step2-rep}), the $u_{0}$ terms are replaced by $u_{1}$, and an additional term $\int_{\Omega}\lambda^{1}\omega_{x} dy$ is introduced. This new term is continuously differentiable due to the elliptic regularity of Poisson's equation. The numerical results presented in Section 5.3 demonstrate that this refinement enhances the performance of the DSM with the $H^0$ semi-inner product while maintaining its excellent robustness against noise.

To theoretically justify the proposed refinement, we perform the following calculations for the case where $L = \partial\Omega$, meaning that the full measurement data $f$ is available on the entire outer boundary. For Problem I, using the calculations in Section 2.6 and equation (\ref{step1repre}), we obtain:
$$\lambda^{1}(x) = \frac{1}{\delta}\int_{\partial\Omega}(f-f_0)(y)\eta_x(y)ds_y = \frac{1}{\delta}\int_{\partial\Omega}\left(\int_{\Gamma}(\gamma(z)u(z)-\gamma_0u_0(z))G_{z}(y)ds_z\right)\eta_{x}(y)ds_y.
$$
Then, by Fubini's theorem and the definition of the kernel function (\ref{kernel}), the original index function is given by:
\begin{equation}
  I_0(x) = \frac{\lambda^{1}(x)}{S(x)} = \frac{1}{\delta S(x)}\int_{\Gamma}(\gamma(z)u(z)-\gamma_0u_0(z))K(x,z)S(x)ds_z = \frac{1}{\delta}\int_{\Gamma}(\gamma(z)u(z)-\gamma_0u_0(z))K(x,z)ds_z  .  \label{I_step1_repre}
\end{equation}
Using a technique similar to that in Section 2.3 along with the refinement scheme, we expand the difference:
$$\begin{aligned}
    (u-u_{1})(x) &= \int_{\Omega}-(u-u_{1})\Delta G_{x} dy + \int_{\Omega_{1}}G_{x}\Delta(u-u_1) dy + \int_{\Omega_{2}}G_{x}\Delta(u-u_1) dy + \int_{\Omega}G_x\Delta u_1 dy \\
    &=\int_{\Gamma}(\gamma(y) u(y)-\gamma_0 u_1(y))G_x(y)ds_y + \int_{\Omega}G_x(y)\lambda^{1}(y)dy.
\end{aligned}$$
Subsequently, according to the refinement scheme, we have:
$$\begin{aligned}
    \lambda^{2}(x) &= \frac{1}{\delta}\int_{\partial\Omega}(f-f_1)(y)\eta_x(y)ds_y = \frac{1}{\delta}\int_{\partial\Omega}(u-u_1)(y)\eta_x(y)ds_y \\
    &= \frac{1}{\delta}\int_{\Gamma}(\gamma(y) u(y)-\gamma_0 u_1(y))S(x)K(x,y)ds_y + \frac{1}{\delta}\int_{\Omega}\lambda^{1}(y)S(x)K(x,y)dy.
\end{aligned}$$
Therefore, the refined index function can be expressed as:
\begin{equation}
    \begin{aligned}
    \hat{I}_0(x) &= \frac{\lambda^{2}(x)}{S(x)} = \frac{1}{\delta}\int_{\Gamma}(\gamma(y) u(y)-\gamma_0 u_1(y))K(x,y)ds_y + \frac{1}{\delta}\int_{\Omega}\lambda^{1}(y)K(x,y)dy\\
    &=\frac{1}{\delta}\int_{\Gamma}(\gamma(y) u(y)-\gamma_0 u_1(y))K(x,y)ds_y + \frac{1}{\delta^2}\int_{\Gamma}(\gamma(y) u(y)-\gamma_0 u_0(y))\left(\int_{\Omega}S(z)K(y,z)K(x,z)dz\right)ds_y. \label{hat_I_step1_repre}
\end{aligned}
\end{equation}
Similarly, for Problem II, the original index function is represented as:
\begin{equation}
  I_0(x) = \frac{\lambda^{1}(x)}{S(x)} = \frac{1}{\delta S(x)}\int_{\Gamma}\gamma(z)u(z)K(x,z)S(x)ds_z = \frac{1}{\delta}\int_{\Gamma}\gamma(z)u(z)K(x,z)ds_z  .  \label{I_step2_repre}
\end{equation}
Furthermore, the refined index function is expressed as:
\begin{equation}
    \begin{aligned}
    \hat{I}_0(x) &= \frac{\lambda^{2}(x)}{S(x)} = \frac{1}{\delta}\int_{\Gamma}\gamma(y)u(y)K(x,y)ds_y + \frac{1}{\delta}\int_{\Omega}\lambda^{1}(y)K(x,y)dy\\
    &=\frac{1}{\delta}\int_{\Gamma}\gamma(y)u(y)K(x,y)ds_y + \frac{1}{\delta^2}\int_{\Gamma}\gamma(y)u(y)\left(\int_{\Omega}S(z)K(y,z)K(x,z)dz\right)ds_y .\label{hat_I_step2_repre}
\end{aligned}
\end{equation}
Comparing (\ref{I_step1_repre})(\ref{hat_I_step1_repre}) and (\ref{I_step2_repre})(\ref{hat_I_step2_repre}), we observe that the refinement term $\int_{\Omega}S(z)K(y,z)K(x,z)dz$ enhances the original index function through a fundamental mechanism: The product $K(y,z)K(x,z)$ effectively reinforces the almost orthogonal property of the kernel function. This reinforcement occurs because the kernel product amplifies coherence when both $x$ and $y$ approach the true inhomogeneity location $z$, and accelerates decay when either $x$ or $y$ moves away from $z$. Consequently, the modified index function $\hat{I}_0(x)$ exhibits superior localization properties while maintaining the inherent robustness of the $H^0$ formulation.

\section{Numerical results}
In this section, we first present numerical results using the $H^{1}$ semi-inner product for Problem I and Problem II without the refinement approach. The corresponding figures are shown in Sections 5.1 and 5.2. Subsequently, the numerical results of the refinement approach utilizing the $H^{0}$ semi-inner product are presented in Section 5.3. We first outline the computation procedure without refinement.

First, we designed a specific interface $\Gamma\subset\Omega = [-1,1]\times[-1,1]$ and a function $g:\partial\Omega\rightarrow\mathbb{R}$ as the boundary flux. Second, we applied the finite difference method to solve \eqref{con:model_problem}, supplemented with the Neumann boundary condition \(\partial u/\partial n=g\) on \(\partial\Omega\), to obtain the exact boundary measurement $f=u|_L$. To test the robustness of the DSM, we added random noise to $f$ and obtained $f^{\epsilon}$, so that $f^{\epsilon} = (1+\epsilon\xi)f$.
Here, \(\xi\) is a uniformly distributed random variable in \([-1,1]\), and $\epsilon$ is the noise level, which is specified in each figure caption. In our numerical tests, we took $\gamma$ as a piecewise continuous function. Third, we solved the reference system (\ref{step1_ref_sys})(\ref{step2_ref_sys}) using the boundary flux $g$ and the noisy measurement data $f^{\epsilon}$ to obtain the reference solution $u_{0}$. Fourth, using the reference data $f_{0} = u_{0}|_{L}$, we solved the adjoint system (\ref{adjoint-def}) to obtain the adjoint function $\lambda_{s}$. Finally, we computed our desired probing index $I_{s}(x)$:
\begin{equation}
    I_{s}(x) = \frac{\lambda_{s}(x)}{S(x)}.
\end{equation}
Here, the scaling function $S$ was computed by (\ref{scaling}) for $s=1$. To normalize our probing index, we plotted the graph of $\widetilde{I}_{1}(x)$ for our numerical results, where
\[
\widetilde{I}_{1}(x)=
\left({I_{1}(x)}/{\max(\sup I_{1}(x),10^{-6})}\right)^2
\quad \text{if } I_{1}(x)\geq0,\qquad
\left({I_{1}(x)}/{\min(\inf I_{1}(x),-10^{-6})}\right)^2
\quad \text{if } I_{1}(x)<0.
\]

For Problem I, the probing index $I_1$ is defined on $\Gamma$ and is set to 0 otherwise. We set the background value $\gamma_{0}=1$, where $\gamma$ exhibited inhomogeneity on $\Gamma$. On the graph of the normalized probing index $\widetilde{I}_{1}$, we plot small red marks at the location of $D=\text{supp}(\gamma - \gamma_{0})\subset \Gamma$. Ideally, these marks should coincide precisely with the peaks of the probing index. For each interface $\Gamma$, 3 pairs of Cauchy data $(f,g)$ were provided, yielding three corresponding normalized probing indices $\widetilde{I}_{1}$. We emphasize that each probing index is generated from one single Cauchy pair.
The use of three boundary excitations in the numerical experiments is a
fusion strategy: each index provides partial
information, and the composite index aggregates the visible components from
different measurements. We conducted numerical tests using both full boundary data $f^{\epsilon}$ and $75\%$ partial data (utilizing measurements from only three sides of $\partial\Omega$). We added $4\%$ noise to the full data and $2\%$ noise to the partial data. We observe that the probing indices $\widetilde{I}_{1}$ can reliably capture at least one location of $D\subset\Gamma$ for each boundary flux $g$. By combining results from three distinct fluxes $g$, all inhomogeneous regions $D$ were successfully identified. The results for Problem I are presented in Section 5.1.

For Problem II, we assumed $\gamma(x)$ to be piecewise continuous on $\Gamma$. As in Problem I, three pairs of Cauchy data $(f,g)$ were provided for each interface, and we investigated both the full and $75\%$ partial data cases under identical noise levels ($4\%$ and $2\%$, respectively). Remarkably, despite the inherent ill-posedness of Problem II characterized by multiple unknown interfaces and limited noisy data, our DSM approach demonstrated robust performance. Each probing index $\widetilde{I}_{1}$ successfully captured at least one interface segment, indicated by larger index values near the true locations. By integrating the three individual results, a comprehensive and accurate reconstruction of $\Gamma$ was achieved. The results for Problem II are presented in Section 5.2.

For both Problem I and Problem II, we made use of a $181\times 181$ uniform mesh to perform our finite difference PDE solver to solve for the true solution $u$. To avoid inverse crime, we used a $121\times 121$ uniform mesh alternatively to solve $u_{0}$ and $\lambda$ using the interpolated value of $f^{\epsilon}$. The values of $\alpha$ were set be 10 and 1 for Problem I and Problem II, respectively. The settings for the examples are listed as follows.

In Section 5.1, we detail the specific numerical examples for Problem I. Throughout these examples, we set the Robin parameter to $\alpha=10$.\\
\textbf{Example 1:} Non-closed interface. $\Gamma  = [-0.766, 0.5]\times\{-0.5\}\cup \{-0.766\}\times[-0.5,0.233]$.
$$g_{1}(x,y) = |x+y+1|,\quad g_{2}(x,y) = 2,\quad g_{3}(x,y) = \cos(2\pi y)
$$
$$\gamma(x,y) = \left\{
\begin{aligned}
10+10x,& \quad y = -0.5, |x-0.2|\leq 0.05\\
1,& \quad\text{else}
\end{aligned}
\right.
$$
\textbf{Example 2:} Closed interface. $\Gamma = \partial\Omega_{1}$, where $\Omega_{1} = [-0.766, 0.5]\times[-0.766, 0.733]$.
$$g_{1}(x,y) = |x+y+1|,\quad g_{2}(x,y) = \sin(\pi x),\quad g_{3}(x,y) = \cos(2\pi y) - x
$$
$$\gamma(x,y) = \left\{
\begin{aligned}
5,& \quad y = -0.766, |x-0.2|\leq 0.05 \\
5,& \quad y = 0.733, |x-0.3|\leq 0.05\\
5,& \quad x = -0.766, |y-0.1|\leq 0.05\\
1,& \quad\text{else}
\end{aligned}
\right.
$$
\textbf{Example 3:} Non-closed interface. $\Gamma = [-0.733, 0.767]\times\{0.5\}$. $$g_{1}(x,y) = \sin(2\pi(x+y)),\quad g_{2}(x,y) = \sin(4\pi(x+y)),\quad g_{3}(x,y) = \sin(\pi(x+y))
$$
$$\gamma(x,y) = \left\{
\begin{aligned}
10,& \quad y = 0.5, |x-0.5|\leq 0.05 \\
10,& \quad y = 0.5, |x+0.5|\leq 0.05 \\
1,& \quad\text{else}
\end{aligned}
\right.
$$
\textbf{Example 4:} Non-closed interface. $\Gamma = [-0.266, 0.733]\times\{0.5\}\cup\{-0.5\}\times[-0.766, 0.233]$. $$g_{1}(x,y) = \sin(2\pi(x+y)),\quad g_{2}(x,y) = \sin(4\pi(x+y)),\quad g_{3}(x,y) = \sin(\pi(x+y))
$$
$$\gamma(x,y) = \left\{
\begin{aligned}
10,& \quad y = 0.5, |x-0.3|\leq 0.05 \\
10,& \quad x = -0.5, |y+0.2|\leq 0.05 \\
1,& \quad\text{else}
\end{aligned}
\right.
$$
\textbf{Example 5:} $75\%$ data available. The same setting as in Example 1. The unavailable data part $\partial\Omega\setminus L=\{1\}\times[-1,1]$.\\
\textbf{Example 6:} $75\%$ data available. The same setting as 
in Example 3. The unavailable data part $\partial\Omega\setminus L=\{1\}\times[-1,1]$.\\
\textbf{Example 7:} $75\%$ data available. The same setting as in Example 4. The unavailable data part $\partial\Omega\setminus L=\{1\}\times[-1,1]$.\\ 

In Section 5.2, we detail the specific numerical examples for Problem II. Throughout these examples, we set the Robin parameter to $\alpha=1$. For Examples 1 and 2, the interfaces consisted of the boundaries of small rectangles. In Example 1, $\gamma=2$ on all four edges of each rectangle. In Example 2, for each rectangle, $\gamma=2$ on the left and right edges, $\gamma=-1$ on the top edge, and $\gamma=3$ on the bottom edge; corner values are immaterial. For Examples 3 and 4, the interfaces were composed of several separate line segments.\\
\textbf{Example 1:} Closed interface. $\Gamma = \partial\Omega_{1}\cup  \partial\Omega_{2}\cup  \partial\Omega_{3}$, where $\Omega_{1} = [-0.083, 0.167]\times[-0.667, -0.417]$, $\Omega_{2} = [-0.667, -0.333]\times[0.333, 0.583]$, $\Omega_{3} = [0.333, 0.617]\times[0.333, 0.667]$; $\gamma=2$ on each $\partial\Omega_j$.
$$g_{1}(x,y) = 2,\quad g_{2}(x,y) = |x+y+1|,\quad g_{3}(x,y) = |\cos(x+y)|
$$
\textbf{Example 2:} Closed interface. $\Gamma = \partial\Omega_{1}\cup  \partial\Omega_{2}$, where $\Omega_{1} = [-0.667, -0.333]\times[-0.667, -0.333]$, $\Omega_{2} = [0.333, 0.667]\times[0, 0.333]$. For each $\Omega_j=[a_j,b_j]\times[c_j,d_j]$, $\gamma=2$ on $\{a_j,b_j\}\times[c_j,d_j]$, $\gamma=-1$ on $[a_j,b_j]\times\{d_j\}$, and $\gamma=3$ on $[a_j,b_j]\times\{c_j\}$.
$$g_{1}(x,y) = \sin(\pi(x+y)),\quad g_{2}(x,y) = |x+y+1|,\quad g_{3}(x,y) = \sin(2\pi(x+y))
$$
\textbf{Example 3:} Non-closed interface. $\Gamma = [-0.75, -0.167]\times\{-0.667\}\cup [0.2, 0.667]\times\{0.333\}$, $\gamma(x,y) = 5$ on $\Gamma$.
$$g_{1}(x,y) = x+y,\quad g_{2}(x,y) = \sin(\pi(x+y)),\quad g_{3}(x,y) = \sin(2\pi(x+y))
$$
\textbf{Example 4:} Non-closed interface. $\Gamma = \{-0.167\}\times[-0.667, 0.667]$.
$$\gamma(x,y) = \left\{
\begin{aligned}
-10,& \quad |y|\leq 0.4 \\
10,& \quad\text{else}
\end{aligned}
\right.$$
$$g_{1}(x,y) = 2,\quad g_{2}(x,y) = |\sin(\pi(x+y))|,\quad g_{3}(x,y) = |\cos(\pi(x+y))|
$$
\textbf{Example 5:} $75\%$ data available. The same setting as in Example 2. The unavailable data part $\partial\Omega\setminus L=[-1,1]\times\{1\}$.\\
\textbf{Example 6:} $75\%$ data available. The same setting as in Example 3. The unavailable data part $\partial\Omega\setminus L=[-1,1]\times\{1\}$.\\
\textbf{Example 7:} $75\%$ data available. The same setting as in Example 4. The unavailable data part $\partial\Omega\setminus L=\{1\}\times[-1,1]$.
\subsection{Problem I}
\begin{figure}[h]
\centering
\subfigure
{
    \begin{minipage}[b]{.11\linewidth}
        \centering
        \includegraphics[scale=0.27]{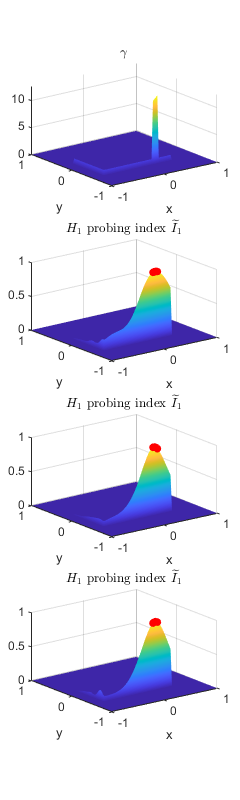}
    \end{minipage}
}
\subfigure
{
 	\begin{minipage}[b]{.11\linewidth}
        \centering
        \includegraphics[scale=0.27]{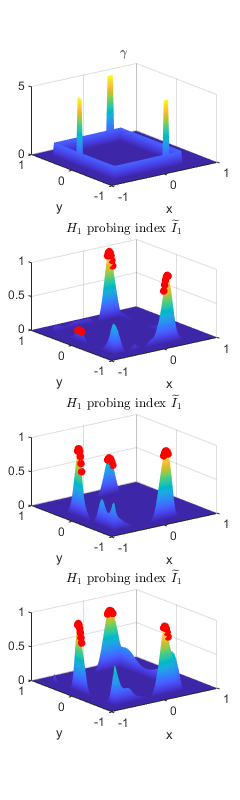}
    \end{minipage}
}
\subfigure
{
 	\begin{minipage}[b]{.11\linewidth}
        \centering
        \includegraphics[scale=0.27]{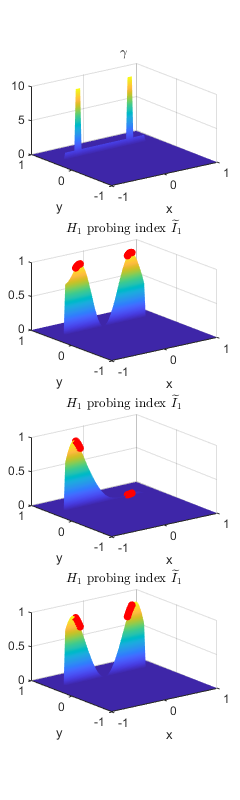}
    \end{minipage}
}
\subfigure
{
 	\begin{minipage}[b]{.11\linewidth}
        \centering
        \includegraphics[scale=0.27]{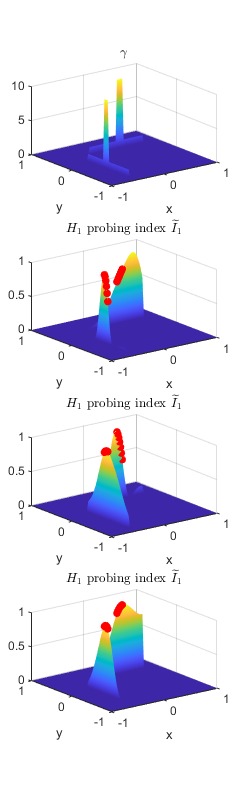}
    \end{minipage}
}
\subfigure
{
 	\begin{minipage}[b]{.11\linewidth}
        \centering
        \includegraphics[scale=0.27]{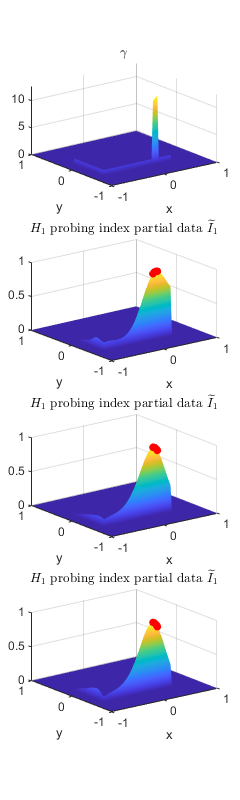}
    \end{minipage}
}
\subfigure
{
 	\begin{minipage}[b]{.11\linewidth}
        \centering
        \includegraphics[scale=0.27]{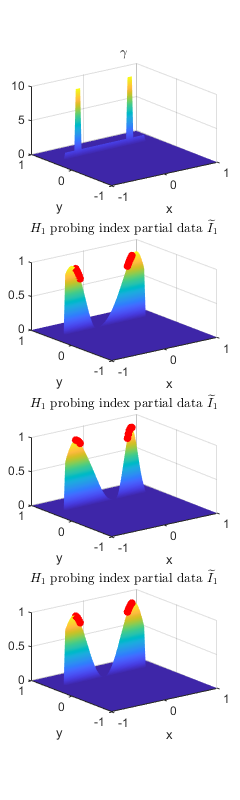}
    \end{minipage}
}
\subfigure
{
 	\begin{minipage}[b]{.11\linewidth}
        \centering
        \includegraphics[scale=0.27]{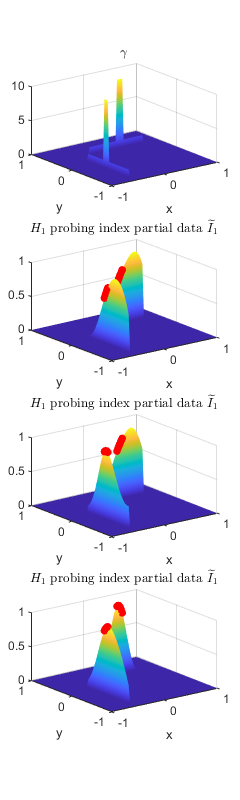}
    \end{minipage}
}
\caption{Problem I results: full data with $4\%$ noise (left 4 columns) and $75\%$ partial data with $2\%$ noise (right 3 columns). Rows correspond to different boundary fluxes $g$.} \label{problem1figure}
\end{figure}
For Problem I with full data available on the outer boundary, we present four numerical examples to demonstrate the capability of the proposed DSM in reconstructing flux jump locations on a known interface under four scenarios: (i) a single jump on a non-closed interface; (ii) three jumps on a closed interface; (iii) two jumps on a non-closed interface; and (iv) two jumps distributed across two non-closed interfaces. For the partial data case ($75\%$ available), we investigate scenarios (i), (iii), and (iv). As illustrated in Figure \ref{problem1figure}, a single pair of Cauchy data was sufficient for the $H^1$ probing index to accurately capture at least one jump location. More importantly, by combining the probing indices derived from multiple distinct pairs of Cauchy data, we successfully identified every jump location, thereby achieving a complete and highly accurate reconstruction of the defect distribution.

To quantitatively assess the performance of the proposed DSM and to facilitate an objective evaluation of the probing indices, we introduce two quantitative measures: Mean Localization Error (MLE) and Contrast-to-Noise Ratio (CNR).

To comprehensively evaluate the reconstruction performance across all available measurements, we first define the composite evaluation index $I_{\text{comp}}(x)$ as the sum of the squared normalized probing indices obtained from the three distinct pairs of Cauchy data. That is,
$$    I_{\text{comp}}(x) = \sum_{i=1}^3 |\widetilde{I}^{(i)}(x)|^2,$$
where $\widetilde{I}^{(i)}(x)$ represents either the refined or unrefined normalized index for the $i$-th measurement. Both the MLE and the CNR are evaluated based on this composite index.

The Mean Localization Error (MLE) evaluates the spatial accuracy of the overall reconstruction. Let $x_j^{\dagger}$ be the true geometric center of the $j$-th flux jump. The reconstructed peak $\hat{x}_j$ is identified by searching for the local maximum of the composite index $I_{\text{comp}}(x)$ within a neighborhood of radius $R_{\text{search}} = 0.2$:
$$    \hat{x}_j = \arg\max_{\substack{x \in \Gamma_h \\ \|x - x_j^{\dagger}\| \le 0.2}} I_{\text{comp}}(x).$$
With the defect radius fixed at $0.05$, the single-defect localization error and the mean localization error are defined by
\[
LE_j=\max(0,\|\hat{x}_j-x_j^{\dagger}\|-0.05),\qquad
MLE=\frac{1}{M}\sum_{j=1}^{M}LE_j .
\]

The Contrast-to-Noise Ratio (CNR) quantifies the sharpness of the composite discrete probing index against background artifacts. Let $\Gamma_h$ denote the set of discrete grid points on the known interface. We define $S_{\text{target}}$ as the subset of grid points on the interface located within a distance of $0.05$ from the true defect centers $x^{\dagger}$, i.e., $S_{\text{target}} = \{ x \in \Gamma_h \mid \|x - x^{\dagger}\| \le 0.05 \}$. The background set is defined as the remainder of the interface, $S_{\text{bg}} = \Gamma_h \setminus S_{\text{target}}$. We then define
\[
CNR=\frac{| \bar{I}_{\text{target}}-\bar{I}_{\text{bg}} |}{\sigma_{\text{bg}}},
\quad \text{where}\quad
\bar{I}_{\text{target}}=\frac{1}{|S_{\text{target}}|}
\sum_{x \in S_{\text{target}}}I_{\text{comp}}(x),\quad
\bar{I}_{\text{bg}}=\frac{1}{|S_{\text{bg}}|}
\sum_{x \in S_{\text{bg}}}I_{\text{comp}}(x).
\]
Here $\sigma_{\text{bg}}$ is the sample standard deviation of the discrete background index values:
\[
\sigma_{\text{bg}}
=\sqrt{\frac{1}{|S_{\text{bg}}|-1}
\sum_{x \in S_{\text{bg}}}(I_{\text{comp}}(x)-\bar{I}_{\text{bg}})^2}.
\]

Table \ref{tab:quant_metrics} summarized the quantitative metrics for Problem I under various configurations. The results confirmed that the $H^1$ probing index provides highly precise localizations, with MLE values achieving exactly zero in most full-data scenarios. Even under the severe condition of 25\% missing boundary data (Examples 5-7), the spatial deviation remained minimal (e.g., bounded by $0.0167$, which is the size of one mesh grid), while maintaining a sufficiently high CNR to effectively distinguish the defects.

\begin{table}[htbp]
\centering
\caption{Quantitative evaluation of $H^1$ probing index for Problem I}
\label{tab:quant_metrics}
\renewcommand{\arraystretch}{1.3} 
\resizebox{\textwidth}{!}{ 
\begin{tabular}{clccl}
\toprule
\textbf{Example} & \textbf{Data Strategy / Noise} & \textbf{MLE} & \textbf{CNR} & \textbf{Peak Deviation (True Center $\rightarrow$ Reconstructed Peak)} \\
\midrule
\multirow{1}{*}{\textbf{Example 1}} & Full Data / 4\% & 0.0000 & 2.0889 & $(0.2, -0.5) \rightarrow (0.2, -0.5)$ \\
\midrule
\multirow{3}{*}{\textbf{Example 2}} & \multirow{3}{*}{Full Data / 4\%} & \multirow{3}{*}{0.0000} & \multirow{3}{*}{3.8074} 
& $(0.2, -0.767) \rightarrow (0.217, -0.767)$ \\
& & & & $(0.3, 0.733) \rightarrow (0.283, 0.733)$ \\
& & & & $(-0.767, 0.1) \rightarrow (-0.767, 0.133)$ \\
\midrule
\multirow{2}{*}{\textbf{Example 3}} & \multirow{2}{*}{Full Data / 4\%} & \multirow{2}{*}{0.0000} & \multirow{2}{*}{1.1189} 
& $(0.5, 0.5) \rightarrow (0.55, 0.5)$ \\
& & & & $(-0.5, 0.5) \rightarrow (-0.55, 0.5)$ \\
\midrule
\multirow{2}{*}{\textbf{Example 4}} & \multirow{2}{*}{Full Data / 4\%} & \multirow{2}{*}{0.0000} & \multirow{2}{*}{1.7862} 
& $(0.3, 0.5) \rightarrow (0.283, 0.5)$ \\
& & & & $(-0.5, -0.2) \rightarrow (-0.5, -0.183)$ \\
\midrule
\multirow{1}{*}{\textbf{Example 5}} & 75\% Data / 2\% & 0.0000 & 2.1926 & $(0.2, -0.5) \rightarrow (0.183, -0.5)$ \\
\midrule
\multirow{2}{*}{\textbf{Example 6}} & \multirow{2}{*}{75\% Data / 2\%} & \multirow{2}{*}{0.0167} & \multirow{2}{*}{1.2849} 
& $(0.5, 0.5) \rightarrow (0.55, 0.5)$ \\
& & & & $(-0.5, 0.5) \rightarrow (-0.583, 0.5)$ \\
\midrule
\multirow{2}{*}{\textbf{Example 7}} & \multirow{2}{*}{75\% Data / 2\%} & \multirow{2}{*}{0.0167} & \multirow{2}{*}{1.1657} 
& $(0.3, 0.5) \rightarrow (0.35, 0.5)$ \\
& & & & $(-0.5, -0.2) \rightarrow (-0.5, -0.283)$ \\
\bottomrule
\end{tabular}
}
\end{table}

\subsection{Problem II}
\begin{figure}[h]
\centering
\subfigure
{
    \begin{minipage}[b]{.11\linewidth}
        \centering
        \includegraphics[scale=0.251]{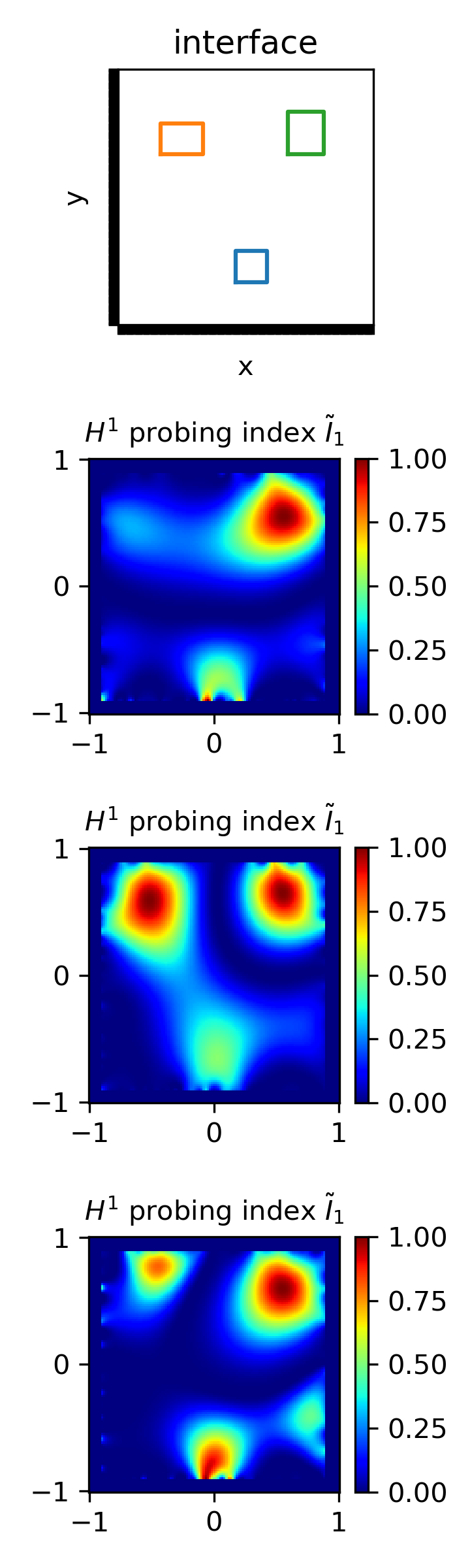}
    \end{minipage}
}
\subfigure
{
 	\begin{minipage}[b]{.11\linewidth}
        \centering
        \includegraphics[scale=0.251]{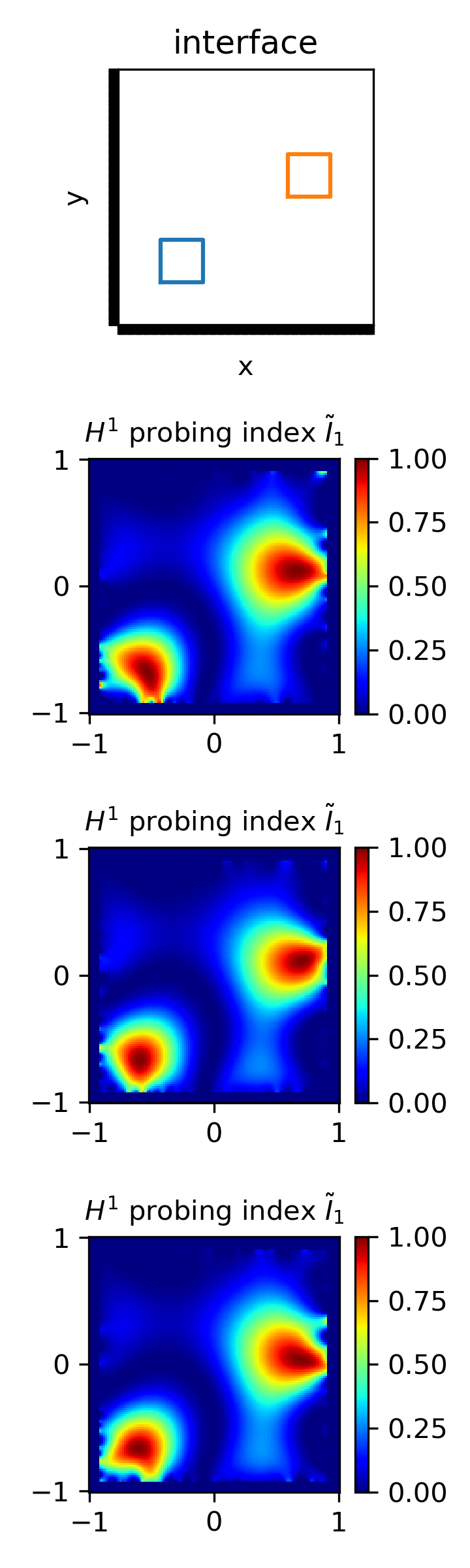}
    \end{minipage}
}
\subfigure
{
 	\begin{minipage}[b]{.11\linewidth}
        \centering
        \includegraphics[scale=0.255]{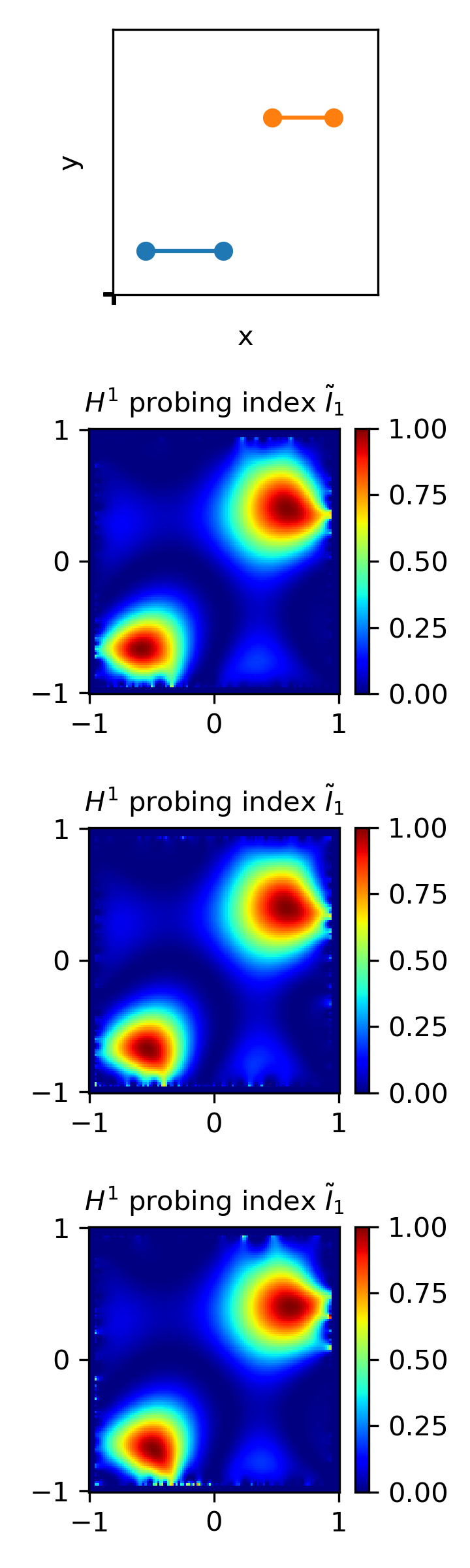}
    \end{minipage}
}
\subfigure
{
 	\begin{minipage}[b]{.11\linewidth}
        \centering
        \includegraphics[scale=0.255]{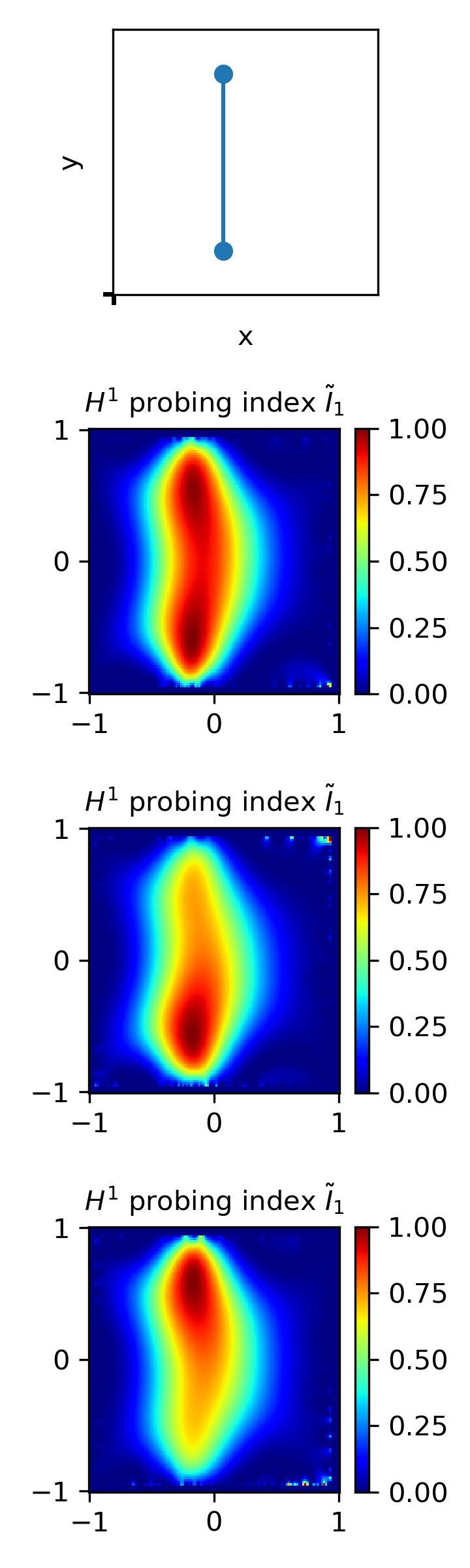}
    \end{minipage}
}
\subfigure
{
 	\begin{minipage}[b]{.11\linewidth}
        \centering
        \includegraphics[scale=0.255]{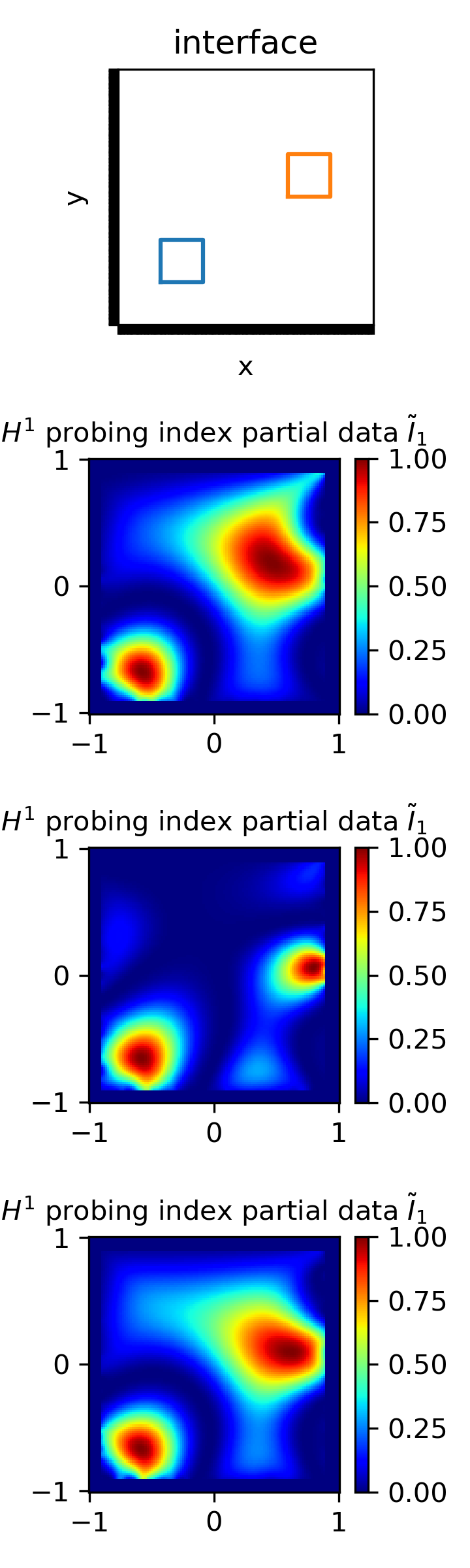}
    \end{minipage}
}
\subfigure
{
 	\begin{minipage}[b]{.11\linewidth}
        \centering
        \includegraphics[scale=0.255]{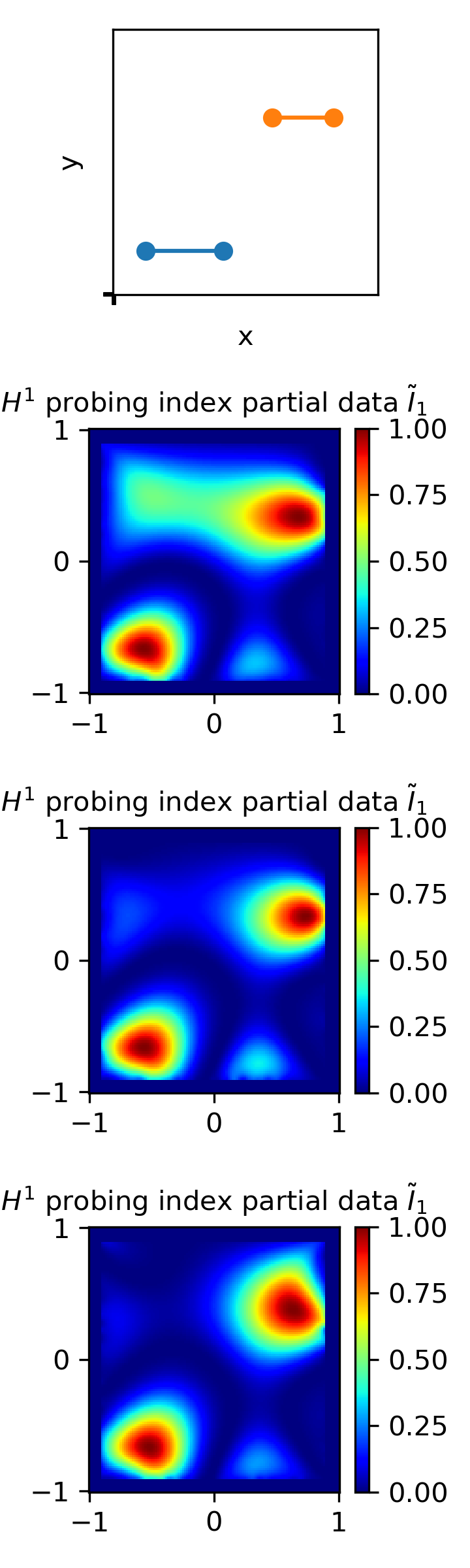}
    \end{minipage}
}
\subfigure
{
 	\begin{minipage}[b]{.11\linewidth}
        \centering
        \includegraphics[scale=0.255]{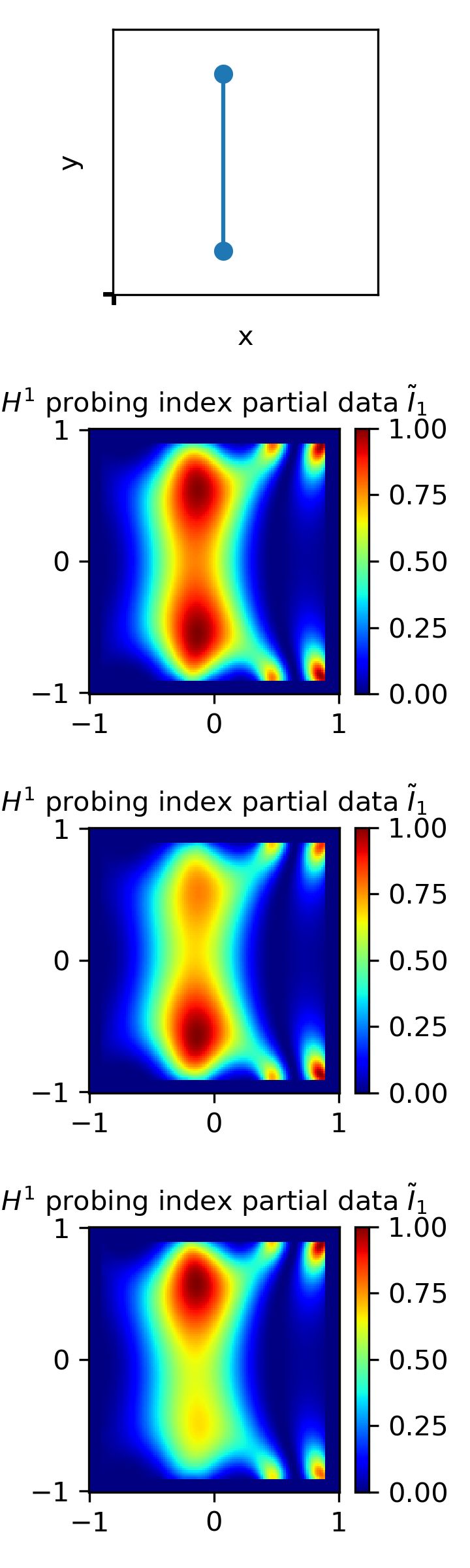}
    \end{minipage}
}
\caption{Problem II results: full data with $4\%$ noise (left 4 columns) and $75\%$ partial data with $2\%$ noise (right 3 columns). Rows correspond to different boundary fluxes $g$.}\label{problem2figure}
\end{figure}
For Problem II with full data available on the outer boundary, we present four numerical examples to demonstrate the capability of our proposed DSM for reconstructing the interface locations for four scenarios: (i) Three closed interfaces; (ii) Two closed interfaces; (iii) Two non-closed interfaces with relatively small size; (iv) One non-closed interface with relatively large size. For the partial data case ($75\%$ available), we investigated scenarios (i)(iii)(iv). We can see from Figure \ref{problem2figure} that the $H^1$ probing index function accurately captured the exact interface locations for scenarios (ii)(iii)(iv) with various boundary flux $g$. For the more complex scenario with three interfaces, the figure similarly demonstrated that for each boundary flux $g$, the $H^1$ probing index captured at least two interfaces. Combining these results yielded the desirable full identification of all jump locations.

To quantitatively evaluate Problem II, we adapted the CNR and MLE from Problem I. We utilized the same composite evaluation index $I_{\text{comp}}(x) = \sum_{i=1}^3 |\widetilde{I}^{(i)}(x)|^2$ (or $I_{\text{comp}}(x) = \sum_{i=1}^3 |\hat{I}^{(i)}(x)|^2$ for the refined case), which fuses the indices from the three distinct pairs of Cauchy data. We then extended the distance metric to 2D shapes by replacing the point-to-point distance $\|x - x_j^*\|$ with the shortest Euclidean distance to the true interface, $d(x, \Gamma^*) = \inf_{y \in \Gamma^*} \|x - y\|$. The CNR target region becomes $S_{\text{target}} = \{ x \in \Omega_h \mid d(x, \Gamma^*) \le \Delta x \}$, where the mesh size $\Delta x$ captures the numerical ridge thickness, leaving the remainder of the domain as the background $S_{\text{bg}} = \Omega_h \setminus S_{\text{target}}$.

For closed rectangular interfaces (Examples 1 and 2), the localization error for the $j$-th interface is $LE_j = d(\hat{x}_j, \Gamma_j^*)$, where the local peak $\hat{x}_j$ is identified within a $0.3$-neighborhood of the rectangle's geometric center $x_j^c$, i.e., $\hat{x}_j = \arg\max_{\substack{x \in \Omega_h \\ \|x - x_j^c\| \le 0.3}} I_{\text{comp}}(x)$. For open line segments (Examples 3 and 4), a single global peak cannot adequately capture trajectory distortions. Instead, we employed a cross-sectional ridge evaluation: the true line is uniformly divided into $10$ orthogonal bins. Within each bin, we searched for the local peak strictly within a perpendicular distance of $0.25$ from the true line. The Mean LE was then calculated by averaging the perpendicular distances of these local peaks to the true line, ensuring a rigorous assessment of the overall geometric alignment.

Table \ref{tab:quant_metrics_prob2} summarized the quantitative metrics for Problem II, corresponding to the seven configurations illustrated in Figure \ref{problem2figure}. The results confirmed that the $H^1$ probing index provides highly precise localizations for unknown interfaces. Under the full data scenario with $4\%$ noise (Examples 1-4), the spatial deviation remained exceptionally small, with several sub-interfaces achieving perfectly zero LE or being bounded tightly within $O(\Delta x)$ (where the mesh size $\Delta x \approx 0.0167$). Even under the severe condition of $25\%$ missing boundary data (Examples 5-7), the algorithm exhibited strong robustness; the maximum localization error slightly shifted but remained highly constrained, while maintaining a robust CNR (consistently above 2.0) across all scenarios to effectively distinguish the interfaces.

\begin{table}[htbp]
\centering
\caption{Quantitative evaluation of $H^1$ probing index for Problem II}
\label{tab:quant_metrics_prob2}
\renewcommand{\arraystretch}{1.3} 
\resizebox{\textwidth}{!}{ 
\begin{tabular}{clccl}
\toprule
\textbf{Example} & \textbf{Data Strategy / Noise} & \textbf{MLE} & \textbf{CNR} & \textbf{Localization Error (LE) Breakdown} \\
\midrule
\textbf{Example 1} & Full Data / 4\% & 0.0463 & 2.4378 & $LE_1=0.0667$, $LE_2=0.0722$, $LE_3=0.0000$ \\
\midrule
\textbf{Example 2} & Full Data / 4\% & 0.0083 & 2.9148 & $LE_1=0.0000$, $LE_2=0.0167$ \\
\midrule
\textbf{Example 3} & Full Data / 4\% & 0.0367 & 2.6465 & $LE_1=0.0083$, $LE_2=0.0650$ \\
\midrule
\textbf{Example 4} & Full Data / 4\% & 0.0250 & 2.1975 & $LE=0.0250$ \\
\midrule
\textbf{Example 5} & 75\% Data / 2\% & 0.0167 & 2.6423 & $LE_1=0.0000$, $LE_2=0.0333$ \\
\midrule
\textbf{Example 6} & 75\% Data / 2\% & 0.0117 & 2.6985 & $LE_1=0.0067$, $LE_2=0.0167$ \\
\midrule
\textbf{Example 7} & 75\% Data / 2\% & 0.0267 & 2.0682 & $LE=0.0267$ \\
\bottomrule
\end{tabular}
}
\end{table}

The numerical results in this paper demonstrated that the DSM delivers robust performance across a variety of problem settings, highlighting its strong versatility. Moreover, its computational cost was significantly lower than that of the iterative methods discussed in the literature. The experiments further indicated that the DSM can generate accurate initial guesses for iterative schemes, effectively addressing a major challenge faced by most iterative methods. These findings underscored the considerable potential of combining DSM with iterative approaches to enhance computational efficiency and accuracy.
\subsection{The Refinement approach}
We applied the same procedure, solving \eqref{con:model_problem} with the Neumann boundary condition \(\partial u/\partial n=g\) on \(\partial\Omega\), to obtain the noisy boundary measurement $f^{\epsilon}$ on $\partial\Omega$. We used a uniform mesh of size $181\times 181$ in the finite difference method to compute the true solution $u$. To avoid the inverse crime, we then utilized a different uniform mesh of size $121\times 121$ to execute the iterative algorithm presented in Section 4, sequentially solving for $u_{1}$, $\lambda^{1}$, and $\lambda^{2}$. The noise level was set to $15\%$ for these tests. The numerical results demonstrate that the refined probing index $\hat{I}_{0}$ defined in (\ref{index-refine-def}) outperforms the original probing index $I_{0}$. This refinement approach was applied to both Problem I and Problem II. For each example, three pairs of Cauchy data $(f, g)$ were provided, yielding three corresponding pairs of normalized probing index functions $I_0$ and $\hat{I}_0$. The graphs of $I_0$ and $\hat{I}_0$ are displayed in the first and second columns, respectively. The specific settings for these examples are detailed below.\\~\\
\textbf{Problem I}: Known interface. We set $\delta = 0.1$, $\alpha=10$.\\
\textbf{Example 1:} The same setting as in Example 2, Section 5.1.\\
\textbf{Example 2:} $\Gamma = \partial\Omega_{1}$, where $\Omega_{1} = [-0.5, 0.5]\times[-0.5, 0.5]$. 
$$\gamma(x,y) = \left\{
\begin{aligned}
100,& \quad y = 0.5, |x+0.1|\leq 0.05, \\
100,& \quad x = -0.5, |y+0.25|\leq 0.05,\\
100,& \quad x = 0.5, |y+0.25|\leq 0.05,\\
1,& \quad\text{else.}
\end{aligned}
\right.
$$
$$g_1(x,y) = 2, g_2(x,y) = 3\cos(x)-1, g_3(x,y) = |\cos(\pi(x+y))|.
$$
\textbf{Example 3:} $\Gamma = \partial\Omega_{1}$, where $\Omega_{1} = [-0.5, 0.5]\times[-0.5, 0.5]$. 
$$\gamma(x,y) = \left\{
\begin{aligned}
100,& \quad y = -0.5, |x+0.1|\leq 0.05, \\
100,& \quad x = -0.5, |y-0.2|\leq 0.05,\\
100,& \quad x = 0.5, |y-0.3|\leq 0.05,\\
1,& \quad\text{else.}
\end{aligned}
\right.
$$
$$g_1(x,y) = 2, g_2(x,y) = 3\cos(x)-1, g_3(x,y) = |\cos(\pi(x+y))|.
$$
\textbf{Problem II}: Unknown interface. \\
\textbf{Example 1:} The same interface $\Gamma$ as in Example 2, Section 5.2, $\gamma = 5$ on $\Gamma$. We set $\delta = 0.1$, $\alpha=10$.
$$g_1(x,y) = 2,\quad g_2(x,y) = 3\cos(x)-1,\quad g_3(x,y) = |\cos(\pi(x+y))|.
$$
\textbf{Example 2:} The same interface $\Gamma$ and $\gamma$ as in Example 4, Section 5.2. We set $\delta = 1$, $\alpha=1$.
$$g_1(x,y) = 2,\quad g_2(x,y) = |\sin(\pi(x+y))|,\quad g_3(x,y) = |\cos(\pi(x+y))|.
$$
\textbf{Example 3:} $\Gamma = \partial\Omega_{1}\cup\partial\Omega_{2}$, where $\Omega_{1} = [0.5, 0.75]\times[-0.75, -0.5]$, $\Omega_{2} = [0.5, 0.75]\times[0.5, 0.75]$, $\gamma=3$ on $\Gamma$. We set $\delta = 0.1$, $\alpha=10$.
$$g_1(x,y) = 2,\quad g_2(x,y) = 3\cos(x)-1,\quad g_3(x,y) = |\cos(\pi(x+y))|.
$$
\begin{figure}[h]
\centering
\subfigure
{
    \begin{minipage}[b]{.3\linewidth}
        \centering
        \includegraphics[scale=0.3]{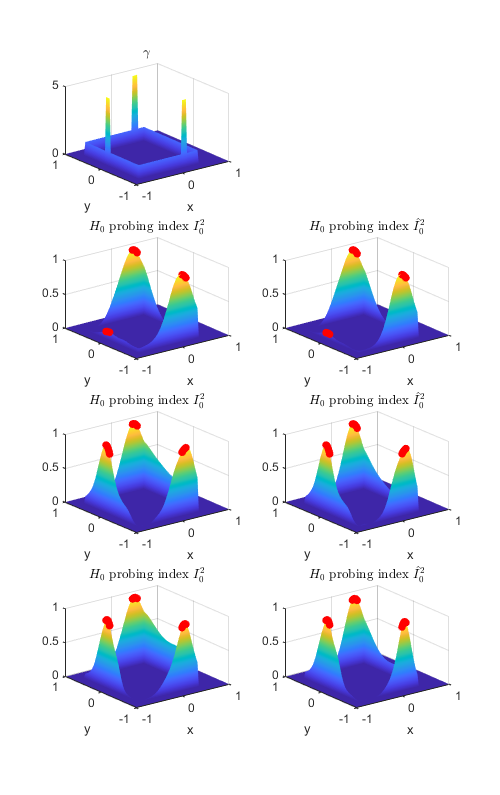}
    \end{minipage}
}
\subfigure
{
 	\begin{minipage}[b]{.3\linewidth}
        \centering
        \includegraphics[scale=0.3]{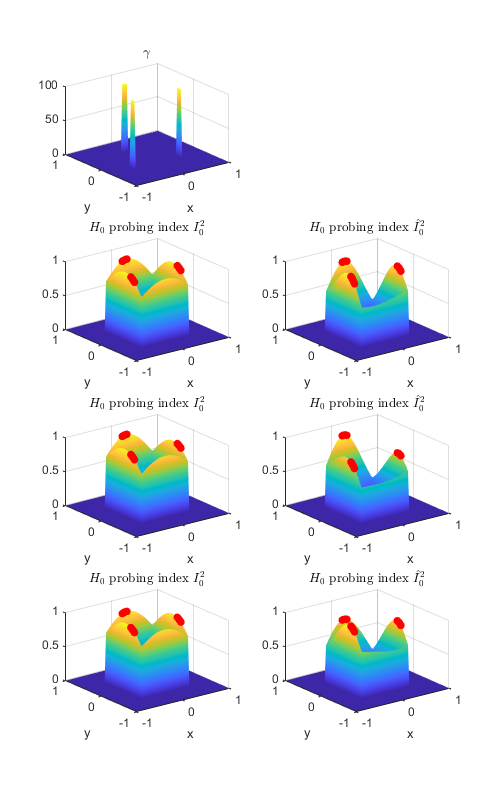}
    \end{minipage}
}
\subfigure
{
 	\begin{minipage}[b]{.3\linewidth}
        \centering
        \includegraphics[scale=0.3]{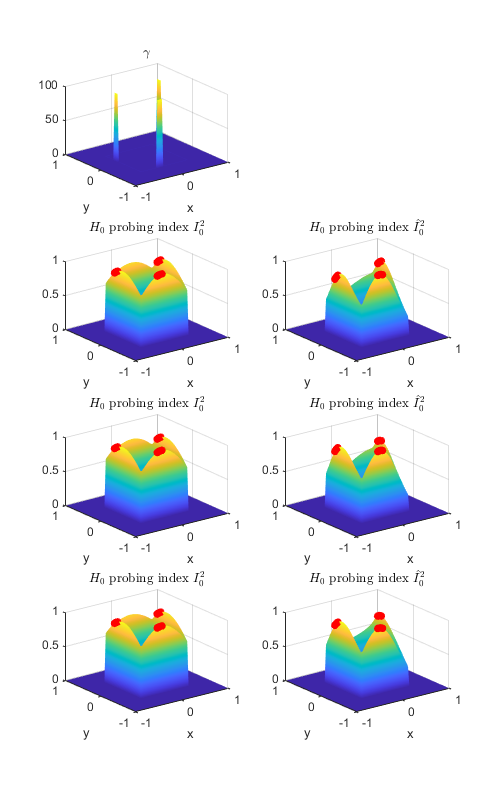}
    \end{minipage}
}
\caption{Problem I results with $15\%$ noise.}\label{refineproblem1}
\end{figure}
For Problem I with full data available on the outer boundary, we present 3 numerical examples to demonstrate the efficacy of our refinement approach. Figure \ref{refineproblem1} shows that in Example 1, both the original $H^0$ index $I_0$ and the refined index $\hat{I}_0$ successfully identified the exact jump locations (marked in red). However, in the next two examples, while $I_0$ produced a spurious jump that does not exist, the refined $\hat{I}_0$ remained accurate and correctly detected all true jump locations.

The quantitative improvements are detailed in Table \ref{tab:refinement_metrics}. The refinement approach not only consistently enhanced the CNR across all examples but also significantly reduced the MLE when dealing with complex multi-jump scenarios.

\begin{table}[h]
\centering
\caption{Quantitative evaluation of $I_0$ and $\hat{I}_0$ for Problem I.}
\label{tab:refinement_metrics}
\renewcommand{\arraystretch}{1.2}
\begin{tabular}{ccccc}
\toprule
Example & Index & MLE & CNR & True Centers $\Rightarrow$ Reconstructed Peaks \\
\midrule
\multirow{6}{*}{1} & \multirow{3}{*}{$I_0$} & \multirow{3}{*}{0.0000} & \multirow{3}{*}{1.9405} 
& $(0.2, -0.767) \Rightarrow (0.217, -0.767)$ \\
& & & & $(0.3, 0.733) \Rightarrow (0.267, 0.733)$ \\
& & & & $(-0.767, 0.1) \Rightarrow (-0.767, 0.133)$ \\
\cmidrule{2-5}
& \multirow{3}{*}{$\hat{I}_0$} & \multirow{3}{*}{0.0000} & \multirow{3}{*}{2.2723} 
& $(0.2, -0.767) \Rightarrow (0.217, -0.767)$ \\
& & & & $(0.3, 0.733) \Rightarrow (0.267, 0.733)$ \\
& & & & $(-0.767, 0.1) \Rightarrow (-0.767, 0.133)$ \\
\midrule
\multirow{6}{*}{2} & \multirow{3}{*}{$I_0$} & \multirow{3}{*}{0.0944} & \multirow{3}{*}{0.9256} 
& $(-0.1, 0.5) \Rightarrow (-0.05, 0.5)$ \\
& & & & $(-0.5, -0.25) \Rightarrow (-0.5, -0.05)$ \\
& & & & $(0.5, -0.25) \Rightarrow (0.5, -0.067)$ \\
\cmidrule{2-5}
& \multirow{3}{*}{$\hat{I}_0$} & \multirow{3}{*}{0.0389} & \multirow{3}{*}{1.3706} 
& $(-0.1, 0.5) \Rightarrow (-0.117, 0.5)$ \\
& & & & $(-0.5, -0.25) \Rightarrow (-0.5, -0.1)$ \\
& & & & $(0.5, -0.25) \Rightarrow (0.5, -0.183)$ \\
\midrule
\multirow{6}{*}{3} & \multirow{3}{*}{$I_0$} & \multirow{3}{*}{0.0944} & \multirow{3}{*}{0.9062} 
& $(-0.1, -0.5) \Rightarrow (-0.05, -0.5)$ \\
& & & & $(-0.5, 0.2) \Rightarrow (-0.5, 0.017)$ \\
& & & & $(0.5, 0.3) \Rightarrow (0.5, 0.1)$ \\
\cmidrule{2-5}
& \multirow{3}{*}{$\hat{I}_0$} & \multirow{3}{*}{0.0389} & \multirow{3}{*}{1.3196} 
& $(-0.1, -0.5) \Rightarrow (-0.117, -0.5)$ \\
& & & & $(-0.5, 0.2) \Rightarrow (-0.5, 0.05)$ \\
& & & & $(0.5, 0.3) \Rightarrow (0.5, 0.233)$ \\
\bottomrule
\end{tabular}
\end{table}

\begin{figure}[h]
\centering
\subfigure
{
 	\begin{minipage}[b]{.3\linewidth}
        \centering
        \includegraphics[scale=0.3]{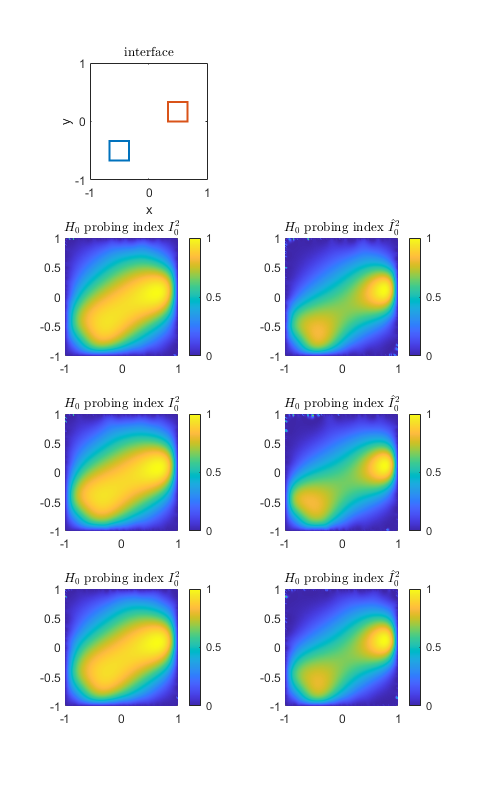}
    \end{minipage}
}
\subfigure
{
 	\begin{minipage}[b]{.3\linewidth}
        \centering
        \includegraphics[scale=0.3]{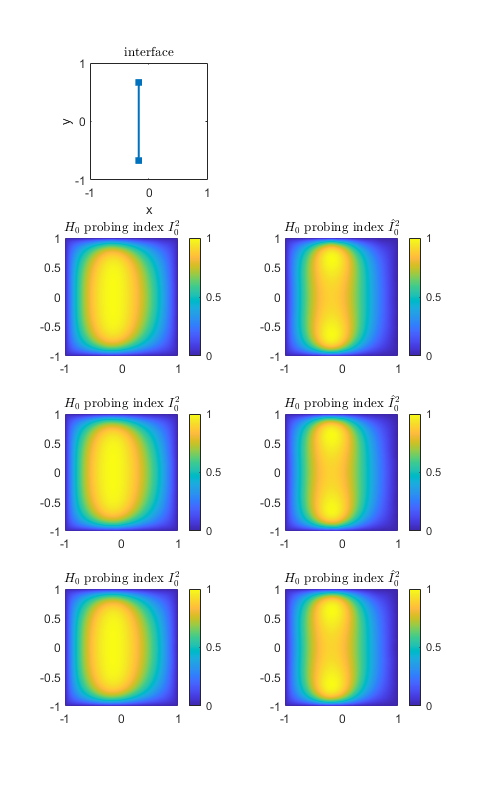}
    \end{minipage}
}
\subfigure
{
 	\begin{minipage}[b]{.3\linewidth}
        \centering
        \includegraphics[scale=0.3]{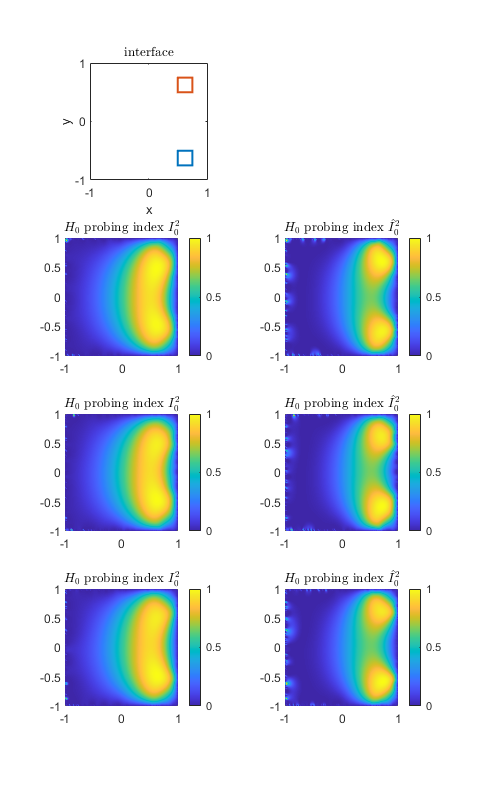}
    \end{minipage}
}
\caption{Problem II results with $15\%$ noise.}\label{refineproblem2}
\end{figure}
For Problem II with full boundary data, we present three numerical examples to validate the effectiveness of our refinement approach, covering scenarios involving multiple closed interfaces and a relatively large non-closed interface. As shown in Figure \ref{refineproblem2}, the original $H^0$ probing index $I_0$ failed to distinguish between different interfaces in the case of multiple small closed interfaces. In contrast, our refined $\hat{I}_0$ significantly improved performance by clearly resolving individual interfaces. Moreover, for the large non-closed interface case, 
$\hat{I}_0$ achieved substantially more accurate reconstruction results. 

These visual improvements were quantitatively detailed in Table \ref{tab:refine_problem2_metrics}. The refinement approach consistently enhanced the CNR across all examples, indicating a sharper index profile and stronger suppression of background artifacts. Notably, in Example 3, the refined index $\hat{I}_0$ achieved perfect localization with the MLE dropping to 0.0000, while in Example 2, the MLE was significantly reduced from 0.0167 to 0.0033. Even in cases where the overall MLE remained numerically similar (such as Example 1), the elevated CNR confirmed that the refined method yields a more robust and discernible reconstruction against high noise levels.

\begin{table}[h]
\centering
\caption{Quantitative evaluation of $I_0$ and $\hat{I}_0$ for Problem II.}
\label{tab:refine_problem2_metrics}
\renewcommand{\arraystretch}{1.2}
\begin{tabular}{ccccc}
\toprule
Example & Index & MLE & CNR & Localization Error (LE) Breakdown \\
\midrule
\multirow{2}{*}{1} & $I_0$ & 0.0420 & 1.2702 & $LE_1 = 0.0840$, $LE_2 = 0.0000$ \\
& $\hat{I}_0$ & 0.0420 & 1.5421 & $LE_1 = 0.0000$, $LE_2 = 0.0840$ \\
\midrule
\multirow{2}{*}{2} & $I_0$ & 0.0167 & 1.4180 & $LE = 0.0167$ \\
& $\hat{I}_0$ & 0.0033 & 1.4987 & $LE = 0.0033$ \\
\midrule
\multirow{2}{*}{3} & $I_0$ & 0.0333 & 1.6278 & $LE_1 = 0.0500$, $LE_2 = 0.0500$ \\
& $\hat{I}_0$ & 0.0000 & 2.1307 & $LE_1 = 0.0000$, $LE_2 = 0.0000$ \\
\bottomrule
\end{tabular}
\end{table}

The most significant advantage of our refinement approach lies in its ability to dramatically enhance the noise robustness of the DSM for both Problem I and Problem II, while maintaining the same level of accuracy.

\section{Conclusion}
In this work, we developed a novel Direct Sampling Method (DSM) to address two distinct yet highly ill-posed inverse interface problems for the Laplace equation. The key innovations of this paper lie in two aspects: (i) an optimized boundary condition selection strategy for the reference system, and (ii) a newly proposed refinement approach. Specifically, our numerical evaluations revealed that the $H^{0}$ probing index function is significantly more robust to noise than the $H^{1}$ index function, albeit at the cost of reduced localization accuracy. This crucial observation motivated the development of our iterative refinement scheme, which successfully enhances the accuracy of the $H^{0}$ index while preserving its inherent robustness. Theoretical analysis and numerical experiments demonstrate that this approach maintains the fundamental advantages of the DSM, namely, direct implementation, computational efficiency, and high precision. Remarkably, our method achieved reliable reconstructions using very limited pairs of Cauchy data, thereby bypassing the need for extensive boundary measurements.

This framework not only resolves the targeted problems but also provides a methodological reference for addressing broader classes of complex inverse interface problems. Beyond its standalone efficacy, this framework serves as a valuable pre-processing tool for iterative inversion algorithms. By providing high-quality initial guesses for unknown interfaces or flux jump distributions, our DSM-based approach can significantly accelerate convergence in gradient-based optimization methods. Looking ahead, the integration of our refinement-enhanced DSM with deep learning techniques holds promising potential for tackling more complex PDE-based inverse problems. Such hybrid methodologies could substantially reduce computational costs in real-world applications where data scarcity and noise resilience remain critical challenges.

\section{Appendix: Proofs of auxiliary results}
\subsection{Proof of Lemma \ref{lemma-wellpo}}
\label{app:lemma-wellpo-proof}
\begin{proof}
We work with the weak formulation \eqref{eq:weak-forward-interface}. Define
\[
    a(w,v)
    :=
    \int_\Omega \nabla w\cdot\nabla v\,dx
    +
    \int_\Gamma wv\,ds,
    \qquad w,v\in H^1(\Omega).
\]
The boundedness of \(a(\cdot,\cdot)\) follows from the Cauchy--Schwarz
inequality and the continuity of the trace operator
\(H^1(\Omega)\to L^2(\Gamma)\).

We first show that \(a(\cdot,\cdot)\) is coercive on \(H^1(\Omega)\). Suppose
otherwise. Then there exists a sequence \(\{w_n\}\subset H^1(\Omega)\) such
that $\|w_n\|_{H^1(\Omega)}=1$ and $a(w_n,w_n)\to0$.
Passing to a subsequence, we may assume that \(w_n\rightharpoonup w\) weakly
in \(H^1(\Omega)\). Since $\Omega$ is a bounded Lipschitz domain, by the Rellich-Kondrachov theorem, the Sobolev embedding $H^1(\Omega) \hookrightarrow L^2(\Omega)$ is compact, which implies $w_n \to w$ strongly in $L^2(\Omega)$.  Since
\[
    a(w_n,w_n)
    =
    \|\nabla w_n\|_{L^2(\Omega)}^2
    +
    \|w_n\|_{L^2(\Gamma)}^2
    \to0,
\]
we have \(\nabla w_n\to0\) strongly in \(L^2(\Omega)\). Hence
\(w_n\to w\) strongly in \(H^1(\Omega)\), and \(\nabla w=0\) in \(\Omega\).
Since \(\Omega\) is connected, \(w\) is a constant. Moreover,
\(\|w_n\|_{L^2(\Gamma)}\to0\), and the continuity of the trace operator gives
\(w|_\Gamma=0\). Since \(\Gamma\) has positive length, this constant must be
zero. Therefore \(w=0\), which contradicts
$\|w\|_{H^1(\Omega)}
    =
    \lim_{n\to\infty}\|w_n\|_{H^1(\Omega)}
    =
    1$.
Thus there exists \(C_a>0\) such that
\[
    a(w,w)\ge C_a\|w\|_{H^1(\Omega)}^2,
    \qquad \forall w\in H^1(\Omega).
\]
Consequently, \(a(\cdot,\cdot)\) defines an equivalent inner product on
\(H^1(\Omega)\). By the Riesz representation theorem, we define bounded linear operators
\(M,K:H^1(\Omega)\to H^1(\Omega)\) by
\[
    a(Mw,v)=\int_\Gamma wv\,ds,
    \qquad
    a(Kw,v)=\int_\Gamma \rho wv\,ds,
    \qquad \forall v\in H^1(\Omega).
\]
The boundary functional
\[
    F_g(v):=\int_{\partial\Omega}gv\,ds
\]
is bounded on \(H^1(\Omega)\), since \(g\in L^2(\partial\Omega)\) and the
trace operator \(H^1(\Omega)\to L^2(\partial\Omega)\) is continuous. Hence
there exists a unique \(b_g\in H^1(\Omega)\) such that
\[
    a(b_g,v)=F_g(v),
    \qquad \forall v\in H^1(\Omega).
\]
With these definitions, the weak formulation
\eqref{eq:weak-forward-interface} is equivalent to
\begin{equation}
    (I-M-\lambda K)u=b_g.
    \label{eq:fredholm-interface}
\end{equation}

We next show that \(M\) and \(K\) are compact operators on \(H^1(\Omega)\).
It suffices to prove this for \(K\), since the proof for \(M\) is identical
with \(\rho\equiv1\). Let \(\{w_n\}\) be bounded in \(H^1(\Omega)\). We first recall why the trace map from \(H^1(\Omega)\) to \(L^2(\Gamma)\)
is compact. Since \(\Gamma\) is a closed Lipschitz interface and partitions
\(\Omega\) into two Lipschitz subdomains, the standard trace theorem applied
on either side of \(\Gamma\) gives a continuous restriction map $H^1(\Omega)\longrightarrow H^{1/2}(\Gamma)$.
More precisely, for every \(w\in H^1(\Omega)\), its trace on \(\Gamma\) is
well defined and satisfies $\|w\|_{H^{1/2}(\Gamma)}
    \le C \|w\|_{H^1(\Omega)}$.
Moreover, since \(\Gamma\) is a compact Lipschitz curve, the Rellich-Kondrachov compact embedding yields $H^{1/2}(\Gamma)\hookrightarrow L^2(\Gamma)$
compactly; see, for instance, \cite[Corollary 7.2]{di2012hitchhikers}.
Consequently, the composed trace operator $H^1(\Omega)\longrightarrow L^2(\Gamma)$
is compact. Therefore, after passing to a subsequence, still denoted by \(\{w_n\}\), the
traces of \(w_n\) converge strongly in \(L^2(\Gamma)\). In particular,
\[
    \|w_n-w_m\|_{L^2(\Gamma)}\to0
    \qquad \text{as } n,m\to\infty
\]
along this subsequence. For this subsequence, using the definition of \(K\), the coercivity of
\(a(\cdot,\cdot)\), and the trace inequality, we obtain
\[
\begin{aligned}
C_a\|Kw_n-Kw_m\|_{H^1(\Omega)}^2
&\le
a(Kw_n-Kw_m,Kw_n-Kw_m)  \\
&=
\int_\Gamma \rho(w_n-w_m)(Kw_n-Kw_m)\,ds  \\
&\le
\|\rho\|_{L^\infty(\Gamma)}
\|w_n-w_m\|_{L^2(\Gamma)}
\|Kw_n-Kw_m\|_{L^2(\Gamma)}  \\
&\le
C\|w_n-w_m\|_{L^2(\Gamma)}
\|Kw_n-Kw_m\|_{H^1(\Omega)} .
\end{aligned}
\]
If \(Kw_n-Kw_m\neq0\), dividing by
\(\|Kw_n-Kw_m\|_{H^1(\Omega)}\) gives
\[
    \|Kw_n-Kw_m\|_{H^1(\Omega)}
    \le
    C\|w_n-w_m\|_{L^2(\Gamma)}.
\]
The same estimate is trivial when \(Kw_n-Kw_m=0\). Since the right-hand side
tends to zero, \(\{Kw_n\}\) is Cauchy in \(H^1(\Omega)\), and hence converges
strongly. Thus \(K\) is compact on \(H^1(\Omega)\). Similarly, \(M\) is
compact.

We now pass to the complexified space \(H^1(\Omega;\mathbb C)\), using the
same notation for the complexifications of \(M\) and \(K\). Define
\[
    \mathcal A(\lambda):=M+\lambda K,
    \qquad \lambda\in\mathbb C.
\]
Then \(\mathcal A(\lambda)\) is an analytic family of compact operators. To
apply the analytic Fredholm theorem, it remains to show that
\(I-\mathcal A(\lambda)\) is invertible for at least one value of
\(\lambda\). We choose \(\lambda=i\).

Suppose \(w\in H^1(\Omega;\mathbb C)\) satisfies $(I-M-iK)w=0$. Equivalently,
\[
    \int_\Omega \nabla w\cdot\nabla \bar v\,dx
    =
    i\int_\Gamma \rho w\bar v\,ds,
    \qquad \forall v\in H^1(\Omega;\mathbb C).
\]
Taking \(v=w\), we obtain
\[
    \int_\Omega |\nabla w|^2\,dx
    =
    i\int_\Gamma \rho |w|^2\,ds.
\]
Since \(\rho\) is real-valued, the right-hand side is purely imaginary.
Taking real parts gives
\[
    \int_\Omega |\nabla w|^2\,dx=0.
\]
Hence \(w=c\) is a constant. Substituting this back into the weak formulation
gives
\[
    ic\int_\Gamma \rho\bar v\,ds=0,
    \qquad \forall v\in H^1(\Omega;\mathbb C).
\]
Since \(\rho\not\equiv0\) in \(L^2(\Gamma)\), and the traces of
\(H^1(\Omega)\) on \(\Gamma\) are dense in \(L^2(\Gamma)\), there exists
\(v\in H^1(\Omega;\mathbb C)\) such that
\[
    \int_\Gamma \rho\bar v\,ds\ne0.
\]
Therefore \(c=0\), and hence \(w=0\). Thus \(I-\mathcal A(i)\) is injective.
Since \(\mathcal A(i)\) is compact, the Fredholm alternative implies that
\(I-\mathcal A(i)\) is boundedly invertible. By the analytic Fredholm theorem \cite[Theorem 8.26]{colton1998inverse} , \((I-\mathcal A(\lambda))^{-1}\) exists
as a bounded operator for all \(\lambda\in\mathbb C\) except possibly on a
discrete set \(\widetilde\Sigma\subset\mathbb C\) with no finite accumulation
points. Let
\[
    \Sigma:=\widetilde\Sigma\cap\mathbb R.
\]
Then \(\Sigma\subset\mathbb R\) is discrete with no finite accumulation
points. For every \(\lambda\in\mathbb R\setminus\Sigma\), the operator
\(I-M-\lambda K\) is boundedly invertible. Therefore
\eqref{eq:fredholm-interface} admits the unique solution
\[
    u=(I-M-\lambda K)^{-1}b_g
    \in H^1(\Omega).
\]
This proves the generic well-posedness of the interface problem.

\end{proof}

\subsection{Verification of the almost orthogonal property}
\label{app:almost-orthogonal-proof}
The Fourier representation of the probing kernel has been derived in \eqref{kernel_Fourier}. Here we give the expanded algebraic verification of the radial monotonicity estimates used to justify the almost orthogonal property for \(\alpha=1\). The proof uses the standard series for \(\log(1-t)\) and \(\operatorname{Li}_2(t)=\sum_{n=1}^{\infty}t^n/n^2\), and Sturm's theorem is used to confirm the signs of the resulting polynomials; see, for example, \cite[Sec.~25.12]{NIST:DLMF} and \cite[Sec.~2.2.2]{basu2006algorithms}.

When $\theta_x = \theta_z$, the sign-normalized kernel used in the localization analysis is given by
\begin{equation}
    K(x,z) = d\frac{\sum_{n=1}^{\infty}\frac{n^{2}}{n+\alpha}r_{x}^{n}r_{z}^{n}}{\left(\frac{r_{x}^{2}(1+r_{x}^{2})}{(1-r_{x}^{2})^{3}}\right)^{1/4}\left(\sum_{n\in\mathbb{Z}}\frac{n^{2}}{(|n|+\alpha)^{2}}r_{x}^{2|n|}\right)^{1/4}}.
\end{equation}
where $d>0$. This sign normalization does not affect peak locations, and $K(x,z)^4$ is proportional to the following function.

\begin{theorem}
Let \(0.005<c<1\) and \(c<x<1\), and set
\[
f(x)=
\frac{\left(\sum_{n=1}^{\infty}\frac{n^{2}}{n+1}c^{n}x^{n}\right)^{4}}
{\left(\frac{x^{2}(1+x^{2})}{(1-x^{2})^{3}}\right)
\left(\sum_{n=1}^{\infty}\frac{n^{2}}{(n+1)^{2}}x^{2n}\right)}.
\]
Define
\[
\begin{aligned}
A_1(c,x)&=\frac{7}{(1-x^2)(1-cx)}
+\frac{7}{2(1+x^2)(1+cx)}+\frac12>0,\\
A_2(c,x)&=\frac{8}{(1-cx)(1-x^2)}
+\frac{4(cx^3-1)}{(1+x^4)(1+c^2x^2)}
+5+\frac{2}{(1+cx)(1+x^2)}>0.
\end{aligned}
\]
Then
\[
(\log f(x))'
<
\begin{cases}
(c-x)A_1(c,x)<0, & 0.005<c<0.27,\ x<0.27^2/c,\\
(c-x)A_2(c,x)<0, & \text{otherwise}.
\end{cases}
\]
Consequently, \(f\) is strictly decreasing on \([c,1)\).
\end{theorem}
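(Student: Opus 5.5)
The plan is to obtain closed forms for the three series, differentiate \(\log f\), and reduce the sign question to explicit polynomial or rational inequalities in \((c,x)\).

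\textbf{Closed forms.} First, \(\frac{n^2}{n+1}=n-1+\frac{1}{n+1}\) gives, with \(t=cx\),
\[
N(t):=\sum_{n\ge1}\frac{n^2}{n+1}t^n=\frac{t}{(1-t)^2}-\frac{t}{1-t}-1-\frac{\log(1-t)}{t}.
\]
Second, \(\frac{n^2}{(n+1)^2}=1-\frac{2}{n+1}+\frac{1}{(n+1)^2}\) gives, with \(s=x^2\),
\[
D(s):=\sum_{n\ge1}\frac{n^2}{(n+1)^2}s^n=\frac{s}{1-s}+\frac{2(s+\log(1-s))}{s}+\frac{\operatorname{Li}_2(s)-s}{s}.
\]
Third, writing \(E(x)=\frac{x^2(1+x^2)}{(1-x^2)^3}\), we have
\[
\log f=4\log N(cx)-\log E(x)-\log D(x^2),
\]
so
\[
(\log f)'=\frac{4cN'(cx)}{N(cx)}-\Big(\frac2x+\frac{2x}{1+x^2}+\frac{6x}{1-x^2}\Big)-\frac{2xD'(x^2)}{D(x^2)}.
\]
The factor \((c-x)\) should come from symmetry. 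At \(c=x\) the numerator term \(4cN'(x^2)/N(x^2)\) is comparable to \(2x\,(\log(\cdots))'\) of the denominator, because \(N(x^2)^4\) and \(E\cdot D\) have matching leading asymptotics both as \(x\to0\) and as \(x\to1\). I will therefore set \(g(c,x)=(\log f)'\), show \(g(x,x)\le0\) (nearly zero), and bound \(\partial_c g\) from below in order to write \(g\le (c-x)A\).

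\textbf{Avoiding transcendental functions.} Rather than handle \(\log\) and \(\operatorname{Li}_2\) exactly, I will use truncated-series sandwiches for the ratios \(tN'(t)/N(t)\) and \(sD'(s)/D(s)\). Each ratio is a weighted average of \(n\) under the weights \(\frac{n^2}{n+1}t^n\). Comparing these with the weights \(n\,t^n\) and \(n^2 t^n\), whose sums are rational, should give rational upper and lower bounds of the shape \(\frac{1}{1-cx}\), \(\frac{1}{1+cx}\), \(\frac{1}{1\pm x^2}\). These are exactly the building blocks that appear in \(A_1\) and \(A_2\). Concretely, I aim for an upper bound on \(4tN'/N\) and a lower bound on \(2sD'/D\) such that, after clearing the positive denominators \((1-x^2)(1-cx)(1+x^2)(1+cx)(1+x^4)(1+c^2x^2)\), the claim \(g-(c-x)A_i<0\) becomes a polynomial inequality \(P_i(c,x)>0\) on the stated region.

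\textbf{Case split.} The case split mirrors where the crude bounds lose accuracy. When both \(c\) and \(cx\) are small (\(c<0.27\), \(x<0.27^2/c\)), the low-order expansion dominates and \(A_1\) suffices. Elsewhere the extra \(1+x^4\) and \(1+c^2x^2\) terms in \(A_2\) are needed to control the region near \(x\to1\). For each case I verify \(P_i>0\): I fix rational grid slices, use Sturm sequences in one variable with the other treated via monotone bounds, and check the boundary edges (\(c=x\), \(x\to1\), \(c=0.005\), and the curve \(cx=0.27^2\)) separately. Positivity of \(A_1\) and \(A_2\) is immediate for \(A_1\). For \(A_2\), only the middle term can be negative, and its absolute value is at most \(4\), which the constant \(5\) dominates. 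Once \(g<(c-x)A_i<0\) holds on \((c,1)\), \(f\) is strictly decreasing on \([c,1)\).

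\textbf{Main obstacle.} The hardest step is making the series ratio bounds tight enough near the diagonal \(x=c\), where \(g\) vanishes to first order, and near \(x\to1\), where the singularities of order \((1-x^2)^{-1}\) must cancel precisely. If the rational sandwiches are too loose there, I would first try keeping the exact \(\log(1-cx)\) term and bounding only \(\operatorname{Li}_2\). The resulting two-variable positivity check, certified by Sturm's theorem on a two-dimensional region, is where the real work lies.
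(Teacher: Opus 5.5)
There is a genuine gap. Your plan gives no working mechanism for the stated bound $(c-x)A_i$, and the bounds you propose are too weak to prove even the diagonal case.

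\textbf{The $\partial_c g$ route and the sandwiches fail.} Write $h(t)=4tN'(t)/N(t)$ and $\tilde\Phi(s)=2+\frac{2s}{1+s}+\frac{6s}{1-s}+\frac{2sD'(s)}{D(s)}$. Then $x\,g(c,x)=h(cx)-\tilde\Phi(x^2)$ and $\partial_c g=h'(cx)$. From the first two series coefficients, $h(t)=4+\frac{32}{3}t+O(t^2)$ and $\tilde\Phi(s)=4+\frac{104}{9}s+O(s^2)$.
\begin{itemize}
\item $g(x,x)=-\frac{8}{9}x+O(x^3)$ is not ``nearly zero''; it is of the same order as $(c-x)A_1$. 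So $g$ does not vanish on the diagonal.
\item $\partial_c g\to\frac{32}{3}<11=A_1(0,0)$. Hence $g(x,x)\le 0$ plus a lower bound on $\partial_c g$ cannot give $g\le(c-x)A_1$ at, e.g., $c=0.01$, $x=0.02$. You would need a quantitative negative bound on the diagonal, which is exactly the hard part.
\item The sandwiches you describe (comparing weights with $s^n$, $nt^n$, $n^2t^n$) naturally give $\frac{1+t}{1-t}\le\frac{tN'}{N}\le\frac{1+4t+t^2}{1-t^2}$ and $\frac{sD'}{D}\ge\frac{1}{1-s}$. Near $0$ this means $h(t)\le 4+16t+O(t^2)$ and $\tilde\Phi(s)\ge 4+10s+O(s^2)$. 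The true margin on the diagonal is only $\frac{8}{9}s$, so these bounds cannot even prove $g(x,x)<0$.
\item Near $t\to1$, the upper bound $\frac{3}{1-t}$ for $tN'/N$, against the true $\frac{2}{1-t}$, destroys the cancellation of the $(1-x^2)^{-1}$ singularities that you correctly identify.
\item Falling back on a two-variable positivity check ``certified by Sturm's theorem on a two-dimensional region'' is not a proof plan. Sturm's theorem is one-dimensional, and grid slicing needs explicit continuity control that you do not supply.
\end{itemize}

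\textbf{The missing idea.} Since $c$ enters only through $t=cx$, no two-variable check is needed. It suffices to find one rational $R$ with $h(t)<R(t)$ on the range of $cx$ and $R(s)\le\tilde\Phi(s)$ on the range of $x^2$. Then $g(c,x)<\frac{R(cx)-R(x^2)}{x}=(c-x)\,\frac{R(cx)-R(x^2)}{cx-x^2}$, which for $R$ built from $\frac{t}{1-t}$, $\frac{t}{1+t}$, $t$, $\frac{t}{1+t^2}$ is an explicit $(c-x)A$.
\begin{itemize}
\item The paper takes $R_1(t)=4+\frac{7t}{1-t}+\frac{7t}{2(1+t)}+\frac{t}{2}$ and $R_2(t)=4+\frac{8t}{1-t}+\frac{2t}{1+t}+5t-\frac{4t}{1+t^2}$. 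These reproduce $A_1$ and $A_2$ exactly.
\item It proves the two one-variable inequalities on the diagonal $x=c$, keeping $\log$ and $\operatorname{Li}_2$ exact and using truncated-series bounds, derivative arguments and one-variable Sturm sequences.
\item It then moves the $N$-bound off the diagonal by replacing $c$ with $\sqrt{cx}$.
\end{itemize}
This also explains the case split, which you attribute to $x\to1$. $R_1\le\tilde\Phi$ holds on all of $(0.005^2,1)$, but $h<R_1$ holds only for $t<0.27^2$; at $t=0.81$, $h\approx40.5>R_1\approx35.8$. So the split is on $cx$, and in the first case $x$ may be close to $1$.

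One part of your idea does survive. $tN'/N$ is the mean of $n$ under positive weights, so its derivative is a variance divided by $t$. Hence $\partial_c g=h'(cx)>0$ and $g(c,x)<g(x,x)$. The diagonal inequality alone therefore already gives the monotonicity of $f$, though not the explicit bounds with $A_1$ and $A_2$.
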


\begin{proof}
Let 
$$S_1(x) = \sum_{n=1}^{\infty}\frac{n^{2}}{n+1}c^{n}x^{n},\quad S_2(x) = \sum_{n=1}^{\infty}\frac{n^{2}}{(n+1)^{2}}x^{2n}.$$
Then we obtain 
$$f(x) = \frac{(1-x^2)^{3}S_1^{4}(x)}{x^{2}(1+x^{2})S_2(x)}.
$$
It suffices to show that for $c\in(0.005, 1)$,
\begin{equation}
    \left(\log f(x)\right)^{'} = 4\frac{S_1^{'}(x)}{S_1(x)}-\frac{S_2^{'}(x)}{S_2(x)}-\frac{6x}{1-x^2}-\frac{2}{x}-\frac{2x}{1+x^2}<0, \quad c<x<1.
    \label{goal}
\end{equation}
By direct computation,
$$S_1(x) = \sum_{n=1}^{\infty}\frac{n^{2}}{n+1}c^{n}x^{n} = \sum_{n=1}^{\infty}\left(n-1+\frac{1}{n+1}\right)c^{n}x^{n} = \frac{c^2 x^2}{(1-cx)^2}-\frac{\log(1-cx)}{cx}-1,
$$
$$S_1^{'}(x) = \frac{2c^2 x}{(1-cx)^{3}}+\frac{1}{x(1-cx)}+\frac{\log(1-cx)}{cx^2},
$$
$$S_2(x) = \sum_{n=1}^{\infty}x^{2n}-2\sum_{n=1}^{\infty}\frac{x^{2n}}{n+1}+\sum_{n=1}^{\infty}\frac{x^{2n}}{(n+1)^2}=\frac{1}{1-x^2}+\frac{2\log(1-x^2)}{x^2}+\frac{\text{Li}_2(x^2)}{x^2},
$$
$$S_2^{'}(x) = \frac{2x}{(1-x^2)^2}-\frac{4}{x(1-x^2)}-\frac{6\log(1-x^2)}{x^3}-\frac{2\text{Li}_2(x^2)}{x^3}.
$$
To achieve our goal, we first demonstrate (\ref{goal}) holds for $x=c$ and $c\in(0.005, 0.27)$. we prove the following inequalities (\ref{S_1_log_ine})(\ref{S_1_poly_ine})(\ref{S_2_ine}):
\begin{equation}
    \frac{\frac{1}{c(1-c^2)}+\frac{\log(1-c^2)}{c^3}}{\frac{c^4}{(1-c^2)^{2}}-\frac{\log(1-c^2)}{c^2}-1}<-\frac{7c}{8(1-c^2)}-\frac{3c}{8(1+c^2)}+\frac{1}{c}, \quad c\in(0.005, 0.27).\label{S_1_log_ine}
\end{equation}
\begin{equation}
    \frac{\frac{2c^3}{(1-c^2)^3}}{\frac{c^4}{(1-c^2)^{2}}-\frac{\log(1-c^2)}{c^2}-1}<\frac{21c}{8(1-c^2)}+\frac{5c}{4(1+c^2)}+\frac{1}{8}c, \quad c\in(0.005, 0.27).\label{S_1_poly_ine}
\end{equation}
\begin{equation}
    \frac{\frac{2c}{(1-c^2)^2}-\frac{4}{c(1-c^2)}-\frac{6\log(1-c^2)}{c^3}-\frac{2\text{Li}_2(c^2)}{c^3}}{\frac{1}{1-c^2}+\frac{2\log(1-c^2)}{c^2}+\frac{\text{Li}_2(c^2)}{c^2}}>\frac{2}{c}+\frac{c}{1-c^2}+\frac{3c}{2(1+c^2)}+\frac{c}{2}, \quad c\in(0.005, 1).\label{S_2_ine}
\end{equation}
To prove (\ref{S_1_log_ine}), note that (\ref{S_1_log_ine}) is equivalent to the following
\begin{equation}
    (-10c^8+15c^6+8c^4-21c^2+8)\log(1 - c^2)+ (  16c^8 - 17c^4+8c^2)<0
\end{equation}
Let $t = c^2$. It suffices to show that
\begin{equation}
    \log(1-t)+\frac{16t^4 - 17t^2+8t}{-10t^4+15t^3+8t^2-21t+8}<0,\quad t\in(2.5\times10^{-5}, 0.0729).\label{S_1_log_ine_1}
\end{equation}
By direct computation, the  inequality (\ref{S_1_log_ine_1}) holds for $t = 2.5\times10^{-5}$ and $t=0.0729$. Let $L_1(t)$ denote the left-hand side of (\ref{S_1_log_ine_1}). Differentiating $L_{1}(t)$ yields:
$$L_1^{'}(t)=\frac{t^2(100t^4+140t^3-79t^2-67t+4)}{(t - 1)(- 10t^3 + 5t^2 + 13t - 8)^2}
$$
Note that the denominator of $L_1^{'}(t)$ is strictly negative for $t\in(2.5\times10^{-5}, 0.0729)$. For the numerator, the part $t^2>0$, and the part $100t^4+140t^3-79t^2-67t+4$ is positive for $t=2.5\times10^{-5}$, and negative for $t=0.0729$. Additionally, the part $100t^4+140t^3-79t^2-67t+4$ is strictly decreasing on the interval $(2.5\times10^{-5}, 0.0729)$. Therefore, we conclude that $L_1(t)$ first strictly decreases from $2.5\times10^{-5}$ to some point in $(2.5\times10^{-5}, 0.0729)$, and then increases up to 0.0729. Thus, we obtain the desired $L_1(t)<0$, which proves (\ref{S_1_log_ine_1}), and consequently (\ref{S_1_log_ine}).

To prove (\ref{S_1_poly_ine}), we note that it is equivalent to the following inequality for $c\in(0.005, 0.27)$
$$(c^8-13c^6-9c^4+53c^2-32)\log(1-c^2)+(-32c^2+37c^4+7c^6-2c^8)>0.
$$
Let $t = c^2$. It suffices to show that
\begin{equation}
\log(1-t)<\frac{32t-37t^2-7t^3+2t^4}{t^4 -13t^3-9t^2+53t-32},\quad t\in(2.5\times10^{-5}, 0.0729).\label{S_1_poly_subine}    
\end{equation}
Note that $\log(1-t)<-t-t^2/2$. Thus, for $t\in(2.5\times10^{-5}, 0.0729)$, we have:
$$(-t-\frac{t^2}{2})(t^4 -13t^3-9t^2+53t-32)-(32t-37t^2-7t^3+2t^4) = -\frac{1}{2}t^3(t^3+2t^2-18t-5)>0.
$$
Therefore, the desired inequality (\ref{S_1_poly_subine}) holds, which completes the proof of(\ref{S_1_poly_ine}).

To prove (\ref{S_2_ine}), we note that it is equivalent to the following for $c\in(0.005, 1)$:
$$(20-8c^2-34c^4+20c^6 +2c^8)\log(1-c^2) + (8-2c^2 - 15c^4+8c^6+c^8)\text{Li}_2(c^2)<c^8+17c^6-2c^4-12c^2.
$$
Let $t = c^2$. Note that
$$8-2t - 15t^2+8t^3+t^4 = (t-1)(t^3 + 9t^2 - 6t - 8)>0,\quad t\in(2.5\times 10^{-5}, 1),
$$
and
$$20-8t-34t^2+20t^3 +2t^4=2(t-1)(t^3 + 11t^2 - 6t - 10)>0,\quad t\in(2.5\times 10^{-5}, 1).
$$
We have the following:
$$\text{Li}_{2}(t) = \sum_{k=1}^{\infty}\frac{t^k}{k^2}\leq t+\frac{t^2}{4}+\sum_{k=3}^{\infty}\frac{t^k}{3^2}=t+\frac{t^2}{4}+\frac{t^3}{9(1-t)},\quad \log(1-t)\leq -t-\frac{t^2}{2}-\frac{t^3}{3}-\frac{t^4}{4}.
$$
Therefore, to prove (\ref{S_2_ine}), it suffices to show for $t\in(2.5\times 10^{-5}, 1)$:
$$(20-8t-34t^2+20t^3 +2t^4)(-t-\frac{t^2}{2}-\frac{t^3}{3}-\frac{t^4}{4}) + (8-2t - 15t^2+8t^3+t^4)(t+\frac{t^2}{4}+\frac{t^3}{9(1-t)})<t^4+17t^3-2t^2-12t.
$$
This is equivalent to:
\begin{equation}
    \frac{- 18t^8 - 204t^7 + 35t^6 + 120t^5 - 51t^4 - 10t^3}{36}<0.\label{ggggg}
\end{equation}
Note that:
$$- 18t^8 - 204t^7 + 35t^6 + 120t^5 - 51t^4 - 10t^3 = -t^3(18t^5 + 204t^4 - 35t^3 - 120t^2 + 51t + 10).
$$
Let
\[
U(t)=18t^5+204t^4-35t^3-120t^2+51t+10.
\]
By Sturm's theorem, \(U\) has no zero in \((2.5\times10^{-5},1)\).
Since \(U(2.5\times10^{-5})>0\), it follows that \(U(t)>0\) on this
interval. Hence \eqref{ggggg} holds. This verifies that (\ref{S_2_ine}) holds for $c\in(0.005, 1)$. At this stage, we have proved for $c\in(0.005, 0.27)$,
$$(\log f(x))^{'}\bigg|_{x=c}<0.
$$
Next, We prove that for $c\in(0.27, 1)$, the following also holds:
$$(\log f(x))^{'}\bigg|_{x=c}<0.$$
We prove the following inequalities (\ref{S_1_ine_2})(\ref{S_2_ine_2}):
\begin{equation}
    \frac{\frac{1}{c(1-c^2)}+\frac{\log(1-c^2)}{c^3}+\frac{2c^3}{(1-c^2)^3}}{\frac{c^4}{(1-c^2)^{2}}-\frac{\log(1-c^2)}{c^2}-1}<\frac{2c}{1-c^2}+\frac{c}{2(1+c^2)}+\frac{1}{c}+\frac{5}{4}c-\frac{c}{1+c^4}, \quad c\in(0.27, 1).\label{S_1_ine_2}
\end{equation}
\begin{equation}
    \frac{\frac{2c}{(1-c^2)^2}-\frac{4}{c(1-c^2)}-\frac{6\log(1-c^2)}{c^3}-\frac{2\text{Li}_2(c^2)}{c^3}}{\frac{1}{1-c^2}+\frac{2\log(1-c^2)}{c^2}+\frac{\text{Li}_2(c^2)}{c^2}}>\frac{2}{c}+\frac{2c}{1-c^2}-\frac{4c}{1+c^4}+5c, \quad c\in( 0.27, 1).\label{S_2_ine_2}
\end{equation}
To prove (\ref{S_1_ine_2}), we note that it is equivalent to the following for $c\in( 0.27, 1)$,
\begin{equation}
    \frac{(-8-11c^2-6c^4-14c^6+2c^8+5c^{10})\log(1-c^2)+(- 8c^2 - 15c^4 - 14c^6 - 23c^8 - 10c^{10})}{4c^3(c^2 - 1)(c^2 + 1)(c^4 + 1)}<0.
\end{equation}
Note that for $c\in( 0.27, 1)$,
$$-8-11c^2-6c^4-14c^6+2c^8+5c^{10}<0.
$$
Therefore, it suffices to show that for $c\in( 0.27, 1)$,
\begin{equation}
        \log(1-c^2)+\frac{- 8c^2 - 15c^4 - 14c^6 - 23c^8 - 10c^{10}}{-8-11c^2-6c^4-14c^6+2c^8+5c^{10}}<0.
    \label{disgusting}
\end{equation}
Let $t=c^2$. and let $T_{1}(t)$ denote the left-hand side of (\ref{disgusting}). Computing the derivative of $T_{1}$ yields:
$$T_1^{'}(t) = \frac{t^2(t+1)(25t^7 + 90t^6 + 99t^5 + 120t^4 + 123t^3 + 66t^2 - 15t + 4)}{(t - 1)(- 5t^5 - 2t^4 + 14t^3 + 6t^2 + 11t + 8)^2
}<0.
$$
Therefore, the left-hand side of (\ref{disgusting}) is decreasing on $(0.27, 1)$. Since
(\ref{disgusting}) holds when $c=0.27$, the inequality holds for $c\in( 0.27, 1)$. Hence, (\ref{S_1_ine_2}) is true.

To prove (\ref{S_2_ine_2}), we note that it is equivalent to the following for $c\in(0.27,1)$:
\begin{equation}
\begin{aligned}
\frac{(-2+10c^2-12c^4+20c^6-26c^8+10c^{10})\log(1-c^2)}{c^3(c^2 - 1)^2(c^4 + 1)}\\
    +\frac{(3c^2-4c^4+8c^6-12c^8+5c^{10})\text{Li}_{2}(c^2)- 2c^2 + 7c^4 - 3c^6 + 11c^8 - 5c^{10}}{c^3(c^2 - 1)^2(c^4 + 1)}>0.
\end{aligned}
\end{equation}
By letting $t=c^2$, it suffices to show that for $t\in( 0.0729, 1)$,
$$(-2+10t-12t^2+20t^3-26t^4+10t^{5})\log(1-t)+(3t-4t^2+8t^3-12t^4+5t^{5})\text{Li}_{2}(t)- 2t + 7t^2 - 3t^3 + 11t^4 - 5t^{5}>0.
$$
Note that we have the following:
$$3t-4t^2+8t^3-12t^4+5t^{5} = t(t-1)(5t^3-7t^2+t-3)>0,\quad t\in( 0.0729, 1),
$$
and
$$\text{Li}_{2}(t) = \sum_{k=1}^{\infty}\frac{t^k}{k^2}\geq t.
$$
We only need to show that for $t\in( 0.0729, 1)$,
\begin{equation}
    (-2+10t-12t^2+20t^3-26t^4+10t^{5})\log(1-t)+(3t-4t^2+8t^3-12t^4+5t^{5})t- 2t + 7t^2 - 3t^3 + 11t^4 - 5t^{5}>0,
\end{equation}
which is equivalent to:
\begin{equation}
    2(t-1)(5t^4 - 8t^3 + 2t^2 - 4t + 1)\log(1-t)+t(5t^5 - 17t^4 + 19t^3 - 7t^2 + 10t - 2)>0,\quad t\in( 0.0729, 1).\label{stage2_last}
\end{equation}
Let $q(t)=5t^4-8t^3+2t^2-4t+1$. By Sturm's theorem,
the number of sign variations of the Sturm sequence of $q$ is $3$ at
$t=0.0729$, $2$ at $t=0.255$, and $2$ at $t=1$. Hence $q$ has exactly
one zero in $(0.0729,0.255)$ and no zero in $[0.255,1)$. Since
$q(0.0729)>0$ and $q(0.255)<0$, the condition $q(t)>0$ implies
$0.0729<t<0.255$. Therefore, when $q(t)>0$, we have
$$2(t-1)(5t^4 - 8t^3 + 2t^2 - 4t + 1)<0,\quad\log(1-t)<-t.
$$
Therefore, we have, for $5t^4 - 8t^3 + 2t^2 - 4t + 1>0$,
\begin{equation}
    \begin{aligned}
        2(t-1)(5t^4 - 8t^3 + 2t^2 - 4t + 1)\log(1-t)&+t(5t^5 - 17t^4 + 19t^3 - 7t^2 + 10t - 2)\\
        >2(t-1)(5t^4 - 8t^3 + 2t^2 - 4t + 1)(-t)+t(5t^5 - 17t^4 + 19t^3 -& 7t^2 + 10t - 2)=-t^3(5t^3 - 9t^2 + t - 5)>0.
    \end{aligned}
\end{equation}
Moreover, when $5t^4 - 8t^3 + 2t^2 - 4t + 1\leq0$, we have the following:
$$2(5t^4 - 8t^3 + 2t^2 - 4t + 1)<0,\quad\text{and}\quad(t-1)\log(1-t)=t-\sum_{k=2}^{\infty}\frac{t^k}{k(k-1)}<t-\frac{t^2}{2}.
$$
Therefore, we have, for $5t^4 - 8t^3 + 2t^2 - 4t + 1\leq0$,
\begin{equation}
    \begin{aligned}
        2(t-1)(5t^4 - 8t^3 + 2t^2 - 4t + 1)\log(1-t)&+t(5t^5 - 17t^4 + 19t^3 - 7t^2 + 10t - 2)\\
        >(5t^4 - 8t^3 + 2t^2 - 4t + 1)(2t-t^2)+t(5t^5 &- 17t^4 + 19t^3 - 7t^2 + 10t - 2)=t^2(t^2+1)(t+1)>0.
    \end{aligned}
\end{equation}
Therefore, (\ref{stage2_last}) is proved, which implies that (\ref{S_2_ine_2}) is true. Combined with (\ref{S_1_ine_2}), we have successfully proved that for $c\in(0.27, 1)$, we also have
$$(\log f(x))^{'}\bigg|_{x=c}<0.$$
At this stage, we have established that
$$(\log f(x))^{'}\bigg|_{x=c}<0
$$
holds for any $c\in(0.005,1)$.

Next, we show that for $c\in(0.005,1)$, 
$$(\log f(x))^{'}<0\quad\text{for all}\quad x>c.
$$
To this end, it suffices to show that:
\begin{equation}
    \frac{S_2^{'}(x)}{S_2(x)}>4\frac{S_1^{'}(x)}{S_1(x)}-\left(\frac{6x}{1-x^2}+\frac{2}{x}+\frac{2x}{1+x^2}\right).
\end{equation}
We show the following two inequalities (\ref{interval1_extend})(\ref{interval2_extend}).

For $0.005<c<0.27$ and $c<x<\min\{\frac{0.27^2}{c},1\}$, by (\ref{S_2_ine}), we only need to show that:
\begin{equation}
    \frac{2}{x}+\frac{x}{1-x^2}+\frac{3x}{2(1+x^2)}+\frac{x}{2}>4\frac{S_1^{'}(x)}{S_1(x)}-\left(\frac{6x}{1-x^2}+\frac{2}{x}+\frac{2x}{1+x^2}\right). \label{interval1_extend}
\end{equation}
To show (\ref{interval1_extend}), since $\sqrt{cx}\in(0.005, 0.27)$, we use (\ref{S_1_log_ine})(\ref{S_1_poly_ine}) to get
\begin{equation}
    4\frac{\frac{1}{x(1-cx)}+\frac{\log(1-cx)}{cx^2}+\frac{2c^2x}{(1-cx)^3}}{\frac{c^2x^2}{(1-cx)^{2}}-\frac{\log(1-cx)}{cx}-1}\sqrt{\frac{x}{c}}<\frac{7\sqrt{cx}}{1-cx}+\frac{7\sqrt{cx}}{2(1+cx)}+\frac{4}{\sqrt{cx}}+\frac{1}{2}\sqrt{cx}.
\end{equation}
Therefore, we have:
\begin{equation}
    4\frac{S_1^{'}(x)}{S_1(x)}<\sqrt{\frac{c}{x}}\left(\frac{7\sqrt{cx}}{1-cx}+\frac{7\sqrt{cx}}{2(1+cx)}+\frac{4}{\sqrt{cx}}+\frac{1}{2}\sqrt{cx}\right) = \frac{7c}{1-cx}+\frac{7c}{2(1+cx)}+\frac{4}{x}+\frac{1}{2}c.
\label{interval1_extend_mid}
\end{equation}
Comparing (\ref{interval1_extend_mid}) and (\ref{interval1_extend}),
it suffices to show that for $0.005<c<0.27$ and $c<x<\min\{\frac{0.27^2}{c},1\}$,
$$\frac{2}{x}+\frac{x}{1-x^2}+\frac{3x}{2(1+x^2)}+\frac{x}{2}+\frac{6x}{1-x^2}+\frac{2}{x}+\frac{2x}{1+x^2}>\frac{7c}{1-cx}+\frac{7c}{2(1+cx)}+\frac{4}{x}+\frac{1}{2}c.
$$
This is equivalent to
$$\frac{7(x-c)}{(1-x^2)(1-cx)}+\frac{7}{2}\frac{x-c}{(1+x^2)(1+cx)}+\frac{1}{2}(x-c)>0,
$$
which is clearly true. Therefore, (\ref{interval1_extend}) is proved for $0.005<c<0.27$ and $c<x<\min\{\frac{0.27^2}{c},1\}$.

For $0.27<c<x<1$, by (\ref{S_2_ine_2}), we only need to show that:
\begin{equation}
    \frac{2}{x}+\frac{2x}{1-x^2}-\frac{4x}{1+x^4}+5x>4\frac{S_1^{'}(x)}{S_1(x)}-\left(\frac{6x}{1-x^2}+\frac{2}{x}+\frac{2x}{1+x^2}\right). \label{interval2_extend}
\end{equation}
To show (\ref{interval2_extend}), since $\sqrt{cx}\in(0.27, 1)$, we use (\ref{S_1_ine_2}) to get:
\begin{equation}
        4\frac{\frac{1}{x(1-cx)}+\frac{\log(1-cx)}{cx^2}+\frac{2c^2x}{(1-cx)^3}}{\frac{c^2x^2}{(1-cx)^{2}}-\frac{\log(1-cx)}{cx}-1}\sqrt{\frac{x}{c}}<\frac{8\sqrt{cx}}{1-cx}+\frac{2\sqrt{cx}}{1+cx}+\frac{4}{\sqrt{cx}}+5\sqrt{cx}-\frac{4\sqrt{cx}}{1+c^2x^2}.
\end{equation}
Therefore, we have:
\begin{equation}
    4\frac{S_1^{'}(x)}{S_1(x)}<\sqrt{\frac{c}{x}}\left(\frac{8\sqrt{cx}}{1-cx}+\frac{2\sqrt{cx}}{1+cx}+\frac{4}{\sqrt{cx}}+5\sqrt{cx}-\frac{4\sqrt{cx}}{1+c^2x^2}\right) = \frac{8c}{1-cx}+\frac{2c}{1+cx}+\frac{4}{x}+5c-\frac{4c}{1+c^2x^2}.
\label{interval2_extend_mid}
\end{equation}
Comparing (\ref{interval2_extend_mid}) and (\ref{interval2_extend}),
it suffices to show that for $0.27<c<x<1$,
$$\frac{2}{x}+\frac{2x}{1-x^2}-\frac{4x}{1+x^4}+5x+\frac{6x}{1-x^2}+\frac{2}{x}+\frac{2x}{1+x^2}>\frac{8c}{1-cx}+\frac{2c}{1+cx}+\frac{4}{x}+5c-\frac{4c}{1+c^2x^2}
$$
This is equivalent to
$$\frac{8(x-c)}{(1-cx)(1-x^2)}+\frac{4(cx^3-1)(x-c)}{(1+x^4)(1+c^2x^2)}+5(x-c)+\frac{2(x-c)}{(1+cx)(1+x^2)}>0,
$$
which is true since $\frac{4(cx^3-1)}{(1+x^4)(1+c^2x^2)}>-4$. Therefore, (\ref{interval2_extend}) is proved for $0.27<c<x<1$.

For $0.005<c<0.27$ and $\frac{0.27^2}{c}\leq x<1$, we can obtain the desired results using the same techniques as in the previous case where $0.27<c<x<1$.

Through our previous calculation process, we can get either
$$(\log f(x))^{'} <(c-x)\left(\frac{7}{(1-x^2)(1-cx)}+\frac{7}{2}\frac{1}{(1+x^2)(1+cx)}+\frac{1}{2}\right)<0,
$$
or
$$(\log f(x))^{'} <(c-x)\left(\frac{8}{(1-cx)(1-x^2)}+\frac{4(cx^3-1)}{(1+x^4)(1+c^2x^2)}+5+\frac{2}{(1+cx)(1+x^2)}\right)<0.
$$
Therefore, for $0.005<c<1$, $f$ is strictly decreasing on $[c,1)$.
This completes the proof of Theorem 1.

\end{proof}

\begin{theorem}
Let \(\beta=10/9\), \(\mu=20/19\), and \(0.005<c<1\), and let \(f\)
be the function defined in the preceding theorem. Define
\[
\begin{aligned}
B_\beta(c,x)&=\frac{11}{(1-x^2)(1-cx/\beta)}
+\frac{5}{(1+x^2)(1+cx/\beta)}-\frac{17}{4},\\
B_\mu(c,x)&=\frac{11}{(1-cx/\mu)(1-x^2)}
+\frac{8}{(1+cx/\mu)(1+x^2)}-7.
\end{aligned}
\]
Let
\[
I_\beta(c)=
\begin{cases}
(0.0045,0.9c), & 0.005<c<0.5,\\
(0.0045,0.48^2/(c\mu)], & 0.5\leq c<1,
\end{cases}
\qquad
I_\mu(c)=\left(0.48^2/(c\mu),0.95c\right).
\]
Then
\[
(\log f(x))'>
\begin{cases}
(c/\beta-x)B_\beta(c,x)>0, & x\in I_\beta(c),\\
(c/\mu-x)B_\mu(c,x)>0, & 0.5\leq c<1,\ x\in I_\mu(c).
\end{cases}
\]
Consequently, \(f\) is strictly increasing on \((0.0045,0.9c)\) for
\(0.005<c<0.5\), and on \((0.0045,0.95c)\) for \(0.5\leq c<1\).
\end{theorem}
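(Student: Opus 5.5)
The proof will mirror the proof of Theorem 1, with the inequalities reversed. Write
\[
(\log f(x))' = 4\frac{S_1'(x)}{S_1(x)}-\frac{S_2'(x)}{S_2(x)}-\frac{6x}{1-x^2}-\frac{2}{x}-\frac{2x}{1+x^2},
\]
with $S_1,S_2$ and their closed forms in $\log(1-\cdot)$ and $\operatorname{Li}_2$ exactly as before. We now need a \emph{lower} bound for $4S_1'/S_1$ and an \emph{upper} bound for $S_2'/S_2$.

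\textbf{Step 1: diagonal-type one-variable inequalities.} First prove one-variable inequalities at a single point $t$ (with $t=c^2$ or $t=x^2$):
\begin{itemize}
\item a lower bound for the diagonal ratio $S_1'/S_1$, of the form $\frac{1}{c}+\frac{a\,c}{1-c^2}+\frac{b\,c}{1+c^2}+\dots$;
\item an upper bound for $S_2'/S_2$ of the same shape.
\end{itemize}
Each bound is chosen so that the combined lower bound for $(\log f)'$ telescopes into $(c/\beta - x)B_\beta$, respectively $(c/\mu-x)B_\mu$. Since $S_2$ depends only on $x$, its bound can be stated directly in $x$. For $S_1$, which depends only on $cx$, the trick from Theorem 1 reappears:
\[
S_1'(x)/S_1(x)=\sqrt{c/x}\,\bigl(\text{diagonal ratio at }\sqrt{cx}\bigr).
\]
So a diagonal bound at $\sqrt{cx}$ transfers to the off-diagonal point. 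The rescaling of $c$ to $c/\beta$ or $c/\mu$ is what buys the slack. Inside the ratio we work with $cx/\beta$, which guarantees a strictly positive margin. The factors $1-cx/\beta$ and $1+cx/\beta$ in $B_\beta$ are the artifacts of this substitution.

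\textbf{Step 2: reduction to polynomial inequalities.} Each one-variable inequality is cleared of denominators and, with $t$ the squared variable, put in the form
\[
P(t)\log(1-t)+Q(t)\operatorname{Li}_2(t)+R(t)\gtrless 0 .
\]
We then choose the direction of truncation to match the sign of the polynomial coefficients:
\begin{itemize}
\item for $\log(1-t)$, use $\log(1-t)\le -t-\frac{t^2}{2}-\cdots$, or a lower bound such as $\log(1-t)\ge -t/(1-t)$ or $(t-1)\log(1-t)\le t-\frac{t^2}{2}$;
\item for $\operatorname{Li}_2$, use the two-sided bounds $t\le \operatorname{Li}_2(t)\le t+\frac{t^2}{4}+\frac{t^3}{9(1-t)}$.
\end{itemize}
This reduces everything to polynomial sign conditions on explicit intervals. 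Those are certified either by Sturm's theorem or by the monotonicity argument used for $L_1$ and $T_1$: differentiate, show that the numerator of the derivative changes sign at most once, and check the endpoints. The case split $I_\beta$ versus $I_\mu$ corresponds to which range of $\sqrt{cx/\beta}$ (below or above roughly $0.48$) the bound is valid on, just as $0.27$ split Theorem 1. The lower cutoff $0.0045$ and the restriction $c>0.005$ keep the endpoint evaluations away from $t=0$, where the series expansions degenerate.

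\textbf{Step 3: assembly and positivity.} Substituting the bounds gives
\[
(\log f)'>(c/\beta-x)B_\beta(c,x) \quad\text{on } I_\beta(c),
\]
and the analogous bound with $\mu$. Since $x<0.9c=c/\beta$ (resp.\ $x<0.95c=c/\mu$), the first factor is positive. It remains to show $B_\beta>0$ and $B_\mu>0$ there. For $B_\beta$ this is immediate: $\frac{11}{(1-x^2)(1-cx/\beta)}\ge 11>\frac{17}{4}$. For $B_\mu$, $\frac{11}{(1-cx/\mu)(1-x^2)}\ge 11>7$.

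\textbf{Expected obstacle.} The main obstacle is Step 1. Unlike Theorem 1, the relevant regime includes small $x$ (down to $0.0045$) and $x$ close to $c$. The margin $c/\beta-x$ is small there, so the polynomial inequalities after truncation will be tight near the interval endpoints. My first attempt would be low-order truncations with Sturm certification. If a check fails near an endpoint, I would add one more series term, or split the $t$-interval further at a point found numerically.
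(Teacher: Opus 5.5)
Your plan is correct and is essentially the paper's proof. The paper proves one-variable bounds for $S_1'/S_1$ in the variable $w=cx$, with right-hand sides in $w/\beta$ (resp.\ $w/\mu$), so these are inequalities on the lines $c=\beta x$, $c=\mu x$ rather than on the diagonal; it proves bounds for $S_2'/S_2$ in $x$, with coefficients forced by the telescoping into $(c/\beta-x)B_\beta$ and $(c/\mu-x)B_\mu$; it splits at $cx=0.48^2/\mu$; and it certifies everything by truncated $\log$/$\mathrm{Li}_2$ series, Sturm sequences and a derivative-monotonicity argument. One practical note: the bounds you list, $\mathrm{Li}_2(t)\ge t$ and $(t-1)\log(1-t)\le t-t^2/2$, are too crude here, because after clearing denominators the $\mu$-case $S_2$ inequality has margin only $t^3/9$ near $t=0$, and the paper indeed truncates to six terms of $\mathrm{Li}_2$ and seven of $(t-1)\log(1-t)$.
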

\begin{proof}
We use the same notation as in the proof of Theorem 1. It suffices to show that for $0.005<c<0.5$:
\begin{equation}
    (\log f(x))^{'} = 4\frac{S_1^{'}(x)}{S_1(x)}-\frac{S_2^{'}(x)}{S_2(x)}-\frac{6x}{1-x^2}-\frac{2}{x}-\frac{2x}{1+x^2}>0,\quad 0.0045<x\leq 0.9c,\label{goal1}
\end{equation}
and for $0.5\leq c<1$:
\begin{equation}
    (\log f(x))^{'} = 4\frac{S_1^{'}(x)}{S_1(x)}-\frac{S_2^{'}(x)}{S_2(x)}-\frac{6x}{1-x^2}-\frac{2}{x}-\frac{2x}{1+x^2}>0,\quad 0.0045<x\leq 0.95c.\label{goal2}
\end{equation}
To achieve this, we first demonstrate that (\ref{goal1}) holds for $x=0.9c$ and $0.005<c<0.5$, and (\ref{goal2}) holds for $x=0.95c$ and $0.5\leq c<1$.

To prove that $(\log f(x))^{\prime}|_{x=0.95c}>0$ for $0.5\le c<1$, we prove the following inequalities (\ref{goal1_S1_ine})(\ref{goal1_S2_ine}).

Let $\mu = \frac{20}{19}$, then $c=\mu x$. We show that:
\begin{equation}
    \frac{\frac{1}{x(1-\mu x^2)}+\frac{\log(1-\mu x^2)}{\mu x^3}+\frac{2\mu^2x^3}{(1-\mu x^2)^3}}{\frac{\mu^2x^4}{(1-\mu x^2)^{2}}-\frac{\log(1-\mu x^2)}{\mu x^2}-1}>\frac{11x}{4(1-x^2)}+\frac{2x}{1+x^2}+\frac{1}{x}-\frac{7}{4}x, \quad x\in[0.48/\mu, 1/\mu).\label{goal1_S1_ine}
\end{equation}
\begin{equation}
    \frac{\frac{2x}{(1-x^2)^2}-\frac{4}{x(1-x^2)}-\frac{6\log(1-x^2)}{x^3}-\frac{2\text{Li}_2(x^2)}{x^3}}{\frac{1}{1-x^2}+\frac{2\log(1-x^2)}{x^2}+\frac{\text{Li}_2(x^2)}{x^2}}<\frac{2}{x}+\frac{5x}{1-x^2}+\frac{6x}{1+x^2}-7x, \quad x\in(0.005/\mu, 1/\mu).\label{goal1_S2_ine}
\end{equation}
To prove (\ref{goal1_S1_ine}), let $y = \mu x^2$. This is equivalent to proving the following inequality:
\begin{equation}
\frac{\frac{1}{1-y}+\frac{\log(1-y)}{y}+\frac{2y^2}{(1-y)^3}}{\frac{y^2}{(1-y)^{2}}-\frac{\log(1-y)}{y}-1}>\frac{11y/\mu}{4(1-y/\mu)}+\frac{2y/\mu}{1+y/\mu}+1-\frac{7}{4}y/\mu, \quad y\in[0.48^2/\mu, 1/\mu).\label{S_1y_ine}
\end{equation}
We find the common denominator on both sides of the inequality, and it suffices to prove:
\begin{equation}
\begin{aligned}
    \frac{(- 48013y^6 + 180139y^5 - 343539y^4 + 365913y^3 - 117700y^2 - 100800y + 64000)\log(1-y)}{80y(19y - 20)(19y + 20)(y - 1)^3}\\
    +\frac{64000y- 68800y^2 - 149700y^3 + 309833y^4 - 245119y^5 + 96026y^6}{80y(19y - 20)(19y + 20)(y - 1)^3}>0
\end{aligned}
\end{equation}
Note that:
\begin{align*}
- 48013y^6 + 180139y^5 - 343539y^4 + 365913y^3 - 117700y^2 - 100800y + 64000\\
= -(y-1)^3(48013y^3 - 36100y^2 + 91200y + 64000)>0.
\end{align*}
Denoting $R(y) = - 48013y^6 + 180139y^5 - 343539y^4 + 365913y^3 - 117700y^2 - 100800y + 64000$, and $P(y) = 64000y- 68800y^2 - 149700y^3 + 309833y^4 - 245119y^5 + 96026y^6$,
we need to show that for $y\in[0.48^2/\mu, 1/\mu)$
$$\log(1-y)>-\frac{P(y)}{R(y)}.
$$
We first check that it is true for $y = 0.48^2/\mu=0.21888$. Then, taking derivatives for both sides, it suffices to show that
$$-\frac{1}{1-y}+\frac{P^{'}R-PR^{'}}{R^2}>0.
$$
A direct simplification gives
\[
-\frac{1}{1-y}+\frac{P'R-PR'}{R^2}
=
\frac{y^2 p(y)}{(1-y)^4(48013y^3-36100y^2+91200y+64000)^2},
\]
where
\[
\begin{aligned}
p(y)=&2305248169y^7-4853154040y^6+2209071271y^5
+13054323160y^4\\
&-7282525200y^3-6421088000y^2+4679680000y-563200000.
\end{aligned}
\]
By Sturm's theorem, the number of sign variations of the Sturm sequence
of $p$ is $3$ at $y=684/3125$ and also $3$ at $y=19/20$. Hence $p$
has no zero on $[684/3125,19/20]$. Since $p(684/3125)>0$, we have
$p(y)>0$ throughout this interval. Therefore, we have
$$-\frac{1}{1-y}+\frac{P^{'}R-PR^{'}}{R^2}>0.$$
Thus, (\ref{goal1_S1_ine}) is proved.

To prove (\ref{goal1_S2_ine}), it is enough to prove the following
inequality for $0<x<1$:
\begin{equation}
    \frac{(10-2x^2-20x^4+26x^6-14x^8)\log(1-x^2)+(4-9x^4+12x^6-7x^8)\text{Li}_2(x^2)+(6x^2 + 2x^4 - 9x^6 + 7x^8)}{x^3(x^2 - 1)^2(x^2 + 1)}>0.\label{whole_st2}
\end{equation}
Let $t=x^2$. Note that
$$10-2t-20t^2+26t^3-14t^4 = -2(t-1)(7t^3 - 6t^2 + 4t + 5)>0,
$$
$$4-9t^2+12t^3-7t^4 = -(t-1)(7t^3 - 5t^2 + 4t + 4)>0,
$$
$$(t-1)\log(1-t)=t-\sum_{k=2}^{\infty}\frac{t^k}{k(k-1)}<t-\frac{t^2}{2}-\frac{t^3}{6}-\frac{t^4}{12}-\frac{t^5}{20}-\frac{t^6}{30}-\frac{t^7}{42},
$$
$$\text{Li}_2(t)>t+\frac{t^2}{4}+\frac{t^3}{9}+\frac{t^4}{16}+\frac{t^5}{25}+\frac{t^6}{36}.
$$
Therefore, to prove (\ref{whole_st2}), it suffices to show:
\begin{align*}
    -2(7t^3 - 6t^2 + 4t + 5)(t-\frac{t^2}{2}-\frac{t^3}{6}-\frac{t^4}{12}-\frac{t^5}{20}-\frac{t^6}{30}-\frac{t^7}{42})\\
    +(4-9t^2+12t^3-7t^4)(t+\frac{t^2}{4}+\frac{t^3}{9}+\frac{t^4}{16}+\frac{t^5}{25}+\frac{t^6}{36})+6x^2 + 2x^4 - 9x^6 + 7x^8>0,
\end{align*}
which is equivalent to for $t\in(0.005^2/\mu^2, 1/\mu^2)$,
$$t^3(3500t^7 + 5904t^6 + 7131t^5 + 17228t^4 + 30205t^3 + 33432t^2 - 21000t + 2800
)>0.
$$
Let
\[
p(t)=3500t^7+5904t^6+7131t^5+17228t^4+30205t^3
+33432t^2-21000t+2800.
\]
By Sturm's theorem, the number of sign variations of the Sturm sequence
of $p$ is $3$ at $t=0$ and $3$ at $t=1$. Hence $p$ has no zero in
$(0,1)$. Since $p(0)=2800>0$, it follows that $p(t)>0$ for $0<t<1$. To this stage, we have proved (\ref{whole_st2}) is true, and (\ref{goal1_S2_ine}) follows. Therefore, for $0.5\leq c<1$,
$$(\log f(x))^{'}\bigg|_{x=0.95c}>0.
$$
Then we show that for $0.005<c<0.5$, 
$$(\log f(x))^{'}\bigg|_{x=0.9c}>0.
$$
Let $\beta = \frac{10}{9}$, then $c=\beta x$. Note that
\[
(0.005/\beta, 0.5/\beta)\subset(0.005/\beta, 0.48/\mu),
\]
since $0.5/\beta=0.45<0.48/\mu=0.456$. We show that:
\begin{equation}
    \frac{\frac{1}{x(1-\beta x^2)}+\frac{\log(1-\beta x^2)}{\beta x^3}+\frac{2\beta^2x^3}{(1-\beta x^2)^3}}{\frac{\beta^2x^4}{(1-\beta x^2)^{2}}-\frac{\log(1-\beta x^2)}{\beta x^2}-1}>\frac{11x}{4(1-x^2)}+\frac{5x}{4(1+x^2)}+\frac{1}{x}-\frac{17}{16}x, \quad x\in(0.005/\beta, 0.48/\mu).\label{goal2_S1_ine}
\end{equation}
\begin{equation}
    \frac{\frac{2x}{(1-x^2)^2}-\frac{4}{x(1-x^2)}-\frac{6\log(1-x^2)}{x^3}-\frac{2\text{Li}_2(x^2)}{x^3}}{\frac{1}{1-x^2}+\frac{2\log(1-x^2)}{x^2}+\frac{\text{Li}_2(x^2)}{x^2}}<\frac{2}{x}+\frac{5x}{1-x^2}+\frac{3x}{1+x^2}-\frac{17}{4}x, \quad x\in(0.005/\beta, 0.48/\mu).\label{goal2_S2_ine}
\end{equation}
To prove (\ref{goal2_S1_ine}), let $w = \beta x^2 = cx$. Then (\ref{goal2_S1_ine}) is equivalent to
\begin{equation}
    \frac{\frac{1}{1-w}+\frac{\log(1-w)}{w}+\frac{2w^2}{(1-w)^3}}{\frac{w^2}{(1-w)^{2}}-\frac{\log(1-w)}{w}-1}>\frac{11w/\beta}{4(1-w/\beta)}+\frac{5w/\beta}{4(1+w/\beta)}+1-\frac{17}{16}w/\beta, \quad w\in[0.005^2/\beta, 0.48^2\beta/\mu^2).
\end{equation}
We find the common denominator on both sides of the inequality, and it suffices to prove
\begin{equation}
\begin{aligned}
    \frac{(- 12393w^6 + 43659w^5 - 98919w^4 + 126733w^3 - 37380w^2 - 53700w + 32000)\log(1-w)}{160w(9w - 10)(9w + 10)(w - 1)^3}\\
    +\frac{32000w- 37700w^2 - 53380w^3 + 103473w^4 - 63099w^5 + 24786w^6}{160w(9w - 10)(9w + 10)(w - 1)^3}>0.
\end{aligned}
\end{equation}
Note that
\begin{align*}
    - 12393w^6 + 43659w^5 - 98919w^4 + 126733w^3 - 37380w^2 - 53700w + 32000\\ = -(w-1)^3(12393w^3 - 6480w^2 + 42300w + 32000)>0.
\end{align*}
$$(w-1)\log(1-w)<w-\frac{w^2}{2}-\frac{w^3}{6}.
$$
Therefore, it suffices to show that
\begin{align*}
    -(w-1)^2(12393w^3 - 6480w^2 + 42300w + 32000)(w-\frac{w^2}{2}-\frac{w^3}{6})\\
    +32000w- 37700w^2 - 53380w^3 + 103473w^4 - 63099w^5 + 24786w^6>0.
\end{align*}
This is equivalent to for $w\in[0.005^2/\beta, 0.48^2\beta/\mu^2) = (2.25\times 10^{-5}, 0.23104)$,
$$\frac{1}{6}w^3(12393w^5 + 5913w^4 + 48213w^3 - 47119w^2 + 15980w + 1100)>0,
$$
which holds clearly. Therefore, (\ref{goal2_S1_ine}) is true. Next, we prove (\ref{goal2_S2_ine}). We note that (\ref{goal2_S2_ine}) is equivalent to for $x\in(0.005/\beta, 0.48/\mu)$,
\begin{equation}
\begin{aligned}
\frac{(40-10x^2-54x^4+58x^6-34x^8)\log(1-x^2)+(16-x^2-23x^4+25x^6-17x^8)\text{Li}_2(x^2)}{4x^3(x^2 - 1)^2(x^2 + 1)}\\
\frac{+24x^2 + 7x^4 - 24x^6 + 17x^8}{4x^3(x^2 - 1)^2(x^2 + 1)}>0.    
\end{aligned}
 \label{goal2_S2_mid}
\end{equation}
Let $t=x^2$. Note that
$$40-10t-54t^2+58t^3-34t^4 = -2(t-1)(17t^3 - 12t^2+ 15t + 20)>0,
$$
$$16-t-23t^2+25t^3-17t^4 = -(t-1)(17t^3 - 8t^2 + 15t + 16)>0,
$$
$$(t-1)\log(1-t)<t-\frac{t^2}{2}-\frac{t^3}{6}-\frac{t^4}{12},\quad \text{Li}_2(t)>t+\frac{t^2}{4}+\frac{t^3}{9}+\frac{t^4}{16}.
$$
Therefore, it suffices to show that
\begin{align*}
    -2(17t^3 - 12t^2+ 15t + 20)(t-\frac{t^2}{2}-\frac{t^3}{6}-\frac{t^4}{12})+(16-t-23t^2+25t^3-17t^4)(t+\frac{t^2}{4}+\frac{t^3}{9}+\frac{t^4}{16})\\
    +(24t + 7t^2 - 24t^3 + 17t^4)>0.
\end{align*}
This is equivalent to show for $t\in(0.005^2/\beta^2, 0.48^2/\mu^2)=(2.025\times 10^{-5}, 0.207936)$,
$$-\frac{1}{144}t^3(153t^5 - 361t^4 - 109t^3 - 307t^2 + 76t - 28)>0,
$$
which holds clearly. Therefore, (\ref{goal2_S2_mid}) is true and we have proved (\ref{goal2_S2_ine}). To this stage, we have shown that for $c\in(0.005, 0.5)$,
$$(\log f(x))^{'}\bigg|_{x=0.9c} > 0.
$$
We now extend the endpoint inequalities to the full intervals. For
$0.5\leq c<1$, we split
\[
0.0045<x<0.95c
\]
into
\[
0.0045<x\leq \frac{0.48^2}{c\mu}
\quad\text{and}\quad
\frac{0.48^2}{c\mu}<x<\frac{c}{\mu}=0.95c.
\]
For $0.005<c<0.5$, we prove the result on
\[
0.0045<x<\frac{c}{\beta}=0.9c.
\]
Specifically, we show that
\begin{equation}
    \frac{S_2^{'}(x)}{S_2(x)}<4\frac{S_1^{'}(x)}{S_1(x)}-\left(\frac{6x}{1-x^2}+\frac{2}{x}+\frac{2x}{1+x^2}\right).
\end{equation}
We show the following two inequalities (\ref{goal2_interval1_extend})(\ref{goal2_interval2_extend}).

Let $\mu = 20/19$. For $0.5\leq c<1$ and $\frac{0.48^2}{c\mu}<x<c/\mu$, by (\ref{goal1_S2_ine}), we only need to show that
\begin{equation}
    \frac{2}{x}+\frac{5x}{1-x^2}+\frac{6x}{1+x^2}-7x<4\frac{S_1^{'}(x)}{S_1(x)}-\left(\frac{6x}{1-x^2}+\frac{2}{x}+\frac{2x}{1+x^2}\right). \label{goal2_interval1_extend}
\end{equation}
To show (\ref{goal2_interval1_extend}), since $cx\in(0.48^2/\mu, 1/\mu)$, we use (\ref{S_1y_ine}) to get
\begin{equation}
    \frac{\frac{1}{1-cx}+\frac{\log(1-cx)}{cx}+\frac{2c^2x^2}{(1-cx)^3}}{\frac{c^2x^2}{(1-cx)^{2}}-\frac{\log(1-cx)}{cx}-1}>\frac{11cx/\mu}{4(1-cx/\mu)}+\frac{2cx/\mu}{1+cx/\mu}+1-\frac{7}{4}cx/\mu.
\end{equation}
Therefore, we have
\begin{equation}
    4\frac{S_1^{'}(x)}{S_1(x)}>\frac{4}{x}\left(\frac{11cx/\mu}{4(1-cx/\mu)}+\frac{2cx/\mu}{1+cx/\mu}+1-\frac{7}{4}cx/\mu\right) = \frac{11c/\mu}{1-cx/\mu}+\frac{8c/\mu}{1+cx/\mu}+\frac{4}{x}-7c/\mu.
\label{goal2_interval1_extend_mid}
\end{equation}
Comparing (\ref{goal2_interval1_extend_mid}) and (\ref{goal2_interval1_extend}),
We only need to show that for $0.5\leq c<1$ and $\frac{0.48^2}{c\mu}<x<c/\mu$,
$$\frac{2}{x}+\frac{5x}{1-x^2}+\frac{6x}{1+x^2}-7x+\frac{6x}{1-x^2}+\frac{2}{x}+\frac{2x}{1+x^2}<\frac{11c/\mu}{1-cx/\mu}+\frac{8c/\mu}{1+cx/\mu}+\frac{4}{x}-7c/\mu.
$$
This is equivalent to
$$\frac{11(x-c/\mu)}{(1-x^2)(1-cx/\mu)}+8\frac{x-c/\mu}{(1+x^2)(1+cx/\mu)}-7(x-c/\mu)<0,
$$
which is clearly true. Therefore, (\ref{goal2_interval1_extend}) is proved for $0.5\leq c<1$ and $\frac{0.48^2}{c\mu}<x<c/\mu$.\\

For $0.005<c<0.5$ and $0.0045<x<0.9c$, by (\ref{goal2_S2_ine}), we only need to show that
\begin{equation}
    \frac{2}{x}+\frac{5x}{1-x^2}+\frac{3x}{1+x^2}-\frac{17}{4}x<4\frac{S_1^{'}(x)}{S_1(x)}-\left(\frac{6x}{1-x^2}+\frac{2}{x}+\frac{2x}{1+x^2}\right). \label{goal2_interval2_extend}
\end{equation}
Let $\beta = 10/9$. To show (\ref{goal2_interval2_extend}), since
\[
cx\in(0.005^2/\beta,0.5^2/\beta)
\subset (0.005^2/\beta,0.48^2\beta/\mu^2),
\]
we use (\ref{goal2_S1_ine}) to obtain
\begin{equation}
        \frac{\frac{1}{1-cx}+\frac{\log(1-cx)}{cx}+\frac{2c^2x^2}{(1-cx)^3}}{\frac{c^2x^2}{(1-cx)^{2}}-\frac{\log(1-cx)}{cx}-1}>\frac{11cx/\beta}{4(1-cx/\beta)}+\frac{5cx/\beta}{4(1+cx/\beta)}+1-\frac{17}{16}cx/\beta.
\end{equation}
Therefore, we have:
\begin{equation}
    4\frac{S_1^{'}(x)}{S_1(x)}>\frac{4}{x}\left(\frac{11cx/\beta}{4(1-cx/\beta)}+\frac{5cx/\beta}{4(1+cx/\beta)}+1-\frac{17}{16}cx/\beta\right) = \frac{11c/\beta}{1-cx/\beta}+\frac{5c/\beta}{1+cx/\beta}+\frac{4}{x}-\frac{17c/\beta}{4}.
\label{goal2_interval2_extend_mid}
\end{equation}
Comparing (\ref{goal2_interval2_extend_mid}) and (\ref{goal2_interval2_extend}),
We only need to show that for $0.005<c<0.5$ and $0.005/\beta<x<c/\beta$,
$$\frac{2}{x}+\frac{5x}{1-x^2}+\frac{3x}{1+x^2}-\frac{17}{4}x+\frac{6x}{1-x^2}+\frac{2}{x}+\frac{2x}{1+x^2}<\frac{11c/\beta}{1-cx/\beta}+\frac{5c/\beta}{1+cx/\beta}+\frac{4}{x}-\frac{17c/\beta}{4}.
$$
This is equivalent to
$$\frac{11(x-c/\beta)}{(1-cx/\beta)(1-x^2)}-\frac{17}{4}(x-c/\beta)+\frac{5(x-c/\beta)}{(1+cx/\beta)(1+x^2)}<0,
$$
which is clearly true. Therefore, (\ref{goal2_interval2_extend}) is proved for $0.005<c<0.5$ and $0.005/\beta<x<c/\beta$.

It remains to handle the lower subinterval for $0.5\leq c<1$. If
\[
0.0045<x\leq \frac{0.48^2}{c\mu},
\]
then
\[
x\leq \frac{0.48^2}{c\mu}\leq \frac{0.48^2}{0.5\mu}=0.43776
<0.45\leq \frac{c}{\beta}.
\]
Moreover,
\[
cx\leq \frac{0.48^2}{\mu}<\frac{0.48^2\beta}{\mu^2}.
\]
Therefore, the same argument used in the proof of
(\ref{goal2_interval2_extend}) applies and gives
\[
(\log f(x))^{'}
>
(c/\beta-x)\left(\frac{11}{(1-x^2)(1-cx/\beta)}
+\frac{5}{(1+x^2)(1+cx/\beta)}-\frac{17}{4}\right)>0.
\]

Combining the estimates above, for $0.005<c<0.5$ and
$0.0045<x<0.9c$, and also for $0.5\leq c<1$ and
$0.0045<x\leq \frac{0.48^2}{c\mu}$, we have
\[
(\log f(x))^{'}
>
(c/\beta-x)\left(\frac{11}{(1-x^2)(1-cx/\beta)}
+\frac{5}{(1+x^2)(1+cx/\beta)}-\frac{17}{4}\right)>0.
\]
For $0.5\leq c<1$ and
$\frac{0.48^2}{c\mu}<x<0.95c=c/\mu$, we have
\[
(\log f(x))^{'}
>
(c/\mu-x)\left(\frac{11}{(1-cx/\mu)(1-x^2)}
+\frac{8}{(1+cx/\mu)(1+x^2)}-7\right)>0.
\]
This completes the proof of Theorem 2.
\end{proof}


\renewcommand{\refname}{Bibliography}
\bibliographystyle{siamplain}
\bibliography{References}
\end{document}